\DeclarePairedDelimiter{\norm}{\lVert}{\rVert}
\newcommand{\wOm}{\widehat{\Omega}}
\newcommand{\Om}[2]{\Omega^{#1}_{#2}}
\newcommand{\Td}[1]{\mathcal{T}_{\delta}^{#1}}
\newcommand{\Odt}[1]{{\Om{#1}{\delta,h}}}
\newcommand{\T}{\mathcal T}
\newcommand{\TS}[1]{\mathcal{T}_{\pm}^{#1}}
\def\Fh{\mathcal{F}_h}
\def\eps{\varepsilon}
\def\G{\Gamma}
\def\Gp{\Gamma_{\!+}}
\def\GQ{\Gamma_{\!\cQ}}
\def\dist{\text{dist}}
\newcommand{\E}[1]{\mathcal{E}_{#1}}
\newcommand{\err}{\mathbb{E}}
\newcommand{\R}{\mathbb{R}}
\newcommand{\consist}{\mathcal{E}_C^n}
\newcommand{\interpol}{\mathcal{E}_I^n}
\newcommand{\cI}{\mathcal{I}}
\newcommand{\bn}{\mathbf{n}}
\newcommand{\bw}{\mathbf{w}}
\newcommand{\by}{\mathbf{y}}
\newcommand{\cQ}{\mathcal{Q}}
\newcommand{\cO}{\mathcal{O}}
\newcommand{\cS}{\mathcal{S}}
\newtheorem{remark}{Remark}[section]
\newtheorem{assumption}{Assumption}
\newcommand{\rev}[1]{{\color{black}#1}}
\begin{document}

\title{Analysis of a finite element method for PDEs in evolving domains with topological changes}

\author{Maxim A. Olshanskii\thanks{Department of Mathematics, University of Houston, Houston, Texas 77204-3008, USA, (maolshanskiy@uh.edu), www.math.uh.edu/\string~molshan} \and Arnold Reusken\thanks{Institut f\"ur Geometrie und Praktische  Mathematik, RWTH-Aachen University, D-52056 Aachen, Germany (reusken@igpm.rwth-aachen.de)} }

\maketitle

\begin{abstract}
The paper  presents the first rigorous error analysis of an unfitted finite element method for a linear parabolic problem posed on an evolving domain $\Omega(t)$ that may undergo  a topological change, such as, for example, a domain splitting. The domain evolution is assumed to be $C^2$-smooth away from a critical time $t_c$, at which the topology may change instantaneously.
To accommodate such topological transitions in the error analysis, we introduce several structural assumptions on the evolution of $\Omega(t)$ in the vicinity of the critical time. These assumptions allow a specific stability estimate even across singularities. Based on this stability result we  derive optimal-order discretization error bounds, provided the continuous solution is sufficiently smooth. 
We demonstrate the applicability of our assumptions with examples of level-set  domains undergoing topological transitions and discuss cases where the analysis fails. The theoretical error estimate is confirmed by the results of a numerical experiment.
\end{abstract}

\section{Introduction} The numerical solution of PDEs posed on time-dependent domains or surfaces is a common problem that arises in many fields of application in engineering and natural sciences. Computational techniques for addressing this challenge range from Lagrangian approaches with domain-fitted meshes to purely Eulerian methods that employ meshes indifferent to the domain's motion. There is extensive literature on these computational techniques, that we do not survey here.

In the context of this paper, it is relevant to distinguish between problems with smoothly evolving domains (or surfaces) that undergo relatively mild deformations, and problems in which the domains are strongly deforming or even undergo topological changes. It is well known that Lagrangian approaches are particularly suitable for the former class of problems, whereas Eulerian techniques are often employed to handle the latter.
Regarding the rigorous error analysis of discretization methods, we note the following. For problems with smoothly evolving domains (or surfaces), there is an extensive body of literature. Error analyses of Lagrangian methods can be found in, e.g., \cite{badia2006analysis,Dziuk07,Formaggia2004,elliott2015error,Kovacs2018,ma2022fourth,li2023optimal,li2025optimal}. Papers addressing error analyses of Eulerian methods for problems on smoothly evolving domains or surfaces include \cite{badia2023space,heimann2023geometrically,alphonse2015abstract,elliott2021unified,burman2022eulerian,lou2022isoparametric,neilan2024eulerian,olshanskii2024eulerian,olshanskii2014error}.
To the best of our knowledge, there is no existing work that provides a rigorous error analysis of a discretization method for a moving domain problem with a sharp boundary representation that undergoes a topological change. Such an analysis is the main focus of this paper.

We consider  a linear parabolic problem posed on a time-dependent domain that may undergo topological changes. For the discretization we apply a finite element method  introduced in \cite{lehrenfeld2019eulerian}, which uses an Eulerian framework and a sharp representation of the physical domain boundary. This  method  embeds the deforming domain into a fixed background mesh and combines a standard time-stepping scheme with an implicit extension of the finite element solution to a narrow band surrounding the physical domain. For the case of a smoothly evolving domain an error analysis of this method is presented in  \cite{lehrenfeld2019eulerian}. The method, however, is also well-suited for solving  PDEs in domains undergoing complex deformations --- including topological changes.

Numerous scenarios can lead to topological changes in two- or three-dimensional evolving domains. The analysis presented in this paper covers only a specific subset of such scenarios. In the following section, we describe the domain evolution in detail and formulate a set of assumptions sufficient to establish an optimal-order error analysis of the method. Later in the paper, we investigate level-set domains that undergo topological changes and for which these assumptions are satisfied. We also consider cases in which some of the assumptions are violated and discuss  limitations of the analysis.
Roughly speaking, our assumptions are as follows: the domain evolution is $C^2$-smooth away from a single critical time $t_c$, where a topological change occurs. The types of topological changes we allow include domain splitting, hole formation, and the disappearance of an interior component (island). 

These assumptions on the domain evolution are sufficient to derive a uniform in time estimate for the variation of the $L^2(\Omega(t))$-norm of a smooth function, which provides control over the implicit narrow band extension of the numerical solution. This control is essential for ensuring the stability and accuracy of the method.  The key estimate for this narrow-band extension is established in Section~\ref{sec_key}. The parabolic  model problem and the finite element formulation are explained in Section~\ref{s:model}.
Section~\ref{SecError} presents the stability and error analysis of the method. The main result, stated in Theorem~\ref{Th2}, establishes optimal-order convergence in the standard “energy” norm.

It is already known from the literature that this type of Eulerian unfitted finite element technique can handle PDEs posed on domains undergoing topological changes in a robust and straightforward manner; see, e.g.,  \cite{lehrenfeld2018stabilized,lehrenfeld2019eulerian,yushutin2020numerical,von2021falling,badia2023space,olshanskii2025conservative}. Therefore, in Section~\ref{s:numer} on  numerical results we restrict to one simple example which confirms the optimal convergence rates for a domain splitting problem with a prescribed smooth solution.

\section{Domain evolution} \label{Sectdomains} 
For $t \in [0,T]$, consider a time-dependent bounded domain $\Omega(t) \subset \mathbb{R}^d$, where $d = 2,3$, with $\Gamma(t) = \partial\Omega(t)$. We assume that there exists a \emph{critical time} $t_c \in (0,T)$ such that the evolution of $\Omega(t)$ is $C^2$-smooth before and after $t_c$ (i.e., any possible topological change is \emph{instantaneous}) in the following sense. With $\mathcal{I}_1 := [0,t_c)$, $\mathcal{I}_2 := (t_c, T]$, we assume that
there exist $C^2$-smooth bounded domains $\Omega_i^0 \subset \mathbb{R}^d$, $i=1,2$ (each consisting of a finite number of connected components), and diffeomorphisms $\Phi_i(t): \Omega_i^0 \to \Omega(t)$, $t \in \mathcal{I}_i$, such that $\Phi_i \in C^2(\overline{\Omega_i^0} \times \mathcal{I}_i; \mathbb{R}^d)$, for $i=1,2$.

Thus, $t = t_c$ is a singular time at which we allow the $C^2$-norm of one or both mappings to become unbounded. At the same time, we assume a continuous transition of $\Omega(t)$ through the singularity in the sense of measure:

\begin{assumption}\label{Ass0}  
 It holds	
	\[
	\lim\limits_{\delta\to\pm0} \mathrm{meas}_d\big(\Omega\rev{(t_c)}\, \triangle \,\Omega(t_c+\delta)\big) = 0.
	\]
\end{assumption}  

Such a piecewise smooth domain evolution as described above allows for rather general space--time singularities, including those resulting from instantaneous domain merging, domain splitting or the creation of holes. Some of these scenarios will be covered by our analysis, while others will be not.
To distinguish between these cases, we  introduce additional assumptions on the evolution of $\Omega(t)$ in the vicinity of $t = t_c$. We start with motivating these.

For $t \neq t_c$  the domain  $\Omega(t)$ is  $C^2$ smooth with a well defined outward pointing unit normal vector on $\Gamma(t)$, denoted by $\bn_\G$. The normal velocity of $\Gamma(t)$ is  $V_\G=\partial_t\Phi_i\cdot\bn_\G$, $t\in \mathcal{I}_i$. We introduce 
\[ \Gp(t) = \{x\in \Gamma(t): \, V_\Gamma(x,t)>0\},
\]
and $[f]_+:=\max\{f,0\}$. For a sufficiently smooth function $u\,:\,\Omega(t)\to\R$ such that $\partial_t u=0$ the Reynolds transport theorem yields 
\begin{equation} \label{eq1}
	\frac{d}{dt}\|u\|_{\Omega(t)}^2 =\frac{d}{dt} \int_{\Omega(t)} u^2 \, dx = \int_{\Gamma(t)} V_\Gamma u^2 \, ds \leq \int_{\Gamma(t)} [V_\Gamma]_+ u^2 \, ds \leq \sup_{\G(t)}\,[V_\G]_+\, \|u\|_{\Gp(t)}^2.
\end{equation}
Here and further we use the notation $\|\cdot\|_\omega$ for the $L^2$-norm over a domain $\omega$.
 Applying the trace inequality $\|u\|_{\Gp(t)}^2 \leq  C_{\Gamma}(t)\|u\|_{\Omega(t)}\|u\|_{H^1(\Omega(t))}$  we obtain, 
\begin{equation} \label{eq2}
	\frac{d}{dt}\|u\|_{\Omega(t)}^2 \leq C_{\Gamma}(t)\sup_{\G(t)}\,[V_\G]_+\, \|u\|_{\Omega(t)}\|u\|_{H^1(\Omega(t))}. 
\end{equation}
If the factor $C_{\Gamma}(t) \sup_{\G(t)}\,[V_\G]_+$ is uniformly bounded in time, then \eqref{eq2}
yields control of the rate of change in $\|u\|_{\Omega(t)}^2$ in terms of a weighted sum of  $\|u\|_{\Omega(t)}^2$ and $\|\nabla u\|_{\Omega(t)}^2$:
\begin{equation} \label{eq2b}
		\frac{d}{dt}\|u\|_{\Omega(t)}^2 \leq C_0\|u\|_{\Omega(t)}\|u\|_{H^1(\Omega(t))} \leq C_0\, \Big((1+\frac{1}{2\eps})\|u\|_{\Omega(t)}^2 + 2\eps \|\nabla u\|_{\Omega(t)}^2\Big)\,\quad \forall\,\eps>0,
\end{equation}
with  some $C_0>0$,   
for all sufficiently smooth $u$ that satisfy  $\partial_t u=0$.

A discrete in time analogue of \eqref{eq2b} is crucial for applying a Gronwall type argument and serves as the basis for several stability and error analyses, e.g.  in \cite{lehrenfeld2019eulerian}.  In particular, the narrow band estimate derived in Section~\ref{sec_key} is an estimate of the form \eqref{eq2b} for evolving domains which undergo certain types of topological changes. However, in general the term $C_{\Gamma}(t) \sup_{\G(t)}\,[V_\G]_+$ in \eqref{eq2} may blow up for $t \to t_c$ precluding a uniform control as in \eqref{eq2b}. This motivates the assumptions~\ref{Ass1} and~\ref{Ass3} below.  A further  motivation of these assumptions is given in Remark~\ref{RemNew}.
 
 \smallskip
 \begin{assumption}\label{Ass1}
  Assume  that 
 \begin{equation}\label{ass1}
V_{\max}^+ :=\sup_{t\in\mathcal{I}_1\cup \mathcal{I}_2} \sup_{\G(t)}\,[V_\G]_+ < \infty.
 \end{equation}
 \end{assumption}

Consider the trace inequality,
\begin{equation}\label{est:tr}
	\| u \|_{L^2(\Gp(t))}^2 \le C_{\Gamma}(t) \| u \|_{L^2(\Omega(t))}\| u \|_{H^1(\Omega(t))},\quad t\in\mathcal{I}_1\cup \mathcal{I}_2.
\end{equation} 

\begin{assumption}\label{Ass3}
	For the optimal  constant from \eqref{est:tr} assume
		\begin{equation}\label{ass3} 
			\sup\limits_{t\in\mathcal{I}_1\cup \mathcal{I}_2}  C_{\Gamma}(t)< \infty.
		\end{equation}
\end{assumption}

It is important to note that in  \eqref{est:tr} the trace is taken  on $\Gp(t)$ and not on $\Gamma(t)$.

We further need the notion of  space--time domains: 
\[
\cQ = \bigcup\limits_{t \in (0,T)} \Omega(t) \times \{t\}, \quad \cQ_i = 
\bigcup\limits_{t \in \cI_i} \Omega(t) \times \{t\},~~i=1,2.
\]
The `lateral' boundary of $\cQ$ is denoted by   \[\GQ:= \bigcup\limits_{t \in (0,T)} \Gamma(t)\times \{t\} \subset \R^{d+1}.\]

In the error analysis in Section~\ref{SecError} we use extensions of functions from the evolving physical domain $\Omega(t)$ to a (small) spatial neighborhood. This extension needs to be uniformly in time stable in certain Sobolev norms.  Such uniform stability will be achieved here by employing a suitable extension  from the space--time domain $\cQ$ to $\R^{d+1}$. 
This brings us to the following assumption:

\smallskip
\begin{assumption}\label{Ass4}
We assume $\cQ$ is a Lipschitz domain.
\end{assumption}
\smallskip

In the Sections~\ref{Sectnonsmooth} and \ref{newSectdomains} we study scenarios of domains with topological changes 
for which the assumptions above are fulfilled. In particular, the analysis in Section~\ref{newSectdomains} yields examples and counterexamples of evolving domains satisfying Assumptions~\ref{Ass1}--\ref{Ass4}. 
 

\section{Topological change at a weakly singular point} \label{Sectnonsmooth}

In this section, we consider a transition scenario where the singularity occurs  instantaneous at $t=t_c$ and at a single point $(x_c,t_c)$ in space–time, where locally certain additional conditions are satisfied. We will verify that Assumptions~\ref{Ass1} and~\ref{Ass3} then hold. More precisely, we assume that the inverse diffeomorphisms $\Phi_i(t)^{-1}$, for $t \in \cI_i$, $i=1,2$, have continuous extensions as $t \to t_c$, away from the critical point $x_c$:
\begin{align*}
	\Phi_1(t_c)^{-1}(x)& = \lim_{t\uparrow t_c} \Phi_1(t)^{-1}(x), \quad x \in \Omega\rev{(t_c)} \setminus\{x_c\}, \\
	\Phi_2(t_c)^{-1}(x)& = \lim_{t\downarrow t_c} \Phi_2(t)^{-1}(x), \quad x \in \Omega\rev{(t_c)} \setminus\{x_c\}.
\end{align*}

For $z \in \R^{d+1}$ and $\delta >0$, we denote the ball centered at $z$ with radius $\delta$ by $B_\delta (z)$.
\begin{definition} \label{defweak} 
	\rm A point $(x_c,t_c) \in \GQ$ is called a \emph{weakly singular point} if the following two conditions hold:
	\begin{equation} \label{condition1}
		\forall~\eps>0:\quad	\Phi_i^{-1}\in C^2\Big(\overline{\cQ_i\setminus  B_\eps (x_c,t_c)};\, \R^{d+1}\Big),\quad i=1,2,
	\end{equation}
	and there exists $\delta >0$ such that
	\begin{equation} \label{weakly}
		V_\Gamma(x,t) < 0 \quad \text{for all } (x,t) \in \GQ^- \cap B_\delta(x_c,t_c), 
	\end{equation}
	where $\GQ^- := \GQ \setminus \{(x_c,t_c)\}$.
\end{definition}

Condition~\eqref{condition1} ensures that the domain evolution is $C^2$-smooth everywhere except at the critical point. In particular, this implies that $V_\Gamma(x,t)$ is well-defined and finite for all $(x,t) \in \GQ^-$. Hence, the sign condition~\eqref{weakly} is meaningful. In Section~\ref{newSectdomains} we explain why the sign condition  $V_\Gamma < 0$ near the singular point is natural. 
We also show that for certain classes of level set domains, this condition is in a certain sense necessary to be able to control the variation of $\|u\|_{\Omega(t)}^2$ as a function of $t$.
\smallskip
 
We now show that conditions~\eqref{condition1} and~\eqref{weakly} are sufficient for Assumption~\ref{Ass1}.

\begin{lemma} \label{Cor1}
	If conditions~\eqref{condition1} and~\eqref{weakly} are satisfied, then $\|[V_\Gamma]_+\|_{L^\infty(\GQ)} < \infty$, where $[V_\Gamma]_+ := \max\{V_\Gamma, 0\}$. Hence, Assumption~\ref{Ass1} is fulfilled.
\end{lemma}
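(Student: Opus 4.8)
The plan is to localise around the space--time singularity. Since $\GQ$ is a $d$-dimensional hypersurface in $\R^{d+1}$, while the slice $\Gamma(t_c)\times\{t_c\}$ and the point $(x_c,t_c)$ are at most $(d-1)$-dimensional, these sets carry no $d$-dimensional surface measure and hence do not affect $\|[V_\Gamma]_+\|_{L^\infty(\GQ)}$. Writing $\Sigma_i:=\bigcup_{t\in\cI_i}\Gamma(t)\times\{t\}$, so that $\GQ$ differs from $\Sigma_1\cup\Sigma_2$ only by this null set, it therefore suffices to produce a \emph{pointwise} bound for $[V_\Gamma]_+$ on $\Sigma_1\cup\Sigma_2$; the supremum of such a bound is exactly the quantity $V_{\max}^+$ of \eqref{ass1}, so this simultaneously yields the $L^\infty$ claim and Assumption~\ref{Ass1}. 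I then split each $\Sigma_i$ using the ball $B_\delta(x_c,t_c)$ from \eqref{weakly}.

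On the near part $\Sigma_i\cap B_\delta(x_c,t_c)$ nothing is to be done: it is contained in $\GQ^-$, so \eqref{weakly} gives $V_\Gamma<0$ and hence $[V_\Gamma]_+=0$. For the far part $\Sigma_i\setminus B_\delta(x_c,t_c)$ I would convert the inverse-map regularity \eqref{condition1} into a velocity bound via the level-set representation. Fix a $C^2$ defining function $\phi_i^0$ for $\Omega_i^0$ with $\nabla\phi_i^0\neq0$ on $\Gamma_i^0=\partial\Omega_i^0$ (e.g. a signed distance function), and set $\phi_i(x,t):=\phi_i^0\big(\Phi_i(t)^{-1}(x)\big)$; then $\Gamma(t)=\{\phi_i(\cdot,t)=0\}$ and the normal velocity is $V_\Gamma=-\partial_t\phi_i/|\nabla_x\phi_i|$. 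Taking $\eps=\delta$ in \eqref{condition1} makes $\Phi_i^{-1}$, and hence by composition $\phi_i$, of class $C^2$ on the compact set $K_i:=\overline{\cQ_i\setminus B_\delta(x_c,t_c)}$, so $\partial_t\phi_i$ and $\nabla_x\phi_i$ are continuous and bounded on $K_i$. By the chain rule $\nabla_x\phi_i=(D_x\Phi_i(t)^{-1})^{\!\top}\,\nabla\phi_i^0(\Phi_i(t)^{-1}(x))$, whose two factors are, on $\GQ\cap K_i$, respectively invertible (because $\Phi_i(t)$ is a diffeomorphism) and nonzero (its argument lies on $\Gamma_i^0$); continuity and compactness then give $|\nabla_x\phi_i|\ge c_i>0$ on $\GQ\cap K_i$, and therefore $|V_\Gamma|\le M_i<\infty$ on $\Sigma_i\setminus B_\delta(x_c,t_c)$.

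Combining the two regions gives $[V_\Gamma]_+\le\max\{M_1,M_2\}$ on $\Sigma_1\cup\Sigma_2$, which is the desired bound. The one delicate point — and the step I expect to be the real obstacle — is the uniform lower bound $|\nabla_x\phi_i|\ge c_i>0$ on $K_i$: here it is crucial that the compact set reaches up to the critical slice $t=t_c$ (away from $x_c$), where $\Phi_i(t)^{-1}$ must remain a nondegenerate $C^2$-diffeomorphism rather than merely a $C^2$ map, so that the continuous, nonvanishing gradient attains a positive minimum. This nondegeneracy is the substantive content of \eqref{condition1} beyond plain smoothness, and it is what prevents the forward map's blow-up near $(x_c,t_c)$ from reaching into the far region $K_i$.
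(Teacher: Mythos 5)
Your proof is correct and uses the same localization skeleton as the paper — the ball $B_\delta(x_c,t_c)$ from \eqref{weakly} splits $\GQ$ into a near part where $[V_\Gamma]_+=0$ and a far part controlled via \eqref{condition1} — but you execute the far-field bound by a genuinely different mechanism. The paper works directly with the kinematic formula $V_\Gamma=\partial_t\Phi_i\cdot\bn_\Gamma$ and estimates
\[
\|V_\Gamma\|_{L^\infty(\GQ\setminus B_\delta)}\le \max_{i}\|\Phi_i\|_{W^{1,\infty}(\Phi_i^{-1}(\GQ\setminus B_\delta))}\le c\max_i\|\Phi_i^{-1}\|_{W^{1,\infty}(\GQ\setminus B_\delta)}<\infty,
\]
whereas you introduce a defining function $\phi_i=\phi_i^0\circ\Phi_i(t)^{-1}$ and use $V_\Gamma=-\partial_t\phi_i/|\nabla_x\phi_i|$ together with compactness of $\overline{\cQ_i\setminus B_\delta}$ — in effect transplanting into Lemma~\ref{Cor1} the construction the paper itself deploys for the trace estimate (Lemma~\ref{lemtrace}) and for the level-set case (Lemma~\ref{lemLevelset}). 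The ``delicate point'' you flag is real, but note that it is present, only less visibly, in the paper's own argument: since $\partial_t\Phi_i=-(D_x\Phi_i^{-1})^{-1}\,\partial_t\Phi_i^{-1}$, the paper's inequality $\|\Phi_i\|_{W^{1,\infty}}\lesssim\|\Phi_i^{-1}\|_{W^{1,\infty}}$ silently assumes that $D_x\Phi_i^{-1}$ stays uniformly nondegenerate up to the closure (i.e., up to the slice $t=t_c$ away from $x_c$), which is exactly what your lower bound $|\nabla_x\phi_i|\ge c_i>0$ encodes; if anything your version needs marginally less, since only the component of $D_x\Phi_i^{-1}$ along $\nabla\phi_i^0$ must stay away from zero. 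So both proofs read this nondegeneracy into \eqref{condition1}, and your proposal is at the same level of rigor as the published one. Your opening measure-theoretic remark (discarding the slice $\Gamma(t_c)\times\{t_c\}$ as a null set for the $L^\infty$-norm) is extra care the paper does not take, but it is also dispensable: under \eqref{condition1} the velocity $V_\Gamma$ extends continuously to all of $\GQ^-$, including points with $t=t_c$, $x\neq x_c$, so the supremum over $\GQ\setminus B_\delta$ is directly meaningful, which is how the paper argues.
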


\begin{proof}
	Let $B_\delta := B_\delta(x_c,t_c)$. Due to~\eqref{condition1}, we estimate:
	\begin{align*}
		\|V_\Gamma\|_{L^\infty(\GQ \setminus B_\delta)} 
		&= \max_{i=1,2} \left\| \partial_t \Phi_i \circ \Phi_i^{-1} \right\|_{L^\infty(\GQ \setminus B_\delta)} \\
		&\leq \max_{i=1,2} \|\Phi_i\|_{W^{1,\infty}(\Phi_i^{-1}(\GQ \setminus B_\delta))} \\
		&\leq c \max_{i=1,2} \|\Phi_i^{-1}\|_{W^{1,\infty}(\GQ \setminus B_\delta)} < \infty.
	\end{align*}
	Using $V_\Gamma < 0$ on $\GQ^- \cap B_\delta$, we conclude:
	\[
	\|[V_\Gamma]_+\|_{L^\infty(\GQ)} \leq \|V_\Gamma\|_{L^\infty(\GQ \setminus B_\delta)} < \infty.
	\]
\end{proof}

To verify Assumption~\ref{Ass3}, we derive a trace estimate. Recall the definition $\Gp(t) := \{\, x \in \Gamma(t) \mid V_\Gamma(x,t) > 0\,\}$.

\begin{lemma} \label{lemtrace}
	Assume conditions~\eqref{condition1} and~\eqref{weakly} hold. Then there exists a constant $C_{tr}$, independent of $t$, such that for all $t \in \cI_1 \cup \cI_2$:
	\begin{equation} \label{ass2check}
		\|u\|_{L^2(\Gp(t))}^2 \leq C_{tr} \|u\|_{\Omega(t)} \|u\|_{H^1(\Omega(t))}, \quad \text{for all } u \in H^1(\Omega(t)).
	\end{equation}
\end{lemma}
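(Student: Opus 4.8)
The plan is to reduce \eqref{ass2check} to the standard multiplicative (Gagliardo--Nirenberg) trace inequality obtained from the divergence theorem with an auxiliary vector field, and to exploit the sign condition \eqref{weakly} to keep the construction uniform in $t$. Concretely, I would show that for each $t\in\cI_1\cup\cI_2$ there is a field $\mathbf{b}=\mathbf{b}(\cdot,t)\in C^1(\overline{\Omega(t)};\R^d)$ with $\mathbf{b}\cdot\bn_\Gamma\ge 0$ on $\Gamma(t)$ and $\mathbf{b}\cdot\bn_\Gamma\ge 1$ on $\Gp(t)$, whose $W^{1,\infty}(\Omega(t))$-norm is bounded by a constant independent of $t$. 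Granting this, for $u\in C^1(\overline{\Omega(t)})$ one writes $\|u\|_{L^2(\Gp(t))}^2\le\int_{\Gamma(t)}u^2(\mathbf{b}\cdot\bn_\Gamma)\ds=\int_{\Omega(t)}\bigl(u^2\,\mathrm{div}\,\mathbf{b}+2u\,\mathbf{b}\cdot\nabla u\bigr)\dx$, and bounds the right-hand side by $\|\mathbf{b}\|_{W^{1,\infty}}\bigl(\|u\|_{\Omega(t)}^2+2\|u\|_{\Omega(t)}\|\nabla u\|_{\Omega(t)}\bigr)\le C_{tr}\|u\|_{\Omega(t)}\|u\|_{H^1(\Omega(t))}$; a density argument then extends the estimate to all $u\in H^1(\Omega(t))$. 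It is essential here that the left-hand side integrates only over $\Gp(t)$, since this is what allows $\mathbf{b}$ to be supported away from the critical point so that the bulk integral never sees the degenerating part of the geometry.

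The geometric heart of the argument is to locate $\Gp(t)$ away from the singularity. Writing $\Gp:=\{(x,t)\in\GQ^-:V_\Gamma(x,t)>0\}$ for the space--time positive-velocity set, the sign condition \eqref{weakly} immediately gives $\Gp\cap B_\delta(x_c,t_c)=\emptyset$, i.e. $\Gp\subset\GQ\setminus B_\delta(x_c,t_c)$; in particular every point of $\Gp$ has space--time distance at least $\delta/2$ from the smaller ball $B_{\delta/2}(x_c,t_c)$. On $\overline{\cQ_i\setminus B_{\delta/2}(x_c,t_c)}$, which is compact, condition \eqref{condition1} (with $\eps=\delta/2$) provides uniform $C^2$-bounds on $\Phi_i^{-1}$, hence on $\Phi_i$ and on the Jacobians, which are also bounded away from zero. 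Consequently $\Gamma(t)$ is uniformly $C^2$ in a spatial $(\delta/2)$-ball around each point of $\Gp(t)$: its principal curvatures are uniformly bounded and it admits a one-sided tubular neighborhood of some uniform width $\rho_0>0$ on which the outward normal $\bn_\Gamma$ extends to a $C^1$ field $N(\cdot,t)$ with $t$-independent bounds.

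With this in hand the field is built as $\mathbf{b}=\chi\,\eta\,N$, where $\chi$ is the time-slice restriction of a fixed cutoff on $\R^{d+1}$ equal to $1$ outside $B_{3\delta/4}(x_c,t_c)$ and to $0$ inside $B_{\delta/2}(x_c,t_c)$, and $\eta=\eta(\dist(\cdot,\Gamma(t)))$ cuts off transversally within the tubular neighborhood so that $\mathbf{b}$ is defined on all of $\overline{\Omega(t)}$. On $\Gamma(t)$ one has $\eta=1$ and $N\cdot\bn_\Gamma=1$, so $\mathbf{b}\cdot\bn_\Gamma=\chi\ge0$, with $\chi=1$ on $\Gp(t)$ because $\Gp\subset\GQ\setminus B_\delta(x_c,t_c)$; away from the boundary and near the critical point $\mathbf{b}$ vanishes. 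All three factors carry $t$-independent $W^{1,\infty}$-bounds, which yields the uniform constant $C_{tr}$. For times with $|t-t_c|\ge\delta/2$ the whole of $\Gamma(t)$ already lies in the uniform-$C^2$ region and one recovers the classical trace inequality with a uniform constant, so the only delicate regime is $t\to t_c$.

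The main obstacle is precisely this uniformity as $t\to t_c$. A tempting shortcut---pulling $u$ back to the fixed reference domain $\Omega_i^0$ and invoking its fixed trace constant---fails, because the bulk norms $\|u\circ\Phi_i\|_{L^2(\Omega_i^0)}$ and $\|u\circ\Phi_i\|_{H^1(\Omega_i^0)}$ pick up the Jacobian of $\Phi_i^{-1}$, which blows up near $x_c$, and cannot be controlled by $\|u\|_{\Omega(t)}$ and $\|u\|_{H^1(\Omega(t))}$ uniformly. The vector-field localization avoids this by never integrating against a blowing-up weight: the support of $\mathbf{b}$ stays in the region kept smooth by \eqref{condition1}, while the right-hand side is simply majorized by the full-domain norms. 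The real work, then, is to verify that the normal extension $N$, the tubular width $\rho_0$, and hence $\|\mathbf{b}\|_{W^{1,\infty}}$ can all be chosen independently of $t$; this is exactly where the sign condition \eqref{weakly}, through the inclusion $\Gp\subset\GQ\setminus B_\delta(x_c,t_c)$, does the essential work by guaranteeing that $\Gp(t)$ never approaches the point where the $C^2$-structure degenerates.
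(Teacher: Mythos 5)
Your proposal is correct, and it rests on the same geometric pivot as the paper's proof: the sign condition \eqref{weakly} forces $\Gp(t)$ into the region at space--time distance at least $\delta$ from $(x_c,t_c)$, where \eqref{condition1} supplies $t$-uniform $C^2$ control, so that only a uniformly smooth piece of $\Gamma(t)$ ever enters the trace estimate. The implementation of the trace inequality itself, however, is genuinely different. The paper pulls back a mollified signed distance function, $\phi=\phi_0\circ\Phi_1^{-1}$, obtains the uniform bounds \eqref{h7} on $\tilde\Gamma=\Gamma(t)\setminus B_\delta$, constructs the one-sided normal strip $U_\eps=\{z+\xi\bn(z)\}$, verifies $U_\eps\subset\overline{\Omega(t)}$ with $t$-independent $\eps$ via a second-order Taylor expansion of $\phi$, and then runs the fundamental-theorem-of-calculus/coarea argument along normal segments. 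You instead build a vector field $\mathbf{b}$ with $\mathbf{b}\cdot\bn_\Gamma\ge0$ on $\Gamma(t)$ and $\ge1$ on $\Gp(t)$, uniformly bounded in $W^{1,\infty}$, and apply the divergence theorem (the Gagliardo--Ne\v{c}as trick). Your route is cleaner in one respect: it makes transparent that the bulk integral never meets the degenerating geometry, and your explicit remark that the naive pullback to $\Omega_i^0$ fails because of Jacobian blow-up is precisely the obstruction the paper's construction circumvents silently. The paper's route is more constructive in another respect: it actually \emph{proves} the one uniform statement you only assert, namely that the one-sided tubular neighborhood (your width $\rho_0$ and the $C^1$ normal extension $N$) exists with $t$-independent width --- this is exactly what the Taylor expansion showing $\phi(z+\xi\bn(z))\le0$ for $-\min\{\eps,2c_1/c_3\}\le\xi\le0$ delivers, and you could import that computation verbatim to close this point. (Both proofs assert, at the same level of rigor, the uniform nondegeneracy of the Jacobian of $\Phi_i^{-1}$ on the compact set $\overline{\cQ_i\setminus B_{\delta/2}}$, so you match the paper there.) Finally, your divergence-theorem identity and the density of $C^1(\overline{\Omega(t)})$ in $H^1(\Omega(t))$ are legitimate because $\Omega(t)$ is a $C^2$ domain for every $t\in\cI_1\cup\cI_2$; the resulting constant depends only on $\|\mathbf{b}\|_{W^{1,\infty}(\Omega(t))}$ and is therefore independent of $t$, as the lemma requires.
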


\begin{proof}
	Let $t \in \cI_1$, and take a ball $B_\delta$ as in Definition~\ref{defweak}. Define $\tilde \Gamma := \Gamma(t) \setminus B_\delta$. Then $\Gp(t) \subset \tilde \Gamma$, and $\tilde \Gamma$ is $C^2$-smooth.
		Let $\phi_0$ be a mollified signed distance function for $\Omega_1^0$ such that $\phi_0 \in C^2(\rev{\overline{\Omega_1^0}})$, $\phi_0 = 0$ on $\partial \Omega_1^0$, and $|\nabla \phi_0| > 0$ on $\partial \Omega_1^0$. Define $\phi = \phi_0 \circ \Phi_1^{-1} : \Omega(t) \to \R$.
	
	From~\eqref{condition1} and the definition of $\phi$, we find constants $c_1 > 0$ and $c_2$, independent of $t \in \cI_1$, such that
	\begin{equation} \label{h7}
		|\nabla \phi(x)| \geq c_1, \quad \|\nabla^2 \phi(x)\|_2 \leq c_2 \quad \text{for all } x \in \tilde \Gamma.
	\end{equation}
	Define $\bn(z) := \frac{\nabla \phi(z)}{|\nabla \phi(z)|}$ for $z \in \tilde \Gamma$, and for $\eps > 0$, let
	\[
	U_\eps := \{\, x = z + \xi \bn(z) \mid z \in \tilde \Gamma,\ -\eps \leq \xi \leq 0 \,\}.
	\]
	For $\eps > 0$ sufficiently small (independent of $t$), we \rev{verify below that $U_\eps \subset \overline{\Omega(t)}$} and $U_\eps \cap B_{\frac{1}{2}\delta} = \emptyset$. Moreover, $(z,\xi)$ uniquely parametrizes $x = z + \xi \bn(z) \in U_\eps$. There is a constant $c_3$ independent of $t\in\cI_1$ such that $\sup_{x \in U_\eps} \|\nabla^2 \phi(x)\|_2 \leq c_3$ holds.
	Using Taylor expansion, we obtain for $x = z + \xi \bn(z)$ and suitable $\eta \in (-\eps, 0)$:
	\begin{align*}
		\phi(x) &= \phi(z) + \xi \nabla \phi(z) \cdot \bn(z) + \tfrac{1}{2} \xi^2 \bn(z)^T \nabla^2 \phi(z + \eta \bn(z)) \bn(z) \\[0.3ex]
		&= \xi |\nabla \phi(z)| \left(1 + \tfrac{1}{2} \xi \frac{\bn(z)^T \nabla^2 \phi(z + \eta \bn(z)) \bn(z)}{|\nabla \phi(z)|} \right).
	\end{align*}
	Using $\frac{|\bn(z) \cdot \nabla^2 \phi (z+ \eta \bn(z))\bn(z)|}{|\nabla \phi(z)|} \leq \frac{c_3}{c_1}$ it follows that for $\xi$ that satisfies $- \min\{\eps,\frac{2c_1}{c_3}\} \leq \xi \leq 0$ we have $\phi(x) \leq 0$. Without loss of generality we assume $ \min\{\eps,\frac{2c_1}{c_3}\}=\eps$ and thus $U_\eps \subset \overline{\Omega(t)}$ holds for this $\eps$ that is independent of $t$.
	
	We now proceed with a standard trace estimate for $u \in C^1(\Omega(t))$. Define $\tilde u(z, \xi) := u(z + \xi \bn(z))$. Then
	\[
	\frac{\partial \tilde u}{\partial \xi}(z, \xi) = \bn(z) \cdot \nabla u(z + \xi \bn(z)).
	\]
	For $z \in \tilde \Gamma$, we compute:
	\[
	u(z)^2 = \tilde u(z,0)^2 = \int_{-\eps}^0 \frac{\partial}{\partial \xi} \big( (1 + \tfrac{\xi}{\eps}) \tilde u(z,\xi)^2 \big)\, d\xi.
	\]
	Integrating over $\tilde \Gamma$ and applying the coarea formula yields:
	\begin{align*}
		\|u\|_{L^2(\Gp(t))}^2 &\leq \|u\|_{L^2(\tilde \Gamma)}^2 \leq \tfrac{1}{\eps} \|u\|_{L^2(U_\eps)}^2 + 2 \|u\|_{L^2(U_\eps)} \|\nabla u\|_{L^2(U_\eps)} \\
		&\leq \sqrt{2} \max\left\{ \tfrac{1}{\eps}, 2 \right\} \|u\|_{L^2(\Omega(t))} \|u\|_{H^1(\Omega(t))}.
	\end{align*}
	The same argument applies for $t \in \cI_2$, completing the proof.
\end{proof}

This result confirms that, under a topological change occurring at a weakly singular point $(x_c, t_c)$, the uniform trace estimate required by Assumption~\ref{Ass3} is satisfied.


\section{A class of level set domains with topological changes} \label{newSectdomains} 
For further insight into possible scenarios of topological transitions and analysis of Assumptions~\ref{Ass0}--\ref{Ass4}, we study cases where $\Omega(t)$ is characterized as the subzero level set of a Lipschitz continuous level set function $\phi$:
\begin{equation} \label{subzero} 
	\forall ~t \in [0,T]:~~\Omega(t)=\{\, x \in \R^d ~|~ \phi(x,t) < 0\,\},\quad \Gamma(t)=\{\, x \in \R^d ~|~ \phi(x,t) = 0\,\}. 
\end{equation}

Note that for continuous $\phi$, Assumption~\ref{Ass0} is always satisfied. One of the main reasons for the widespread use of level set methods is that a \emph{smoothly} evolving level set function can implicitly represent a topological change. In this section, we restrict ourselves to this smooth case. More precisely, we assume
\begin{equation} \label{eq:smooth} 
	\phi \in C^2(\R^d \times [0,T]). 
\end{equation}
In Section~\ref{secnonsmooth}, we briefly address examples involving less regular level set functions.

If $|\nabla \phi(x,t)| \geq c_0 > 0$ for all $x \in \Gamma(t)$ and $t \in [0,T]$, then, by the implicit function theorem, $\Gamma(t)$ does not undergo topological changes.
To allow for topological changes, we consider an isolated \emph{critical point} $(x_c,t_c) \in \GQ$, i.e.,
\begin{equation} \label{critpoint}
	\nabla \phi(x_c,t_c)=0, \qquad \nabla \phi(x,t)\neq 0 \quad \text{for all }~(x,t)\in \Gamma_\cQ^-:=\Gamma_\cQ \setminus (x_c,t_c). 
\end{equation}
The critical point is called \emph{nondegenerate} if
\begin{equation} \label{critpoint2} 
	\det \big( \nabla^2 \phi(x_c,t_c)\big) \neq 0.
 \end{equation}
In this case, certain topological transitions can be classified via Morse theory~\cite{milnor1963morse}. The following parameter-dependent version of the Morse lemma is proved in \cite{Laurain2018}.

\smallskip \begin{lemma}\label{L:Morse} Consider a nondegenerate critical point $(x_c,t_c)$ on the zero level of $\phi$, and assume $\phi$ is $C^\infty$ in a neighborhood of $(x_c,t_c)$. Without loss of generality, assume $(x_c,t_c)=(0,0)$. Then there exists a neighborhood $\widehat X= X \times (-\delta,\delta)$ of $(0,0)$ in $\R^{d+1}$ and a map $\psi:\, \widehat X \to \R^d$ such that: $\psi(0,0)=0$, $\psi \in C^\infty(\widehat X)$, $\psi(\cdot, t)$ is a diffeomorphism from $X$ onto $\psi(X,t)$ for each $t \in (-\delta,\delta)$. Furthermore, the following normal form holds:
\begin{equation} \label{normalform}
	\phi\big(\psi(x,t),t\big)= - \sum_{i=1}^q x_i^2 + \sum_{i=q+1}^d x_i^2 + v(t), 
\end{equation}
with $0 \leq q \leq d$ and $v: (-\delta,\delta) \to \R$ such that $v(0)=0$, $v'(0)=\frac{\partial \phi}{\partial t}(0,0)$. 
\end{lemma}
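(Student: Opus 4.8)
The plan is to follow the classical strategy for the parametric Morse lemma, carrying the parameter $t$ through every construction so that all maps stay $C^\infty$ jointly in $(x,t)$ and localizing the normal form around a moving critical point. First I would track the spatial critical point as a function of $t$: since the spatial Hessian $\nabla^2\phi(0,0)$ is nonsingular by~\eqref{critpoint2}, the implicit function theorem applied to $\nabla\phi(x,t)=0$ (with $x$ the unknown and $t$ the parameter) produces a $C^\infty$ curve $t\mapsto x^\ast(t)$, defined for $|t|<\delta$ with $\delta$ small, such that $x^\ast(0)=0$ and $\nabla\phi(x^\ast(t),t)=0$. I then set $v(t):=\phi(x^\ast(t),t)$; differentiating and using $\nabla\phi(x^\ast(t),t)=0$ gives $v'(t)=\partial_t\phi(x^\ast(t),t)$, and since $(0,0)$ lies on the zero level this yields $v(0)=0$ and $v'(0)=\partial_t\phi(0,0)$, as claimed.

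Next I would recenter at the moving critical point. Writing $g(y,t):=\phi(x^\ast(t)+y,t)-v(t)$ gives $g(0,t)=0$ and $\nabla_y g(0,t)=0$, while the Hessian $\nabla_y^2 g(0,t)=\nabla^2\phi(x^\ast(t),t)$ remains nonsingular for $|t|<\delta$ after shrinking $\delta$. Taylor's theorem with integral remainder then supplies a $C^\infty$ symmetric matrix field $H(y,t)=(h_{ij}(y,t))$ with
\[
g(y,t)=\sum_{i,j=1}^d h_{ij}(y,t)\,y_iy_j,\qquad H(0,t)=\tfrac12\,\nabla^2\phi(x^\ast(t),t).
\]
In particular $H(0,t)$ is nonsingular; its eigenvalues depend continuously on $t$ and none can cross zero, so its signature---hence the number $q$ of negative eigenvalues---is constant on $(-\delta,\delta)$ and equals the Morse index of $\nabla^2\phi(0,0)$.

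The heart of the argument, and the step I expect to be the main obstacle, is the simultaneous smooth diagonalization of $H$. Following the inductive proof of the Morse lemma~\cite{milnor1963morse} with $t$ carried along as a parameter, at each stage I would use a $t$-smooth linear change among the remaining coordinates to render one diagonal entry of $H$ nonvanishing on a full neighborhood, extract the nowhere-zero $C^\infty$ factor $\sqrt{|h_{rr}(y,t)|}$, and complete the square to split off a single term $\pm x_r^2$, reducing the problem to dimension $d-1$. The delicate points are keeping every substitution $C^\infty$ jointly in $(x,t)$---which holds because the pivoted entries stay nonzero and the extracted square roots are of positive smooth functions---and ensuring the signs are $t$-independent, which is exactly the constancy of $q$ established above. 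Composing these elementary changes produces a map $y=\Psi(x,t)$ that is $C^\infty$, fixes the origin ($\Psi(0,t)=0$), is a diffeomorphism in $x$ for each fixed $t$, and satisfies $g(\Psi(x,t),t)=-\sum_{i=1}^q x_i^2+\sum_{i=q+1}^d x_i^2$.

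Finally I would assemble the pieces by setting $\psi(x,t):=x^\ast(t)+\Psi(x,t)$. Then $\psi\in C^\infty(\widehat X)$ on $\widehat X=X\times(-\delta,\delta)$, $\psi(0,0)=x^\ast(0)+\Psi(0,0)=0$, and $\psi(\cdot,t)$ is a diffeomorphism from $X$ onto $\psi(X,t)$, while
\[
\phi(\psi(x,t),t)=g(\Psi(x,t),t)+v(t)=-\sum_{i=1}^q x_i^2+\sum_{i=q+1}^d x_i^2+v(t),
\]
which is precisely the normal form~\eqref{normalform}.
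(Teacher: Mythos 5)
Your argument is sound, but note that the paper itself gives no proof of this lemma: it is quoted from \cite{Laurain2018}, and the accompanying remark explicitly declines to reprove it or to track minimal regularity, since the normal form is used only for classification. Your proposal is therefore a self-contained substitute for the citation, and it follows the classical route: the implicit function theorem applied to $\nabla\phi(x,t)=0$ (legitimate by the nondegeneracy condition \eqref{critpoint2}) yields the smooth curve $x^\ast(t)$ of spatial critical points --- which, for $t\neq 0$, need not lie on the zero level, so there is no conflict with the isolatedness assumption \eqref{critpoint} --- and your identities $v(0)=\phi(0,0)=0$ and $v'(t)=\partial_t\phi(x^\ast(t),t)$ are correct. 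The Taylor integral remainder giving the symmetric field $H(y,t)$ with $H(0,t)=\tfrac12\nabla^2\phi(x^\ast(t),t)$, the constancy of the index $q$ by continuity of eigenvalues of a nonsingular family, and the parametric version of Milnor's inductive completion of squares \cite{milnor1963morse} are all standard and you correctly flag the two delicate points (pivots nonvanishing on a full $(y,t)$-neighborhood after shrinking, and $t$-independence of the signs). Two small details are glossed but routine: the elementary substitutions you build go from old to new coordinates, so to obtain $\Psi$ you must invert, and joint $C^\infty$ smoothness of the inverse in $(x,t)$ follows by applying the inverse function theorem to the augmented map $(y,t)\mapsto(u(y,t),t)$, whose Jacobian is block triangular with invertible spatial block; and each of the $d$ inductive stages shrinks the neighborhood, which is harmless since there are finitely many. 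What your approach buys beyond the citation is transparency about regularity: the construction loses exactly two derivatives through the integral-remainder coefficients $h_{ij}$, so it would give a $C^{k-2}$ normal form for $\phi\in C^k$, $k\geq 3$, which speaks directly to the question the paper's remark leaves open.
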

\medskip 

\begin{remark} \rm The paper \cite{Laurain2018} states Lemma~\ref{L:Morse} under the assumption of $C^\infty$ regularity. Since the lemma is used here purely for classification and not in any analysis, we do not attempt to determine the minimal regularity needed for the normal form \eqref{normalform} to hold.
 \end{remark}

\medskip 
Further we are interested in critical points satisfying
\begin{equation} \label{critpoint3} 
	\frac{\partial \phi}{\partial t}(x_c,t_c) \neq 0. 
\end{equation}
This condition corresponds to an instantaneous topological transition. 

Using $\psi(x_c,t_c)=x_c$ and $\nabla \phi(x_c,t_c)=0$ we compute
\[
\begin{split}
\nabla^2\phi\big(\psi(x_c,t_c),t_c\big) &= [\nabla\psi(x_c,t_c)]^T\nabla^2\phi(x_c,t_c) \nabla\psi(x_c,t_c) + \sum_{i=1}^d\frac{\partial\phi}{\partial x_i}(x_c,t_c)\nabla^2\psi_i(x_c,t_c)\\
 &= 
 [\nabla\psi(x_c,t_c)]^T\nabla^2\phi(x_c,t_c) \nabla\psi(x_c,t_c).
 \end{split}
\]
Since $\nabla\psi(x_c,t_c)\in \R^{d\times d}$ is non-singular, the Sylvester law of inertia and \eqref{normalform} yield 
\begin{equation}\label{Inertia}
\text{Inertia}\big(\nabla^2\phi(x_c,t_c)\big) = \{d-q,q,0\}.
\end{equation}
 Recalling that the evolving domains are characterized by $\phi < 0$, thanks to the observation in \eqref{Inertia}, the normal form \eqref{normalform} and the expansion $v(t)= t\frac{\partial \phi}{\partial t}(0,0) + \mathcal{O}(t^2)$, we can classify nondegenerate critical points $(x_c,t_c)$:  Let the eigenvalues of $\nabla^2\phi(x_c,t_c)$ be
\[
\lambda_1\le\dots\le \lambda_d,\quad \lambda_i\in\text{sp}\big(\nabla^2\phi(x_c,t_c)\big). 
\]
For nondegenerate critical points all eigenvalues are non-zero and we obtain the following

\smallskip 

\noindent\texttt{Classification of nondegenerate critical points}\\
The case $d=2$:
\begin{itemize}
	\item $0<\lambda_1\le\lambda_2$,   $\frac{\partial \phi}{\partial t}(x_c,t_c) > 0$: \emph{vanishing of an island}, \\[0.2ex]
	\hspace*{1.9cm} $\frac{\partial \phi}{\partial t}(x_c,t_c) < 0$: \emph{creation of an island}.\\[-1.5ex]
	\item $\lambda_1<0<\lambda_2$,  $\frac{\partial \phi}{\partial t}(x_c,t_c) > 0$:  \emph{domain splitting}, \\[0.2ex]
	\hspace*{1.9cm} $\frac{\partial \phi}{\partial t}(x_c,t_c) < 0$: \emph{domain merging}.\\[-1.5ex]
	\item $\lambda_1\le\lambda_2<0$,  $\frac{\partial \phi}{\partial t}(x_c,t_c) > 0$:  \emph{creation of a hole}, \\[0.2ex]
	\hspace*{1.9cm} $\frac{\partial \phi}{\partial t}(x_c,t_c) < 0$: \emph{vanishing of a hole}.\\[-1.5ex]
\end{itemize}
The case $d=3$:\smallskip
\begin{itemize}
	\item $0<\lambda_1\le\lambda_2\le\lambda_3$,   $\frac{\partial \phi}{\partial t}(x_c,t_c) > 0$: \emph{vanishing of an island}, \\[0.2ex]
	\hspace*{2.8cm} $\frac{\partial \phi}{\partial t}(x_c,t_c) < 0$: \emph{creation of an island}.\\[-1.5ex]
	\item $\lambda_1<0<\lambda_2\le\lambda_3$,  $\frac{\partial \phi}{\partial t}(x_c,t_c) > 0$:  \emph{domain splitting}, \\[0.2ex]
	\hspace*{2.8cm} $\frac{\partial \phi}{\partial t}(x_c,t_c) < 0$: \emph{domain merging}.\\[-1.5ex]
	\item  $\lambda_1\le\lambda_2<0<\lambda_3$,  $\frac{\partial \phi}{\partial t}(x_c,t_c) > 0$: \emph{creation of a hole through the domain}, \\[0.2ex]
	\hspace*{2.8cm} $\frac{\partial \phi}{\partial t}(x_c,t_c) < 0$: \emph{vanishing of a hole through the domain}.\\[-1.5ex]
	\item $\lambda_1\le\lambda_2\le\lambda_3<0$,  $\frac{\partial \phi}{\partial t}(x_c,t_c) > 0$: \emph{creation of an interior void}, \\[0.2ex]
	\hspace*{2.8cm} $\frac{\partial \phi}{\partial t}(x_c,t_c) < 0$: \emph{vanishing of an interior void}.\\[-1.5ex]
\end{itemize}      
For $d=2$, a domain splitting scenario is illustrated in Figure~\ref{fig1}.

\begin{figure}[ht!] 
	\begin{center} 
		\includegraphics[width=0.45\textwidth]{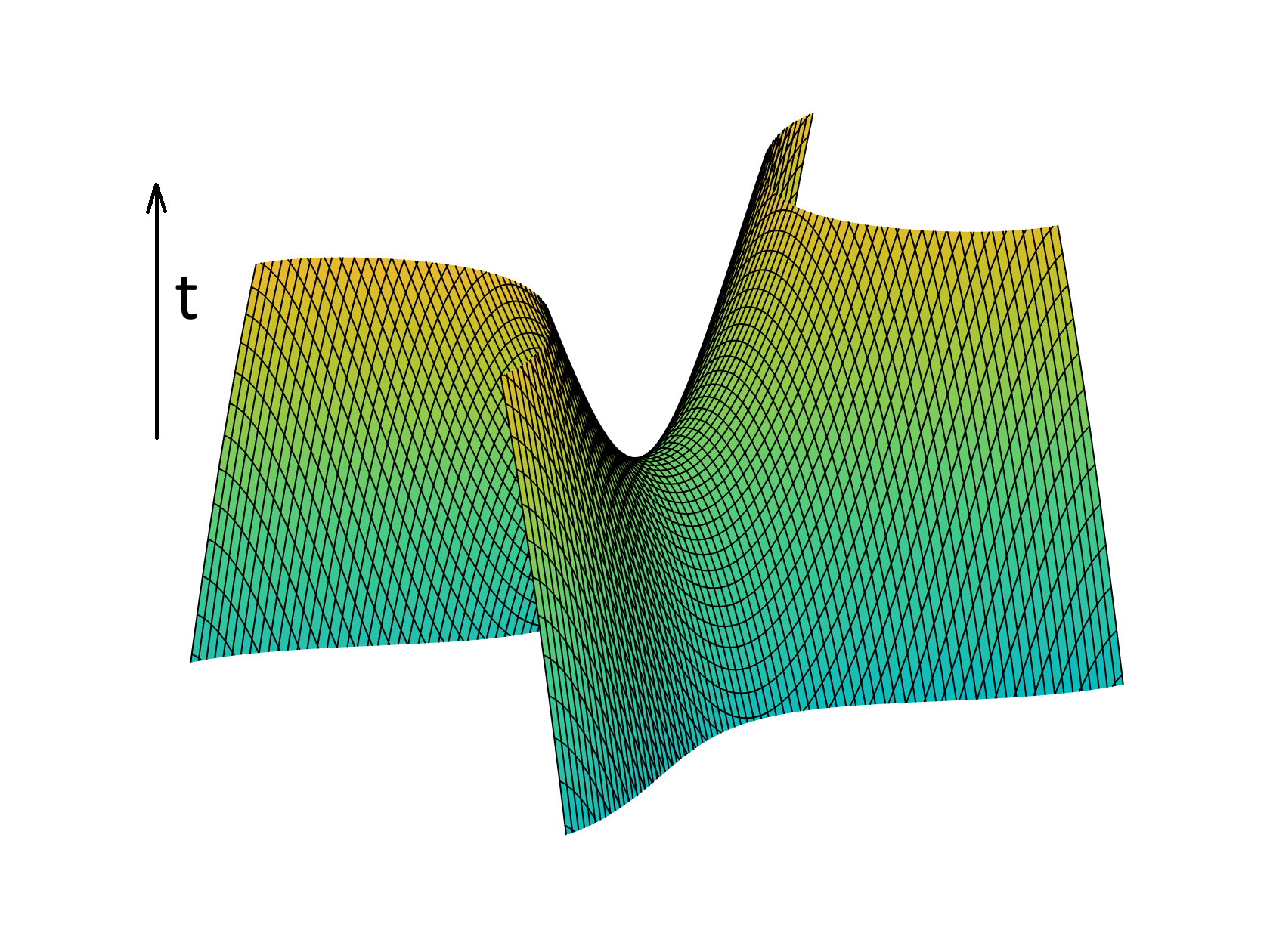} 
		\includegraphics[width=0.4\textwidth]{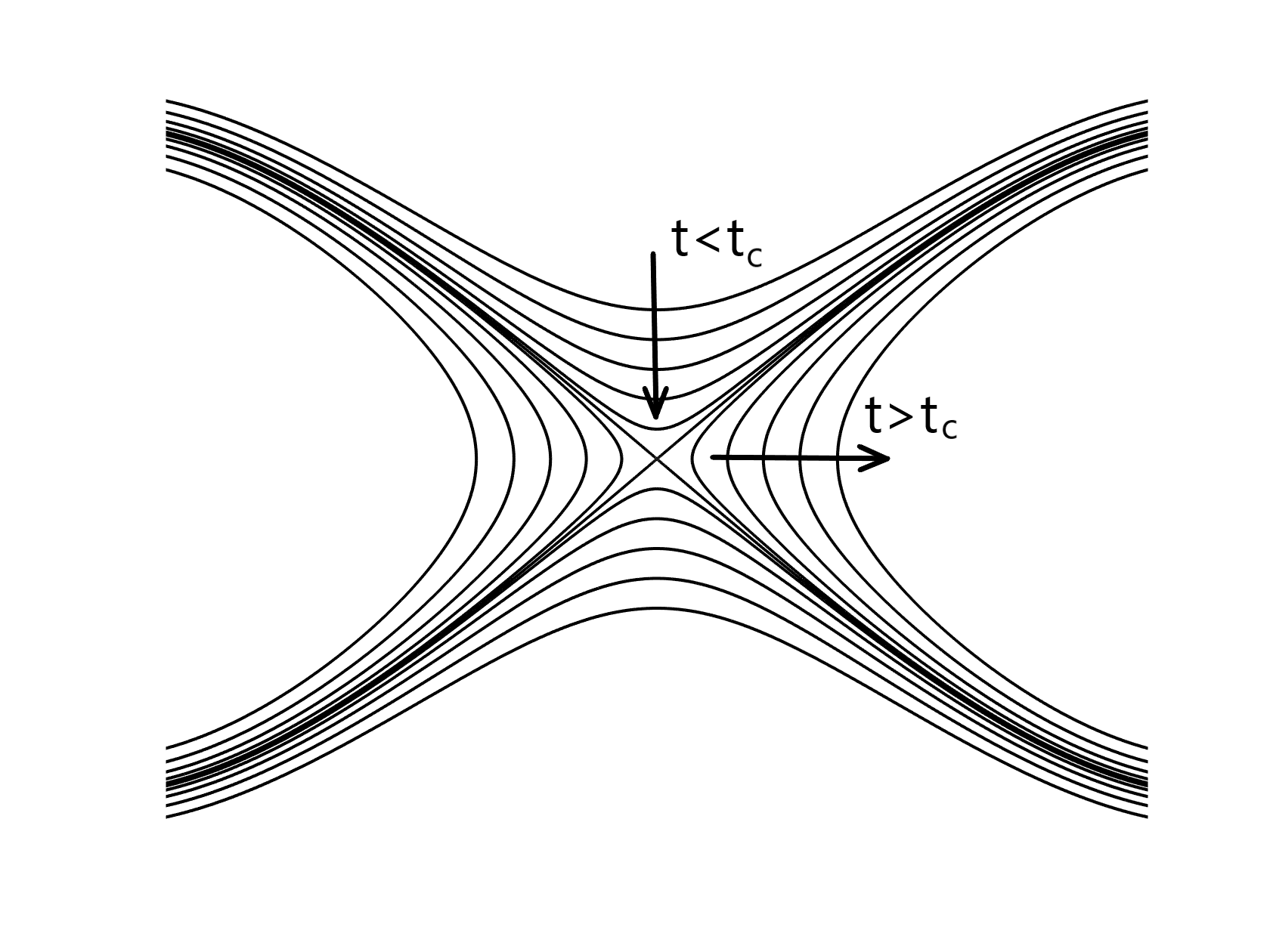} 
	\end{center} 
	\vskip-1.5em 
	\caption{$d=2$. A splitting scenario with $\lambda_1<0<\lambda_2$,  $\frac{\partial \phi}{\partial t}(x_c,t_c) > 0$. The left plot shows $\GQ$ near $(x_c,t_c)$, while the right shows snapshots of $\Gamma(t)$ near $(x_c,t_c)$.} \label{fig1} 
\end{figure}

From \eqref{critpoint3} and the expression $V_\Gamma=-\frac{\partial \phi}{\partial t} \frac{1}{|\nabla \phi|}$, we find that at an isolated critical point $(x_c,t_c)$,
\begin{equation} 
	\label{eq8} \lim_{\small \begin{matrix} (x,t) \in \GQ^-  \\ (x,t) \to (x_c,t_c) \end{matrix}} |V_\Gamma (x,t)|= \infty. 
\end{equation}

It is quite straightforward to verify that for sufficiently smooth $\phi$,  the cases classified as domain splitting, hole creation, and vanishing of an island correspond to a weakly singular point, cf. Definition~\ref{defweak}.  However, we want to be more general and check whether Assumptions~\ref{Ass1}--\ref{Ass4} can be verified for a critical point of a level set function $\phi$ which is only  $C^2$ smooth (condition~\eqref{eq:smooth}). Such results are presented in Section~\ref{subsec42}.

\subsection{The case of $\frac{\partial \phi}{\partial t}(x_c,t_c) > 0$} \label{subsec42}.
For the classification cases with  $\frac{\partial \phi}{\partial t}(x_c,t_c) > 0$ we derive some positive results.
\begin{lemma} \label{lemLevelset}
Assume \eqref{eq:smooth}  and let $(x_c,t_c)$ be an isolated critical point with $\frac{\partial \phi}{\partial t}(x_c,t_c) > 0$. Then the Assumptions~\ref{Ass1} and \ref{Ass3} are satisfied.
\end{lemma}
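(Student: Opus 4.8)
The plan is to verify Assumptions~\ref{Ass1} and~\ref{Ass3} directly from the level set representation, using the level set function $\phi$ itself in the role played by the mollified signed-distance surrogate $\phi=\phi_0\circ\Phi_1^{-1}$ in the proof of Lemma~\ref{lemtrace}. This sidesteps the construction of the diffeomorphisms $\Phi_i$ (whose $C^2$-regularity is not guaranteed from a merely $C^2$ level set function) and is precisely where the $C^2$ level set framework is more convenient. The starting point is the identity $V_\Gamma = -\frac{\partial \phi}{\partial t}\,\frac{1}{|\nabla\phi|}$ on $\GQ^-$. Since $\frac{\partial \phi}{\partial t}(x_c,t_c)>0$ and $\frac{\partial \phi}{\partial t}$ is continuous, there is a $\delta>0$ with $\frac{\partial \phi}{\partial t}>0$ on the space--time ball $B_\delta:=B_\delta(x_c,t_c)$; because $\nabla\phi\neq 0$ on $\GQ^-$ by \eqref{critpoint}, this forces $V_\Gamma<0$ on $\GQ^-\cap B_\delta$. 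Thus the blow-up \eqref{eq8} occurs only towards $-\infty$, so $[V_\Gamma]_+$ vanishes near the critical point. This is exactly the sign condition \eqref{weakly}, and it is the structural fact on which both assumptions rest.

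For Assumption~\ref{Ass1} I would split $\GQ^- = (\GQ\setminus B_\delta)\cup(\GQ^-\cap B_\delta)$. On the compact set $\overline{\GQ\setminus B_\delta}$, which does not contain $(x_c,t_c)$, continuity of $\nabla\phi$ together with \eqref{critpoint} yields a uniform bound $|\nabla\phi|\geq c_1>0$, while $|\frac{\partial \phi}{\partial t}|$ is bounded by the $C^2$-regularity \eqref{eq:smooth}; hence $|V_\Gamma|\leq M$ there for some $M$. On $\GQ^-\cap B_\delta$ the first paragraph gives $[V_\Gamma]_+=0$. Therefore $\|[V_\Gamma]_+\|_{L^\infty(\GQ)}\leq M<\infty$, which is \eqref{ass1}.

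For Assumption~\ref{Ass3} I would re-run the tubular-neighborhood trace argument of Lemma~\ref{lemtrace} with $\phi$ itself as the defining function. Fix $t$ and set $\tilde\Gamma:=\Gamma(t)\setminus B_\delta$. By the sign condition, $\Gp(t)\subset\tilde\Gamma$, so it suffices to control $\|u\|_{L^2(\tilde\Gamma)}$. On a fixed space--time tubular neighborhood of $\GQ$ with $B_{\delta/2}$ removed, the bounds $|\nabla\phi|\geq c_1$ and $\|\nabla^2\phi\|_2\leq c_2$ hold uniformly in $t$ by compactness and $C^2$-regularity, which are exactly the hypotheses \eqref{h7}; since $\phi<0$ inside $\Omega(t)$, the inward normal tube $U_\eps$ of $\tilde\Gamma$ of a $t$-independent width $\eps$ lies in $\overline{\Omega(t)}$. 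The Taylor expansion together with the coarea estimate from the proof of Lemma~\ref{lemtrace} then gives $\|u\|_{L^2(\Gp(t))}^2\leq\|u\|_{L^2(\tilde\Gamma)}^2\leq C\,\|u\|_{\Omega(t)}\|u\|_{H^1(\Omega(t))}$ with $C$ independent of $t$, which is \eqref{ass3}.

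The main obstacle is the uniformity in $t$ in this last step, and it is purely geometric: as $|t-t_c|\uparrow\delta$ the truncated interface $\tilde\Gamma$ may approach $x_c$ spatially even though $\Gamma(t)$ loses uniform $C^2$ control near the critical point. The remedy is to truncate with the \emph{space--time} ball $B_\delta$ rather than a spatial ball, so that points of $\tilde\Gamma$ stay at space--time distance $\geq\delta$ from $(x_c,t_c)$; choosing the tube width $\eps$ small relative to $\delta$ then keeps $U_\eps$ inside the region $\overline{\GQ\setminus B_{\delta/2}}$ on which $c_1,c_2$ are uniform. The case $t\in\cI_2$ is identical, and I would emphasize that the argument never uses nondegeneracy of the critical point, so only \eqref{eq:smooth} and \eqref{critpoint} are needed.
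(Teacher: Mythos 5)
Your proposal is correct and takes essentially the same route as the paper's own proof: the sign condition $V_\Gamma<0$ on $\GQ^-\cap B_\delta$ deduced from $\frac{\partial \phi}{\partial t}(x_c,t_c)>0$ and continuity yields Assumption~\ref{Ass1} via compactness of $\GQ\setminus B_\delta$, and Assumption~\ref{Ass3} follows by re-running the tubular-neighborhood argument of Lemma~\ref{lemtrace} with $\phi(\cdot,t)$ itself as the defining function, truncated by the space--time ball $B_\delta$ so that \eqref{h7} holds uniformly in $t$. Even your motivation for bypassing the diffeomorphisms $\Phi_i$ matches the paper's own reasoning, cf.\ Remark~\ref{RemPhi}.
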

\begin{proof}
 Note that for $(x,t) \in \GQ^-$ we have 
\[
V_\Gamma=- \frac{\partial \phi}{\partial t}\frac{1}{|\nabla \phi|}
\]
 and thus  $V_\Gamma(x,t) < 0$ iff $\frac{\partial \phi}{\partial t}(x,t) > 0$. 
For   $ \phi \in C^2(\R^d \times [0,T])$ the condition $\frac{\partial \phi}{\partial t}(x_c,t_c) > 0$ in a critical point $(x_c,t_c)$ implies that there exists a ball $B_\delta=B_\delta\big((x_c,t_c)\big)$ with $\delta >0$ such that
\begin{equation} \label{h5} \frac{\partial \phi}{\partial t} > 0 \quad \text{on}~~\Gamma_\cQ^- \cap B_\delta.
\end{equation}
We can now use essentially the same arguments as in the proof of Lemma~\ref{Cor1}:
\[
\|[V_\Gamma]_+\|_{L^\infty(\GQ)} \leq \|V_\Gamma\|_{L^\infty(\GQ \setminus B_\delta)} = \left\|\frac{\partial \phi}{\partial t}\frac{1}{|\nabla \phi|}\right\|_{L^\infty(\GQ \setminus B_\delta)} < \infty,
\]
and thus Assumption~\ref{Ass1} is satisfied. To show that Assumption~\ref{Ass3} is satisfied, we use the proof of Lemma~\ref{lemtrace}, which is based on level set function arguments.  We split the level set evolution in two smooth parts $t \in \cI_1 \cup \cI_2$, $\cI_1:=[0,t_c)$, $\cI_2:=(t_c,T]$. For given $t \in \cI_1$ we write $\phi(x)=\phi(x,t)$, $\tilde \Gamma=\Gamma(t) \setminus B_\delta$, with $B_\delta$ as in \eqref{h5}. Then \eqref{h7} is satisfied and all the following arguments in the proof of Lemma~\ref{lemtrace} apply.\\
\end{proof}

\begin{remark} \label{RemPhi} \rm  An alternative approach for proving Lemma~\ref{lemLevelset} is to directly apply the results derived in the more general setting in Section~\ref{Sectnonsmooth}. If we show that for a level set function $\phi$ that satisfies  \eqref{eq:smooth}   an isolated critical point $(x_c,t_c)$ with $\frac{\partial \phi}{\partial t}(x_c,t_c) > 0$ is a weak singular point as in Definition~\ref{defweak}, then  Lemma~\ref{lemLevelset} follows as a corollary of the results derived in Section~\ref{Sectnonsmooth}.  However, this would require constructing suitable diffeomorphisms $\Phi_i$ from $\phi$, which may need a stronger smoothness assumption then \eqref{eq:smooth}. For this reason, we prefer the direct proof presented above. 
\end{remark}

\smallskip

Concerning Assumption~\ref{Ass4} we have  the following.

\begin{lemma} \label{LemAss4}
	Assume that $\phi$ satisfies \eqref{eq:smooth} and has one isolated critical point at which \eqref{critpoint3} holds. Then Assumption~\ref{Ass4} is satisfied.  The lateral boundary  $\GQ$ even possesses $C^2$ smoothness.
\end{lemma}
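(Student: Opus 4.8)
The claim has two parts: that $\cQ$ is Lipschitz (Assumption~\ref{Ass4}) and that $\GQ$ is in fact $C^2$-smooth as a $d$-dimensional hypersurface in $\R^{d+1}$. Since $C^2$-smoothness of the boundary implies the Lipschitz property, the natural plan is to prove the stronger $C^2$ statement directly, away from and at the critical point separately, and then note that Assumption~\ref{Ass4} follows.

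Let me think about what $\GQ$ looks like. It's the zero level set of $\phi$ in space-time: $\GQ = \{(x,t) \in \R^{d+1} : \phi(x,t)=0\}$ (intersected with the relevant time slab). So $\GQ$ is a level set of a $C^2$ function $\Phi(x,t) := \phi(x,t)$ on $\R^{d+1}$. The standard implicit function / regular-value theorem says: if the full space-time gradient $\nabla_{x,t}\Phi = (\nabla_x\phi, \partial_t\phi)$ is nonzero at every point of the level set, then the level set is a $C^2$ hypersurface. So the whole lemma reduces to checking that this $(d+1)$-dimensional gradient never vanishes on $\GQ$.

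Away from the critical point this is immediate: by the definition of critical point \eqref{critpoint}, $\nabla_x\phi \neq 0$ on $\GQ^-$, so a fortiori the space-time gradient is nonzero there. At the critical point itself, $\nabla_x\phi(x_c,t_c)=0$, so the spatial part of the gradient is useless — but this is exactly where condition \eqref{critpoint3}, $\partial_t\phi(x_c,t_c)\neq 0$, saves the day: the time-component of the space-time gradient is nonzero, hence $\nabla_{x,t}\Phi(x_c,t_c) = (0,\partial_t\phi(x_c,t_c)) \neq 0$. Therefore the space-time gradient is nonzero at *every* point of $\GQ$, and by the $C^2$ implicit function theorem $\GQ$ is a $C^2$ hypersurface.

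So the plan, concretely, is:

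First, I would write $\GQ$ as the zero set of the $C^2$ function $\phi(\cdot,\cdot)$ on $\R^d\times[0,T]$ and observe that we are asking whether $0$ is a regular value of $\phi$ with respect to the full space-time variable. Second, I would verify $\nabla_{x,t}\phi \neq 0$ on all of $\GQ$ by the two-case argument above: $\nabla_x\phi\neq 0$ off the critical point from \eqref{critpoint}, and $\partial_t\phi(x_c,t_c)\neq 0$ from \eqref{critpoint3} at the critical point. Third, I would apply the implicit function theorem (in its $C^2$ version, justified by $\phi\in C^2$) to conclude that near each point of $\GQ$ the set is the $C^2$ graph of one coordinate over the remaining $d$, which is precisely the statement that $\GQ$ is a $C^2$-smooth embedded hypersurface. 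Fourth, I would note that a $C^2$ (hence $C^{0,1}$) boundary gives a Lipschitz domain, so Assumption~\ref{Ass4} holds. I would also want to handle the boundary-in-time faces $\{t=0\}$ and $\{t=T\}$ and the edges where $\GQ$ meets them, but since these are smooth transverse intersections of smooth pieces, they only contribute Lipschitz corners, which is harmless for the Lipschitz conclusion.

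The main obstacle, and the only place the argument is subtle rather than routine, is the behavior of $\GQ$ precisely at the critical point $(x_c,t_c)$. One must be careful that ``$\GQ$ is smooth there'' is genuinely true and not contradicted by the topological change: the topological transition (splitting, etc.) happens in the *spatial slices* $\Gamma(t)$, which degenerate at $t_c$ — indeed $V_\Gamma$ blows up by \eqref{eq8} — yet the *space-time* set $\GQ$ passes smoothly through the critical point because the transition is traded for a nonzero time-derivative. The key conceptual point to make clear is that smoothness of $\GQ$ as a space-time hypersurface is compatible with, and in fact responsible for, the blow-up of the slice-wise normal velocity: the normal to $\GQ$ tilts toward the time axis as one approaches $(x_c,t_c)$, so that although $\GQ$ stays $C^2$, its projection onto the spatial directions becomes singular. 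I would make this remark explicitly, since it is the geometric heart of why a smoothly evolving level set can implicitly encode a genuine topological change, and it reconciles the $C^2$ conclusion of this lemma with \eqref{eq8}.
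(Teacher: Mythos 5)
Your proposal is correct and takes essentially the same route as the paper: both verify that the space--time gradient $\nabla_{(x,t)}\phi=(\nabla\phi,\frac{\partial\phi}{\partial t})^T$ is nonzero everywhere on $\GQ$ (spatial gradient nonzero off the critical point by \eqref{critpoint}, $\frac{\partial\phi}{\partial t}(x_c,t_c)\neq 0$ at it by \eqref{critpoint3}) and invoke the implicit function theorem to get $C^2$ smoothness of $\GQ$. The only additional point the paper makes explicit, which you cover implicitly via your transversality remark, is that $t_c\in(0,T)$ guarantees $|\nabla\phi|\ge c>0$ on $\Gamma(0)\cup\Gamma(T)$, so the corners where $\GQ$ meets the time slices are non-tangential and $\cQ$ is Lipschitz.
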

\begin{proof}
	It follows from  $\nabla_{(x,t)}\phi= (\nabla \phi, \frac{\partial \phi}{\partial t})^T$ and the implicit function theorem that $\GQ$ is $C^2$ smooth.
	Since $t_c\in(0,T)$ implies $|\nabla\phi| \ge c> 0$ on $\Gamma(0)\cup\Gamma(T)$,  $\cQ$ is Lipschitz. 
\end{proof}
\smallskip

The results above can be summarized as follows.

\smallskip
\begin{corollary} Assume \eqref{eq:smooth} holds.
For   a topological change  in the level set domain characterized by a  critical point with $\frac{\partial \phi}{\partial t}(x_c,t_c) > 0$, all the assumptions from Section~\ref{Sectdomains} are satisfied. If the critical point is  nondegenerate, these scenarios include domain splitting, cf. Figure~\ref{fig1}, the vanishing of an island, and the creation of a hole.
\end{corollary}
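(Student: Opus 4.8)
The plan is to treat this corollary as a bookkeeping result that assembles the four structural assumptions of Section~\ref{Sectdomains} from the lemmas already proved in this section, and then reads off the named scenarios from the classification table. Since each assumption has been (or can be) reduced to an earlier statement, I would verify them one at a time and invoke nondegeneracy only for the final classification sentence. Throughout I read ``a topological change characterized by a critical point'' as the isolated-critical-point hypothesis \eqref{critpoint}, so that the hypotheses of Lemmas~\ref{lemLevelset} and~\ref{LemAss4} are literally met.

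First I would dispose of Assumption~\ref{Ass0}, the one assumption not re-proved explicitly in this section. Here I would simply invoke the observation recorded just after \eqref{subzero}: since \eqref{eq:smooth} gives $\phi \in C^2(\R^d \times [0,T])$, in particular $\phi$ is continuous, and for a continuous level set function the sublevel sets converge in measure as $t \to t_c^\pm$; taking $\Omega_c := \{\,x : \phi(x,t_c) < 0\,\}$ yields $\mathrm{meas}_d(\Omega_c \triangle \Omega(t_c \pm \delta)) \to 0$, so Assumption~\ref{Ass0} holds.

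Next I would handle the remaining three assumptions by citation. The condition $\frac{\partial \phi}{\partial t}(x_c,t_c) > 0$ is exactly the standing hypothesis of Lemma~\ref{lemLevelset}, so Assumptions~\ref{Ass1} and~\ref{Ass3} follow immediately from that lemma. For Assumption~\ref{Ass4} I would note that $\frac{\partial \phi}{\partial t}(x_c,t_c) > 0$ in particular gives \eqref{critpoint3}, so Lemma~\ref{LemAss4} applies and $\cQ$ is Lipschitz (indeed $\GQ$ is $C^2$). This completes the first assertion of the corollary.

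For the second assertion I would add the nondegeneracy hypothesis \eqref{critpoint2} and appeal to the classification derived via the Morse normal form of Lemma~\ref{L:Morse} together with the inertia identity \eqref{Inertia}: among the nondegenerate cases, those with $\frac{\partial \phi}{\partial t}(x_c,t_c) > 0$ are precisely domain splitting (a saddle, e.g. $\lambda_1 < 0 < \lambda_2$ for $d=2$, cf. Figure~\ref{fig1}), the vanishing of an island (all eigenvalues positive), and the creation of a hole (all eigenvalues negative, with the additional mixed-sign hole-through-the-domain case when $d=3$). I expect no genuine obstacle, since all analytic content resides in the cited lemmas; the only point demanding care is to make explicit that ``critical point with $\frac{\partial \phi}{\partial t} > 0$'' is read as an isolated critical point in the sense of \eqref{critpoint}, and that this sign condition is precisely what feeds Lemma~\ref{lemLevelset} (via $V_\Gamma < 0$ near $(x_c,t_c)$) and, through \eqref{critpoint3}, Lemma~\ref{LemAss4}.
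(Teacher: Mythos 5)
Your proposal is correct and takes essentially the same route as the paper, which states the corollary as a direct summary of the preceding results: Assumptions~\ref{Ass1} and~\ref{Ass3} from Lemma~\ref{lemLevelset}, Assumption~\ref{Ass4} from Lemma~\ref{LemAss4} (since $\frac{\partial \phi}{\partial t}(x_c,t_c)>0$ implies \eqref{critpoint3}), Assumption~\ref{Ass0} from the continuity remark following \eqref{subzero}, and the named scenarios read off the classification list derived from Lemma~\ref{L:Morse} and \eqref{Inertia}. Your added detail --- spelling out the measure convergence behind Assumption~\ref{Ass0} and reading the hypothesis as an isolated critical point in the sense of \eqref{critpoint} --- merely makes explicit what the paper leaves implicit.
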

 
 \smallskip
\begin{remark}[Degenerate critical point]\rm
	Note that in Lemmas~\ref{lemLevelset}--\ref{LemAss4} we do not need the assumption \eqref{critpoint2} that the critical point is nondegenerate. This assumption is needed in the derivation of the classification based on Lemma~\ref{L:Morse}. We are not aware of an analogue to the classification Lemma~\ref{L:Morse} for degenerate critical points.
	
Although it does not follow from the  classification Lemma~\ref{L:Morse},
 a topological change may happen 
at an isolated \emph{degenerate} critical point.  
The following example of  a
 level set function  $\phi$ defined in a neighborhood of the degenerate critical point $x_c=(0,0)$, $t_c=0$ corresponds to  a domain splitting: 
\[
\phi(x,t) = x_2^2-|x_1|^{2p}+t\quad   \text{in}~~\cO(x_c,t_c),\quad p>1.
\]  
  See Figure~\ref{fig2} for a visualization with $p=3$. The case visually resembles the pinchoff  of a viscous drop~\cite{shi1994cascade,eggers1997nonlinear}, when the drop develops a  neck near the point of breakup.
  \end{remark}
\begin{figure}[h]
	\begin{center}
		\includegraphics[width=0.45\textwidth]{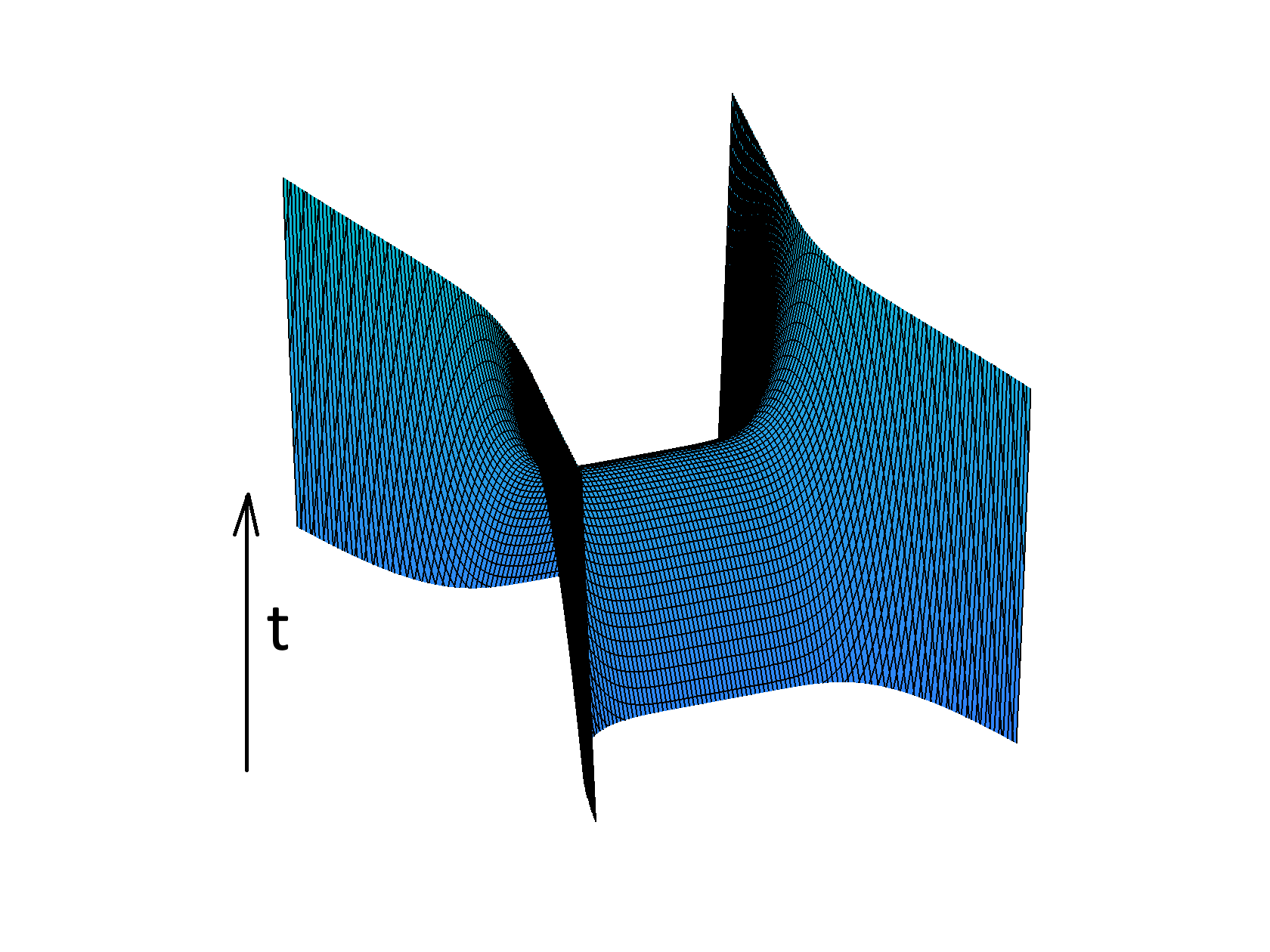}
		\includegraphics[width=0.4\textwidth]{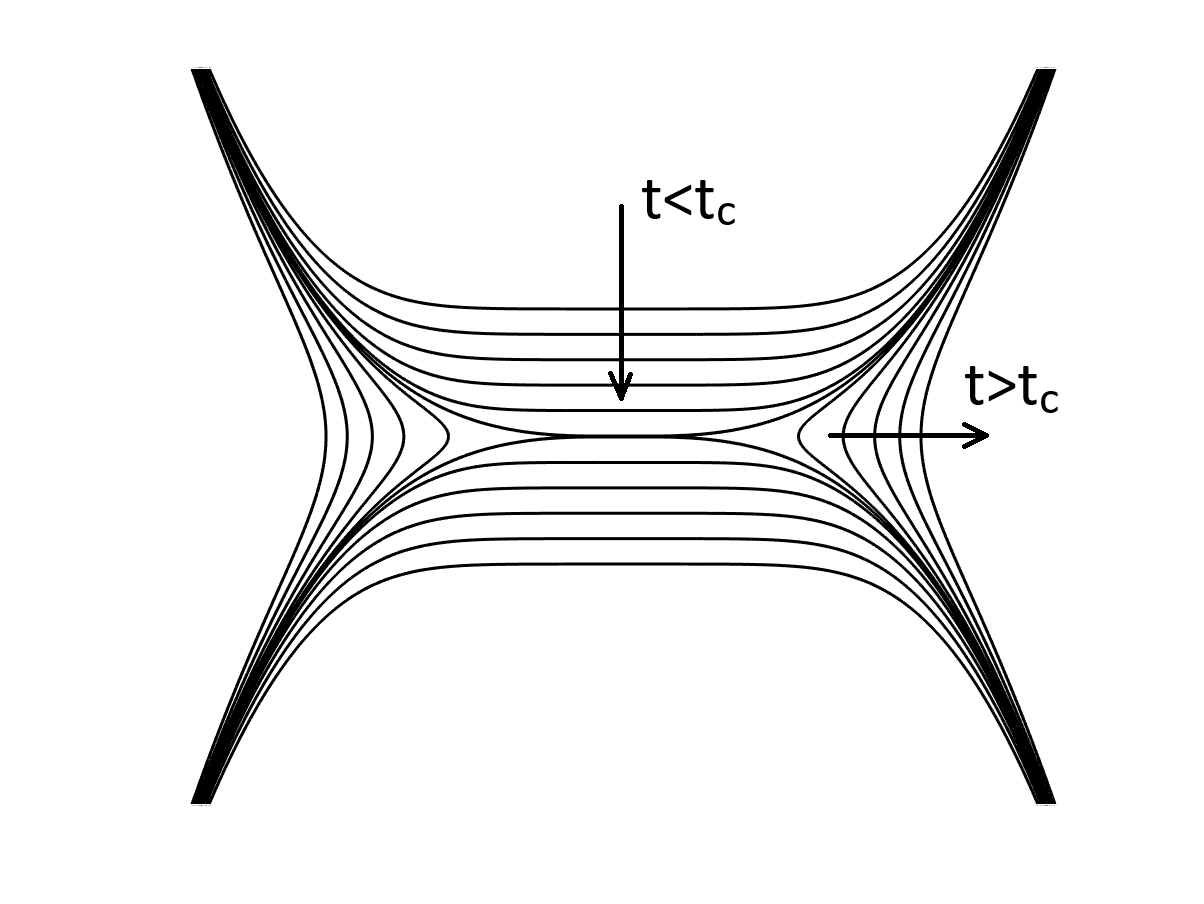}
	\end{center}
	\caption{A splitting scenario with $\phi(x,t) =-|x_1|^{6} +x_2^2+t$ in $\cO(x_c,t_c)$, corresponding to a degenerate critical point. The left plot visualizes $\GQ$, while the right plot shows snapshots of $\G(t)$. Both in a neighborhood of $(x_c,t_c)$.} \label{fig2}
\end{figure}

\subsection{A counterexample to \eqref{eq2b}} 
\rev{  
We reconsider the critical estimate \eqref{eq2b}, specifying the function space for
$u$ appearing in \eqref{eq2b}. For a fixed (sufficiently small) $\delta >0$, define
$I_\delta:=[t_c-\delta, t_c+\delta]$ and assume $\Omega^\ast \subset \R^d$ such that
$\Omega(t) \subset \Omega^\ast$ for all $t \in I_\delta$. For
$t \in I_\delta \setminus \{t_c\}$, let $C_0(t) <\infty$ be the best constant such that
\begin{equation} \label{eq2bagain}
	\frac{d}{dt}\|u\|_{\Omega(t)}^2 \leq {C_0(t)}\|u\|_{\Omega(t)}\|u\|_{H^1(\Omega(t))} 
	\quad \text{for all}~u \in H^1(\Omega^\ast).
\end{equation}
From Lemma~\ref{lemLevelset} and the estimate \eqref{eq2} we have that,
if $(x_c,t_c)$ is an isolated critical point with
$\frac{\partial \phi}{\partial t}(x_c,t_c) > 0$, then
$\sup_{t \in I_\delta} C_0(t) < \infty$ holds.

We now investigate the case of a non-degenerate critical point with
$\frac{\partial \phi}{\partial t}(x_c,t_c) < 0$. In this case,
$[V_\Gamma]_+ = V_\Gamma$ in a neighborhood of $(x_c,t_c)$, and hence
$[V_\Gamma(x,t)]_+ \to +\infty$ as $(x,t) \to (x_c,t_c)$, cf.~\eqref{eq8}. We show that
in \eqref{eq2bagain} we then have $\lim_{t \to t_c} C_0(t)=\infty$. \\[1ex]
We distinguish the different scenarios in the classification, cf.~1), 2), and 3) below.\medskip

1) \emph{Creation of an island} ($d=2,3$). Consider the level set function
$\phi(x,t)=|x|^2-t$ with corresponding domain
$\Omega(t)=\{\, x \in \R^d \,|\, |x|\leq t^\frac12\,\}$,
$0=t_c < t \leq 1$, with normal velocity $V_\Gamma=\tfrac12 |x|^{-1}$. For the constant
function $u=1$ we have
$\|u\|_{\Omega(t)}\|u\|_{H^1(\Omega(t))}=\|1\|_{\Omega(t)}^2=|\Omega(t)| \sim t^{\frac{d}2}$,
and
\[
\frac{d}{dt}\|u\|_{\Omega(t)}^2
= \int_{\Gamma(t)} V_\Gamma\cdot 1 \, ds
= \tfrac12 t^{-\frac12} |\Gamma(t)|
\sim t^{\frac{d}2-1}.
\]
Hence, for the constant from \eqref{eq2bagain} we have $C_0(t) \gtrsim t^{-1}$, and so it
blows up for $t \downarrow 0$. This blow-up corresponds to the unbounded normal velocity
for $(x,t)\to (x_c,t_c)=(0,0)$ and a blow-up of $C(t)$ in the trace inequality
$\|u\|_{\Gamma(t)}^2 \leq C(t)\|u\|_{\Omega(t)}\|u\|_{H^1(\Omega(t))}$.
\\[0.5ex]
2) \emph{Vanishing of a hole} ($d=2,3$) \emph{or of an interior void} ($d=3$).
Consider the level set function $\phi(x,t)=-|x|^2-t$ with corresponding domain
$\Omega(t)=\{\, x \in \R^d \,|\, |t|^\frac12 \leq |x|\leq \frac12\,\}$,
$-\tfrac12 \leq t \leq t_c=0$, with normal velocity $V_\Gamma=\tfrac12 |x|^{-1}$ and
$\Omega^\ast:=\Omega(0)$. For $d=2$ we take the function
$u(x)=\ln \ln(|x|^{-1})$. Note that $u$ does not depend on $t$ and we have
$u \in H^1(\Omega^\ast)$ with 
$\|u\|_{\Omega(t)}\|u\|_{H^1(\Omega(t))} \leq c$, for a suitable constant $c$
independent of $t$. On the other hand, we have
\[
\frac{d}{dt}\|u\|_{\Omega(t)}^2
= \int_{\Gamma(t)} V_\Gamma u^2 \, ds
= \tfrac12 |t|^{-\frac12} \ln^2 \ln (|t|^{-\frac12}) |\Gamma(t)|
= \pi \ln^2 \ln (|t|^{-\frac12}).
\]
Hence, we have blow-up of $C_0(t)$ for $t\uparrow 0$. It also provides a counterexample
for a vanishing hole through a three-dimensional domain.
\\[0.5ex]
For a vanishing interior void ($d=3$) we were not able to find a fixed
function $u \in H^1(\Omega^\ast)$ for which $C_0(t)$ in \eqref{eq2bagain} blows up. We
can, however, construct a parameterized subset
$\{u_\epsilon\}_{\epsilon >0} \subset H^1(\Omega^\ast)$ for which blow-up occurs. Define,
for $0 < \epsilon < \tfrac12$,
$u_\epsilon(x):=|x|^{-1}$ if $|x| \geq \epsilon$ and $u_\epsilon= \epsilon^{-1}$ if
$|x| \leq \epsilon$. One verifies
$\|u_\epsilon\|_{\Omega(t)}\leq \|u_\epsilon\|_{\Omega^\ast} \leq 1$,
$\|\nabla u_\epsilon\|_{\Omega(t)} \leq \|\nabla u_\epsilon\|_{\Omega^\ast}
\lesssim \epsilon^{-\frac12}$. We thus have
$\|u_\epsilon\|_{\Omega(t)}\|u_\epsilon\|_{H^1(\Omega(t))} \leq c\epsilon^{-\frac12}$
for a suitable constant $c$, independent of $t$ and $\epsilon$. At the same time, for
$|t| \leq \epsilon^2$ we have
\[
\frac{d}{dt}\|u_\epsilon\|_{\Omega(t)}^2
= \int_{\Gamma(t)} V_\Gamma u_\epsilon^2 \, ds
= 2 \pi |t|^\frac12\epsilon^{-2}
=:C(\epsilon,t).
\]
We see that $\lim_{\epsilon \downarrow 0} \epsilon^\frac12 C(\epsilon, \epsilon^2)=\infty$,
and thus we have blow-up of $C_0(t)$ in \eqref{eq2bagain}.
\\[0.5ex]
3) \emph{Domain merging} ($d=2$). Consider the level set function
$\phi(x,t)=-x_1^2 +x_2^2-t$ with corresponding domain
$\Omega(t)=\{\, x \in \R^2 \,|\,  \sqrt{x_2^2-t} \leq |x_1| \leq 1 \,\}$,
$-\tfrac12<t<t_c=0$, with normal velocity $V_\Gamma=\tfrac12 |x|^{-1}$. Note $\Omega(t) \subset \Omega^\ast:=[-1,1]^2$. The part of
$\partial \Omega(t)$ that is evolving is denoted by
$\Gamma(t):=\{\, x \in \partial \Omega(t)\,|\, x_1 \notin \{-1,1\}\,\}$. We take the
constant function $u=1$. For this we have
$\|u\|_{\Omega(t)}\|u\|_{H^1(\Omega(t))}=\|1\|_{\Omega(t)}^2 \leq 4 $. The
evolving boundary part $\Gamma(t)$ can be parameterized by
$G(x_2)=(\pm \sqrt{x_2^2-t},x_2)$, $x_2 \in (-\sqrt{1+t},\sqrt{1+t})$, with $|G'(x_2)| \geq 1$. Thus we
obtain
\[
\begin{split}
	\frac{d}{dt}\|u\|_{\Omega(t)}^2
	& = \int_{\Gamma(t)} V_\Gamma\cdot 1 \, ds
	=2 \int_{-\sqrt{1+t}}^{\sqrt{1+t}} \tfrac12 |G(x_2)|^{-1} |G'(x_2)|\, dx_2
	\geq \int_{-\frac12 \sqrt{2}}^{\frac12 \sqrt{2}} \frac{1}{\sqrt{2x_2^2 +|t|}} \, dx_2 \\
	& \geq \frac{1}{\sqrt{2}}\int_{-\frac12 \sqrt{2}}^{\frac12 \sqrt{2}} \frac{1}{\sqrt{x_2^2 +|t|}} \, dx_2
	= \frac{1}{\sqrt{2}} \ln \left( \frac{\left(\frac12 \sqrt{2}+\sqrt{\frac12+|t|}\right)^2}{|t|}\right)
	\sim c \, \ln \left(\frac{\sqrt{2}}{|t|}\right)
	\quad \text{for}~|t| \downarrow 0.
\end{split}
\]
Hence, we have a blow-up of $C_0(t)$ for $|t| \downarrow 0$.
\\[0.3ex]
\emph{Domain merging} ($d=3$). Consider the level set function
$\phi(x,t)=-x_1^2 +x_2^2+ x_3^2 -t$ with corresponding domain
$\Omega(t)=\{\, x \in \R^3 \,|\,  \sqrt{x_2^2+x_3^2-t} \leq |x_1| \leq 1 \,\}$,
$-\tfrac12<t<t_c=0$, with normal velocity $V_\Gamma=\tfrac12 |x|^{-1}$. Note $\Omega(t) \subset \Omega^\ast:=[-1,1]^3$. The part of
$\partial \Omega(t)$ that is evolving is denoted by
$\Gamma(t):=\{\, x \in \partial \Omega(t)\,|\, x_1 \notin \{-1,1\}\,\}$, which is the surface of revolution of $g(x_1):=\sqrt{x_1^2-|t|}$ around the $x_1$-axis for $x_1 \in [-1,-|t|^\frac12] \cup [|t|^\frac12, 1]$.  Now, for providing a counterexample, instead of a fixed 
$u \in H^1(\Omega^\ast)$ we again consider the above
parameterized family of functions $\{u_\epsilon\}_{\epsilon >0} \subset H^1(\Omega^\ast)$, satisfying
$\|u_\epsilon\|_{\Omega(t)}\|u_\epsilon\|_{H^1(\Omega(t))} \leq c\epsilon^{-\frac12}$. 
 For $|t| \leq \tfrac14\epsilon^2$  the surface of revolution restricted to $x_1 \in [-\epsilon,-|t|^\frac12] \cup [|t|^\frac12, \epsilon]$ is denoted by $\Gamma_\epsilon(t)$. For $x \in \Gamma_\epsilon(t)$ we have $|x|=(2x_1^2-|t|)^\frac12 \leq \sqrt{2}\epsilon$. Using this and the definition of $u_\epsilon$ we obtain, with suitable constants $c_i > 0$ independent of $\epsilon$ and $t$: 
\[ \begin{split}
\frac{d}{dt}\|u_\epsilon\|_{\Omega(t)}^2
& = \int_{\Gamma(t)} V_\Gamma u_\epsilon^2 \, ds
\ge \int_{\Gamma_\epsilon(t)} V_\Gamma u_\epsilon^2 \, ds
\ge c_1  \epsilon^{-3} |\Gamma_\epsilon(t)| \\ &  = c_2  \epsilon^{-3} \int_{|t|^\frac12}^\epsilon g(x_1) \left(1+g'(x_1)^2\right)^\frac12 dx_1 
 =  c_2 \epsilon^{-3} \int_{|t|^\frac12}^\epsilon \left(2 x_1^2 -|t|\right)^\frac12 dx_1 \geq c_3 \epsilon^{-1}.
\end{split} \]
So, for $|t| \leq \tfrac14\epsilon^2$ and $\epsilon \downarrow 0$ we have blow-up of $C_0(t)$ in
\eqref{eq2bagain}.

Thus, in the case of topological transitions of level set domains at a non-degenerate
critical point $(x_c,t_c)$ with
$\frac{\partial \phi}{\partial t}(x_c,t_c) < 0$, we lack suitable control over the
variation of $\|u\|_{\Omega(t)}^2$. Corresponding topological transitions include
domain merging (the time-reversed case of the one in Fig.~\ref{fig1}), the creation of
an island, and the vanishing of a hole and an internal void.

We emphasize that the arguments in this section do not exclude generic cases of domain
merging, island creation, etc., from the analysis in Section~\ref{Sectnonsmooth}—only
those involving non-degenerate critical points of smooth level set functions.

}

\subsection{Examples of non-smooth transition} \label{secnonsmooth} One can think of numerous scenarios involving non-smooth topological transitions, where by \textit{non-smooth} we mean that the lateral boundary of the space-time domain $\cQ$ is not a $C^2$ manifold.

For example, consider  two colliding balls that `preserve their shape' after the collision. For $d=2$ a specific example is as follows. Let $\phi_c(x)=(x_1-c)^2+x_2^2-1$ be a level set function for a unit ball with  center $(c,0)$. For $t \in (-1,1)$ we define  
\[
\phi(x,t) = \min\{\phi_{t-1}(x), \phi_{1 - t}(x)\}, \quad x \in \R^2.
\]  
For $t<0$ the domain $\Omega(t)$ is formed by the interior of two separated balls, for $t=0$ they touch and for $t>0$ the domain
$\Omega(t)$ is formed by the interior of the two overlapping balls. For $t >0$ there are two points $x_c$ where the boundaries of the two balls intersect. Due to the $\min$ operation the boundary of $\Omega(t)$ has only Lipschitz smoothness at these intersection points $x_c$. Hence, $\Omega(t)$ is only Lipschitz for $t > t_c=0$. This does not match the setting described in Section~\ref{Sectdomains} in which we assume $C^2$ smoothness of $\Omega(t)$ before and after the critical time $t_c$.

An example of a non-smooth transition that does fit in the framework of Section~\ref{Sectdomains} is the following.
Consider the level set function $\phi$:
\begin{equation} \label{aux410}
	\phi (x,t)=  
	\begin{cases}  
		|x_1| - x_2^2 + t, & \text{for } t < 0, \\  
		x_1^2 - x_2^4 + t^p, & \text{for } t > 0, \quad p \geq 1  
	\end{cases}  
	\qquad \text{in} \quad \cO(x_c, t_c), \quad (x_c,t_c)=(0,0).
\end{equation}
In this case the interface becomes smooth instantaneously after the domains touch, cf. Figure~\ref{fig3} for a visualization. One easily checks that the space--time domains  $\cQ_i$, $i=1,2$, before and after $t=t_c$  are $C^2$ smooth. At $t=t_c$, however, we only have Lipschitz smoothness.
\begin{figure}[h]
	\begin{center}
		\includegraphics[width=0.45\textwidth]{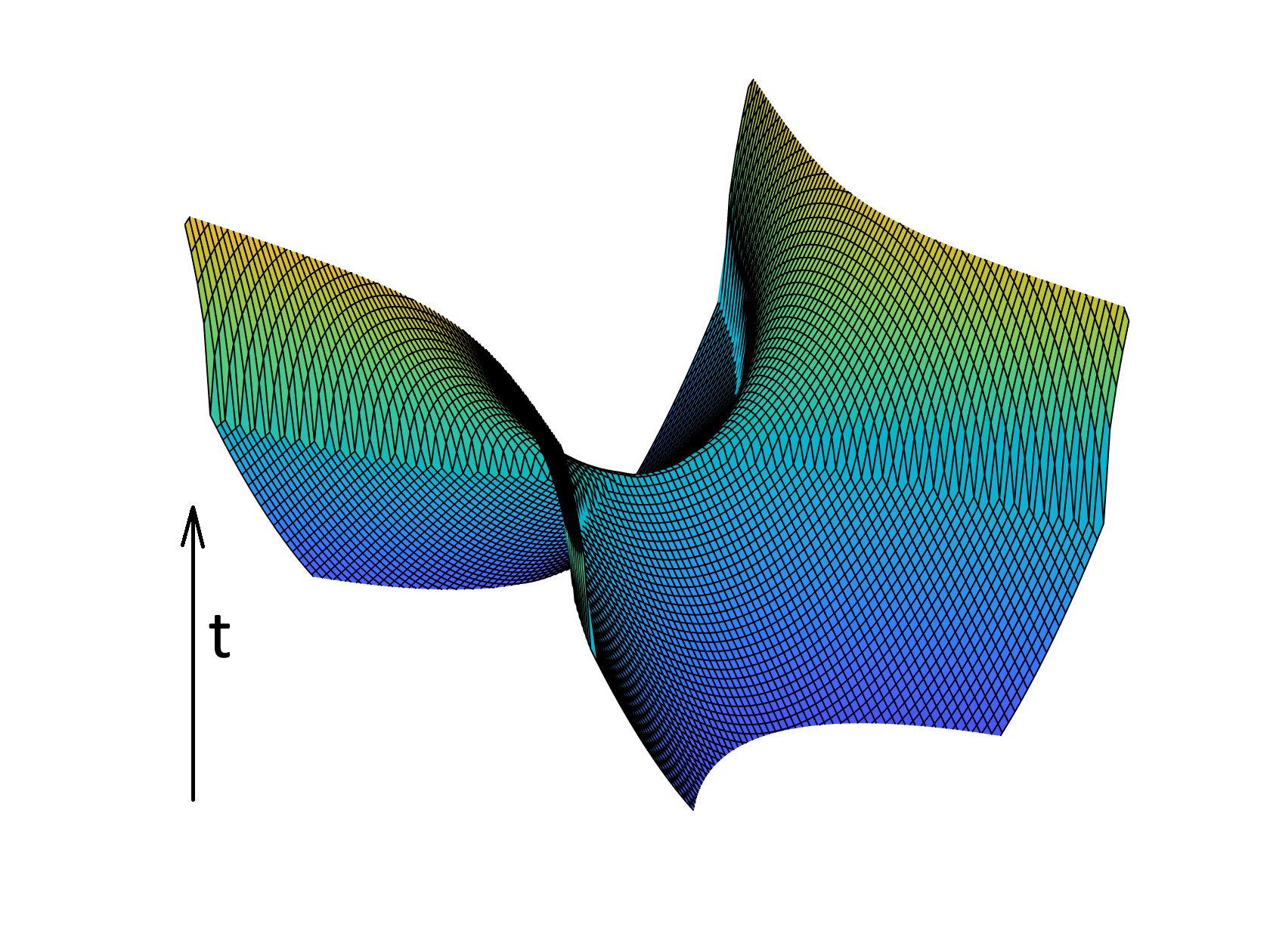}
		\includegraphics[width=0.4\textwidth]{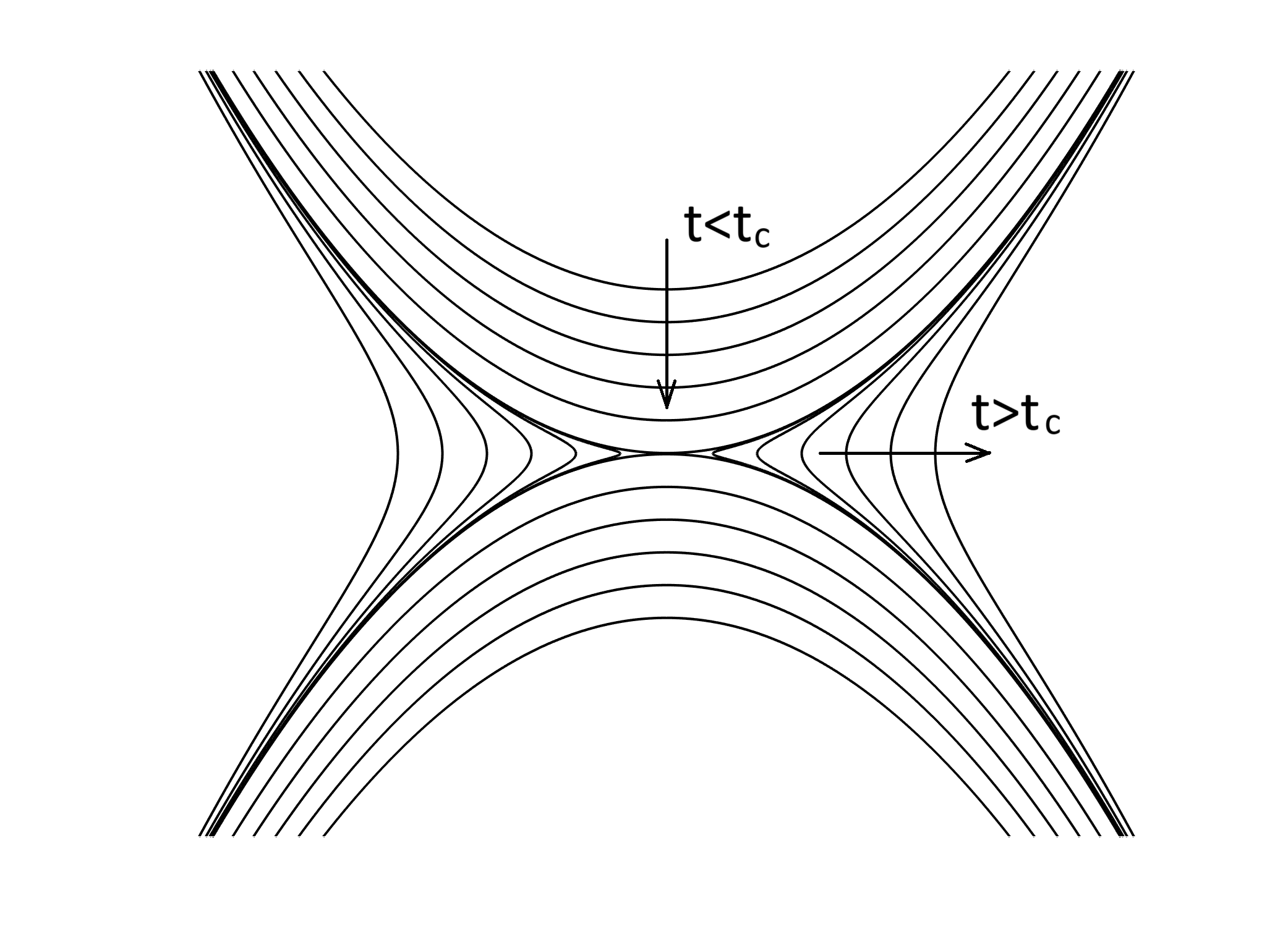}
	\end{center}
	\vskip-1.5em
	\caption{Domain merging with a `non-smooth' transition. The left plot visualizes $\GQ$, while the right plot shows snapshots of $\G(t)$, both in a neighborhood of $(x_c, t_c)$.}  
	\label{fig3}
\end{figure}

It is not difficult to check that $[V_\G]_+$ remains uniformly bounded for $t < t_c$ and $t > t_c$ if $p \geq 4$. Thus, Assumption~\ref{Ass1} holds.
However, it remains unclear whether the uniform trace inequality, as stated in Assumption~\ref{Ass3}, holds in this case.

If the time is reversed in \eqref{aux410}, then we have the domain splitting with a non-smooth transition. The transition point is  weakly singular and so both Assumptions~\ref{Ass1} and~\ref{Ass3} hold.

\subsection{Lipschitz condition for $\cQ$}
If the evolving domain is characterized by a level set function $\phi$ that is $C^1$ smooth on the space time domain and has a isolated critical point $(x_c,t_c)$ for which $\frac{\partial \phi}{\partial t}(x_c,t_c) \neq 0$ holds, then Assumption~\ref{Ass4} is satisfied, cf. Lemma~\ref{LemAss4}. For less regular $\phi$, the Lipschitz property of $\cQ$ is not guaranteed even if $\Omega(t)$ is uniformly Lipschitz in time as a spatial domain.

	\begin{figure}[h]
	\begin{center}
		\includegraphics[width=0.45\textwidth]{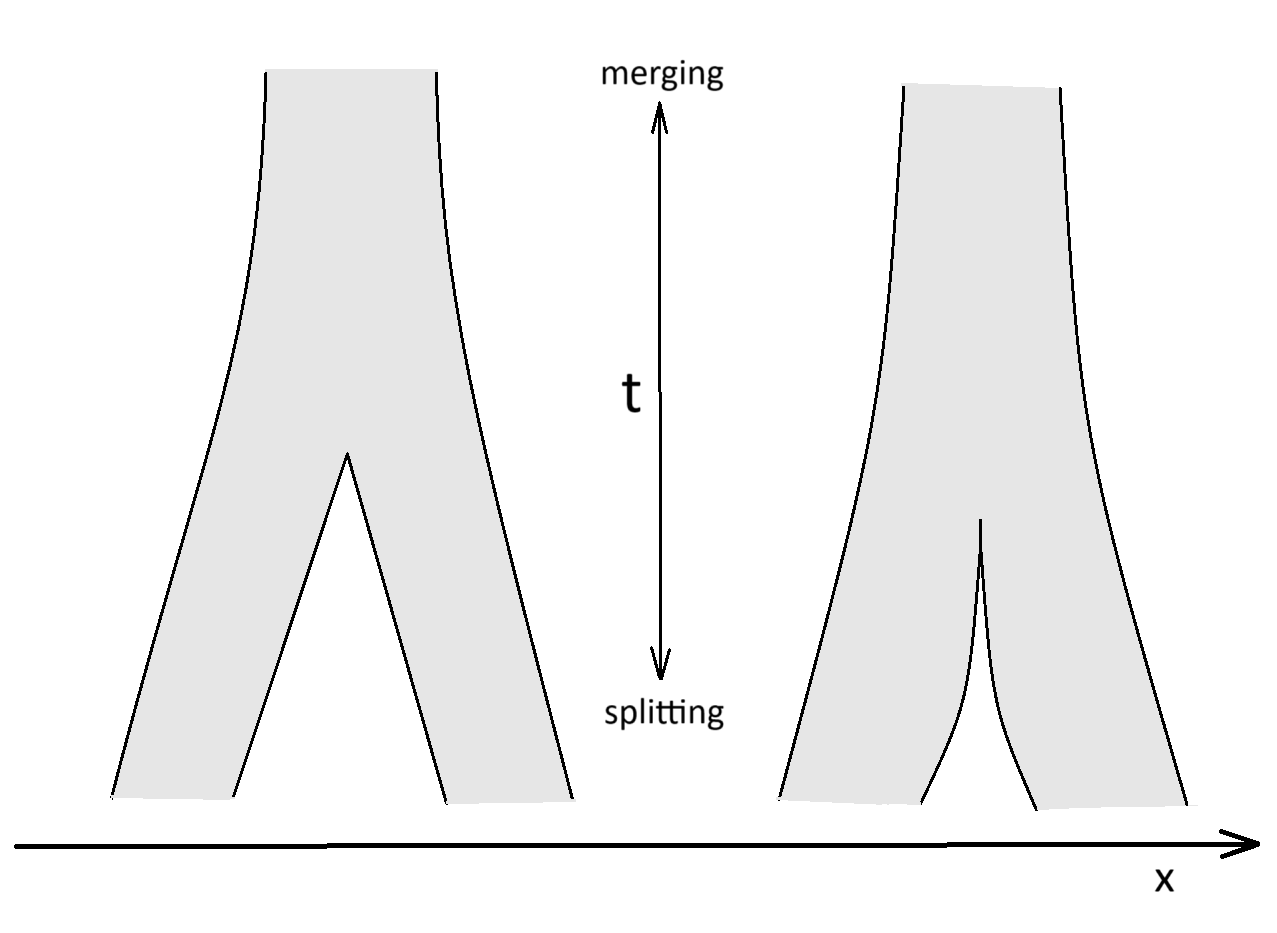}
	\end{center}
	\caption{One-dimensional domains merging/splitting with Assumption~\ref{Ass4} true (left plot) and false (right plot). \label{fig4}} 
\end{figure}

To illustrate this, we consider a simple example of merging or splitting of 1D domains, i.e., intervals in $\mathbb{R}$, with a level set function that is not $C^1$ smooth. Define
\[
  \phi(x,t)=-|x|^\alpha \pm t, \quad 0<\alpha\leq 1,\quad x \in \R,\quad t\in(-1,1).
\]
The singular point is $(x_c,t_c)=(0,0)$ and the '$+$'/'$-$' corresponds to splitting/merging. A simple computation yields that for all $\alpha$ we have $[V_\Gamma]_+=0$ in the case of splitting and $[V_\Gamma]_+\le \alpha^{-1}$ in the case of merging. Furthermore, in the splitting case we have that $\Gamma_+(t)$ is empty and in the case of merging it is a  boundary, consisting of two points,  and thus Assumption~\ref{Ass3} is trivially satisfied. However, Assumption~\ref{Ass4} is fulfilled only for $\alpha=1$, whereas for $\alpha <1$ the space--time domain is not Lipschitz, cf. Figure~\ref{fig4}.


\section{Narrow band estimate}\label{sec_key}
In this section we establish a key technical result that extends \eqref{eq2b} to any subinterval of $[0,T]$ and formulates the ``control of variation'' of $\|u\|_{\Omega(t)}^2$ in a form suitable for the stability analysis of a finite element method in Section~\ref{s_stab}. In the derivation of this result we use the Assumptions~\ref{Ass0},~\ref{Ass1} and \ref{Ass3}. 
 
Fix some $t_0\in[0,T)$ and $\Delta t>0$ such that  $t_0+\Delta t\in(0,T]$.  Consider \rev{an arbitrary} domain $\cO_{t_0}\subset \R^d$ such that 
$\Omega(t)\subset\cO_{t_0}$ for all $t\in [t_0,t_0+\Delta t]$.

\smallskip
\begin{lemma}\label{L:nb} Suppose that the evolution of $\Omega(t)$ satisfies Assumptions~\ref{Ass0}--\ref{Ass3}. For $u\in H^1(\cO_{t_0})$ and any $\eps>0$ it holds
	\begin{equation} \label{est:nb}
		\begin{split}
\|u\|_{\Omega(t_0+\Delta t)}^2- \|u\|_{\Omega(t_0)}^2 & \le  C\,\Big(\int_{t_0}^{t_0+\Delta t} \sup_{\G(t)}\,[V_\G]_+\,dt \Big) \left( (1+\eps^{-1})\|u\|^2_{\cO_{t_0}} + \eps\|\nabla u\|^2_{\cO_{t_0}} \right) \\
			& \le C\,\Delta t \left( (1+\eps^{-1})\|u\|^2_{\cO_{t_0}} + \eps\|\nabla u\|^2_{\cO_{t_0}} \right),
		\end{split}
	\end{equation}
where $C$ is independent of $\Delta t$, $t_0$ and $u$.
\end{lemma}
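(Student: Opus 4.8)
The plan is to establish the pointwise differential inequality underlying \eqref{eq2b} in a form that is \emph{uniform} in $t$ on each smooth interval $\cI_1,\cI_2$, integrate it, and then cross the singular time $t_c$ using Assumption~\ref{Ass0} to rule out any contribution from the topological change.

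First I would reduce to smooth $u$. Both sides of \eqref{est:nb} are continuous with respect to the $H^1(\cO_{t_0})$-norm: the right-hand side trivially, and the left-hand side because $\Omega(t)\subset\cO_{t_0}$ gives $\big|\,\|u\|_{\Omega(t)}^2-\|v\|_{\Omega(t)}^2\big|\le \|u-v\|_{\cO_{t_0}}\|u+v\|_{\cO_{t_0}}$; hence it suffices to prove \eqref{est:nb} for $u\in C^1(\overline{\cO_{t_0}})$ and pass to the limit by density of $C^1(\overline{\cO_{t_0}})$ in $H^1(\cO_{t_0})$. For such a (time-independent, $\partial_t u=0$) function and for $t$ in either smooth interval, Reynolds' identity \eqref{eq1} together with the trace inequality \eqref{est:tr} give $\frac{d}{dt}\|u\|_{\Omega(t)}^2\le \sup_{\G(t)}[V_\G]_+\,C_\Gamma(t)\,\|u\|_{\Omega(t)}\|u\|_{H^1(\Omega(t))}$. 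Writing $\|u\|_{H^1(\Omega(t))}\le\|u\|_{\Omega(t)}+\|\nabla u\|_{\Omega(t)}$, applying Young's inequality $\|u\|_{\Omega(t)}\|\nabla u\|_{\Omega(t)}\le\tfrac1{4\eps}\|u\|_{\Omega(t)}^2+\eps\|\nabla u\|_{\Omega(t)}^2$, bounding the $\Omega(t)$-norms by the larger $\cO_{t_0}$-norms, and using $\sup_t C_\Gamma(t)=:\bar C_\Gamma<\infty$ from Assumption~\ref{Ass3}, I obtain
\[
\frac{d}{dt}\|u\|_{\Omega(t)}^2 \le \bar C_\Gamma\,\sup_{\G(t)}[V_\G]_+\Big((1+\eps^{-1})\|u\|_{\cO_{t_0}}^2+\eps\|\nabla u\|_{\cO_{t_0}}^2\Big),\qquad t\in\cI_1\cup\cI_2.
\]

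Next I would integrate and cross $t_c$. If $t_c\notin[t_0,t_0+\Delta t]$, the slab lies in one smooth interval and integrating the displayed bound over $[t_0,t_0+\Delta t]$ gives the first estimate in \eqref{est:nb} with $C=\bar C_\Gamma$. If $t_c\in(t_0,t_0+\Delta t)$, for each $s\in[t_0,t_c)$ the fundamental theorem of calculus and the bound above yield $\|u\|_{\Omega(s)}^2-\|u\|_{\Omega(t_0)}^2\le \bar C_\Gamma\big(\int_{t_0}^{s}\sup_{\G(t)}[V_\G]_+\,dt\big)\big((1+\eps^{-1})\|u\|_{\cO_{t_0}}^2+\eps\|\nabla u\|_{\cO_{t_0}}^2\big)$. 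Letting $s\uparrow t_c$, Assumption~\ref{Ass0} is decisive: since $u^2\in L^1(\cO_{t_0})$ and $\mathrm{meas}_d\big(\Omega_c\triangle\Omega(t_c+\delta)\big)\to0$ as $\delta\to\pm0$, absolute continuity of the Lebesgue integral gives $\|u\|_{\Omega(t)}^2\to\|u\|_{\Omega_c}^2$ from both sides, so the left-hand side tends to $\|u\|_{\Omega_c}^2-\|u\|_{\Omega(t_0)}^2$ while the (nonnegative) integral increases to $\int_{t_0}^{t_c}$. The symmetric estimate on $(t_c,t_0+\Delta t]$ gives $\|u\|_{\Omega(t_0+\Delta t)}^2-\|u\|_{\Omega_c}^2\le\bar C_\Gamma\big(\int_{t_c}^{t_0+\Delta t}\sup_{\G(t)}[V_\G]_+\,dt\big)(\cdots)$. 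Adding the two, the common value $\|u\|_{\Omega_c}^2$ cancels and the integrals recombine over the full slab, yielding the first inequality of \eqref{est:nb}. The second inequality is then immediate from Assumption~\ref{Ass1}: $\int_{t_0}^{t_0+\Delta t}\sup_{\G(t)}[V_\G]_+\,dt\le V_{\max}^+\,\Delta t$, absorbing $V_{\max}^+$ into $C$.

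The step I expect to be the main obstacle is precisely the crossing of $t_c$: the derivative formula $\frac{d}{dt}\|u\|_{\Omega(t)}^2=\int_{\Gamma(t)}V_\Gamma u^2\,ds$ holds only on the open smooth intervals, and $V_\Gamma$ may blow up as $t\to t_c$, so one cannot simply integrate the signed derivative through the singularity. The resolution is to never do so: I argue only through the one-sided limits of the functional $t\mapsto\|u\|_{\Omega(t)}^2$, whose existence and coincidence across the topological change are guaranteed by Assumption~\ref{Ass0}, while Assumptions~\ref{Ass1} and~\ref{Ass3} ensure that the nonnegative upper-bound integrand $\bar C_\Gamma\sup_{\G(t)}[V_\G]_+$ remains bounded and hence integrable up to $t_c$ from either side.
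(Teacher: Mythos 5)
Your proposal is correct and follows essentially the same route as the paper's proof: Reynolds' transport theorem combined with the uniform trace bound of Assumption~\ref{Ass3} and Young's inequality on each smooth interval, then splitting at $t_c$ and using Assumption~\ref{Ass0} to match the one-sided limits of $t\mapsto\|u\|_{\Omega(t)}^2$ so that the two estimates recombine over the full slab. The only cosmetic differences are your explicit density reduction to $C^1$ functions (the paper works with $u\in H^1$ directly) and your appeal to absolute continuity of the integral of $u^2\in L^1$ where the paper invokes $u^2\in L^q$, $q>1$ — both suffice.
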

\begin{proof} 
	Assume  $t_c\notin(t_0,t_0+\Delta t)$. Without loss of generality
we may let $t_0=0$. Recalling that $V_\Gamma$ is the normal velocity of $\G(t)$ and employing the Reynolds transport theorem,
we have 
\begin{equation}\label{eq:aux107}
\|u\|_{\Omega(t_0+\Delta t)}^2- \|u\|_{\Omega(t_0)}^2 = \int_{0}^{\Delta t}\frac{d}{dt}\int_{\Omega(t)} u^2\,dx = \int_{0}^{\Delta t}\int_{\G(t)} V_\G u^2\,ds\,dt.
\end{equation}
This and Assumption~\ref{Ass1} imply  the estimate
\begin{equation}\label{eq:aux82b}
		\|u\|_{\Omega(t_0+\Delta t)}^2- \|u\|_{\Omega(t_0)}^2 \le \int_{0}^{\Delta t}\int_{\Gp(t)} [V_\G]_+ u^2\,ds\,dt
	\le	\int_{0}^{\Delta t} \sup_{\G(t)}[V_\G]_+\,dt \sup_{t\in[0,\Delta t]}\,\int_{\Gp(t)} u^2\,ds.
\end{equation}
Thanks to the Assumption~\ref{Ass3} for all $t\in[0,\Delta t]$ we have
\begin{equation}\label{aux111}
\begin{split}
\int_{\Gp(t)} u^2\,ds&\le   C_{\Gamma}(\Omega(t)) \| u \|_{L^2(\Omega(t))} (\| \nabla u \|_{L^2(\Omega(t))} +  \| u \|_{L^2(\Omega(t))})\\
&\leq C\big( \frac1{2\eps} \|u\|_{L^2(\Omega(t))}^{2} + \frac\eps{2}\|\nabla u\|_{L^{2}(\Omega(t))}^{2} + \|u\|_{L^2(\Omega(t))}^{2} \big)\quad\forall\eps>0.
\end{split}
\end{equation} 
  Using this in \eqref{eq:aux82b}  proves the first inequality in \eqref{est:nb} if $t_c\notin(t_0,t_0+\Delta t)$.
  The second inequality in \eqref{est:nb} directly follows from the first one using Assumption~\ref{Ass1}.

If $t_c\in(t_0,t_0+\Delta t)$ we proceed by splitting 
\begin{equation}\label{aux162}
	\|u\|_{\Omega(t_0+\Delta t)}^2- \|u\|_{\Omega(t_0)}^2\le\left(\|u\|_{\Omega(t_0+\Delta t)}^2- \|u\|_{\Omega(t_c)}^2\right)+\left(\|u\|_{\Omega(t_c)}^2- \|u\|_{\Omega(t_0)}^2\right).
\end{equation} 
Since $u^2\in L^q(\cO_{t_0})$, $q>1$,  Assumption~\ref{Ass0} implies  $\|u\|_{\Omega(t_c)}^2=\lim_{\delta\to\pm0} \|u\|_{\Omega(t_c+\delta)}^2$.
Therefore to estimate the terms on the right-hand side of \eqref{aux162} we apply the arguments above to estimate 
$\|u\|_{\Omega(t_c-\delta)}^2- \|u\|_{\Omega(t_0)}^2$ and $\|u\|_{\Omega(t_0+\Delta t)}^2- \|u\|_{\Omega(t_c+\delta)}^2$ for any sufficiently small $\delta>0$ and pass to the limit. \\
\end{proof}

\section{Model problem and finite element method}\label{s:model}
In this section, we introduce a model PDE to which we will apply a finite element discretization method.
We assume  that an evolving domain $\Omega(t)$, $t \in [0,T]$, as specified in Section~\ref{Sectdomains} is given and consider the  heat equation:
\begin{equation} \label{transport}
	  \begin{split}
				\frac{\partial u}{\partial t}  - \Delta u&=0\quad\text{on}~~\Omega(t), ~~t\in (0,T],
		  \\
		 \nabla u \cdot \bn &= 0 \quad \text{on}~~\partial \Omega(t),~~t\in (0,T],
		\end{split}
\end{equation}
with initial condition $u(x,0)=u_0(x)$ for $x \in \Omega(0)$. 
As far as we know, well-posedness of this  equation on an evolving domain with a topological change has not been studied in the literature so far. Here we \emph{assume} that there is a  solution $u$  to \eqref{transport} with regularity 
\begin{equation}\label{eq:reg}
	u\in W^{2,\infty}(\cQ)\cap H^{k}(\cQ),
\end{equation}
for some integer $k \geq 2$.
Clearly, if the evolving domain  corresponds to two separated domains that merge, such a regularity assumption is not reasonable. On the other hand, for domain splitting with a smooth space--time domain, cf. Lemma~\ref{LemAss4}, we consider this to be a plausible assumption. 

\begin{remark} \label{RemNew} \rm
To analyze the well-posedness of \eqref{transport}, a standard approach is to first show \textit{a priori} estimates satisfied by any  sufficiently regular solution. 
Assume $u\,:\,\Omega(t)\to\R$ is such a solution and $t\neq t_c$.  The Reynolds transport theorem and \eqref{transport} yield: 
\begin{equation} \label{eq:1}
	\begin{split}
	\frac{d}{dt}\|u\|_{\Omega(t)}^2& =\frac{d}{dt} \int_{\Omega(t)} u^2 \, dx = 2\int_{\Omega(t)} u\partial_t u+ \int_{\Gamma(t)} V_\Gamma u^2 \, ds = 2\int_{\Omega(t)} u\Delta u+ \int_{\Gamma(t)} V_\Gamma u^2 \, ds\\
	& = -2\|\nabla u\|_{\Omega(t)}^2+ \int_{\Gamma(t)} V_\Gamma u^2 \, ds.
	\end{split}
\end{equation}
In general the last term in \eqref{eq:1} is sign indefinite and we can handle in with the help of Assumptions~\ref{Ass1}--\ref{Ass3}, with the same  arguments as used in \eqref{eq2}--\eqref{eq2b}: 
\begin{equation*}
		\begin{split}
	\frac{d}{dt}\|u\|_{\Omega(t)}^2 &= -2\|\nabla u\|_{\Omega(t)}^2+ \int_{\Gamma(t)} V_\Gamma u^2 \, ds 
	 \\
	& \leq -2\|\nabla u\|_{\Omega(t)}^2 + C \Big((1+\frac{1}{2\eps})\|u\|_{\Omega(t)}^2 + 2\eps \|\nabla u\|_{\Omega(t)}^2\Big)
	\\
	&\leq  -\|\nabla u\|_{\Omega(t)}^2 + \tilde C\|u\|_{\Omega(t)}^2,\quad t\in(0,T),~ t\neq t_c
	\end{split}
\end{equation*}
with a suitable constant $\tilde C$ that is independent of $u$. 
Now the first \textit{a priori} estimate
\begin{equation} \label{eq:apr1}
	\sup_{t\in[0,T]}\|u\|_{\Omega(t)} +	\|\nabla u\|_{\cQ}\le C\|u\|_{\Omega(0)}
\end{equation} 
follows by applying the Gronwall inequality and Assumption~\ref{Ass0} to extend it over the critical time. We see that Assumptions~\ref{Ass0}--\ref{Ass3}
may also be critical for studying the well-posedness of the model problem~\eqref{transport}. 
\end{remark}

\begin{remark}\rm 
 A more physically motivated equation can be   
\begin{equation} \label{transport2}
\frac{\partial u}{\partial t} + \bw\cdot\nabla u -  \Delta u=0\quad\text{on}~~\Omega(t), ~~t\in (0,T],
\end{equation}
with an ALE velocity $\bw$ built from the domain diffeomorphisms $\Psi_i$: 
\begin{equation}\label{Lagrange}
\bw(t,\Psi_i(t,\by))=\frac{\partial \Psi_i(t,\by)}{\partial t},\quad t\in I_i,~\by\in \Omega_i^0.
\end{equation}

However, error analysis for a finite element method applied to \eqref{transport2} is more difficult to analyze, due to $\|\bw\|_{L^\infty(\Omega(t))} \to \infty$  for $t\to t_c$. So we start with the simpler problem \eqref{transport}.
\end{remark}

\subsection{Finite element method}
For the discretization of the heat equation \eqref{transport}, we use an Eulerian unfitted finite element method (FEM),  introduced in~\cite{lehrenfeld2019eulerian}. In that paper an error analysis of the method is presented for the case of \emph{smoothly} evolving domains.
We begin by describing the method.

We assume that for all times $t \in [0, T]$, the physical domain $\Omega(t)$ is embedded in a fixed bulk domain $\wOm$, and we consider a family of consistent subdivisions of $\wOm$ into quasi-uniform triangulations $\{\T_h\}_{h>0}$, consisting of simplices with characteristic mesh size $h$. This defines a family of background, time-independent triangulations.

For the spatial discretization, we use a time-independent finite element space,
\begin{equation} \label{eq:Vh} 
	V_h := \{v \in C(\wOm) \, :\, v|_T \in P_m(T),\ \forall T \in \mathcal{T}_h\},\quad m \ge 1, 
	\end{equation}
where $P_m(T)$ denotes the space of polynomials of degree at most $m$ on $T$.

The discretization method combines a standard time-stepping procedure with an implicit extension of the finite element solution, which allows time-stepping on an unfitted triangulation. The time interval $[0, T]$ is discretized using time steps $t_n = n \Delta t$, for $0 \leq n \leq N$. An extension is constructed on a domain formed by enlarging $\Omega^n := \Omega(t_n)$ with a few additional layers of simplices. This extended domain is denoted by $\Odt{n} \subset \wOm$. The width of the additional layer is determined by a parameter $\delta > 0$: 
\begin{equation} \label{def5} 
	\overline{\Odt{n}} := \bigcup_{T \in \Td{n}} \overline{T},\quad \text{where}~ \Td{n} := \{ T \in \T_h : \dist(x, \Omega^n) \leq \delta \text{ for some } x \in T \}. 
\end{equation}

The parameter $\delta$ may vary between time steps and is chosen such that the following key condition holds: \begin{equation} \label{cond1} \delta = \delta(n) \text{ is sufficiently large such that } \rev{\Omega(t)} \subset \Odt{n-1},~\rev{\text{for}~t\in[t_{n-1},t_n]} \quad n = 1, \dots, N. 
\end{equation}

This property ensures that in a time step from $t_{n-1}$ to $t_n$, an approximation $u_h^{n-1} \approx u(\cdot, t_{n-1})$ can be used in the variational formulation at time $t_n$, which involves integration over the domain $\Omega^n$, cf. \eqref{e:unfFEM1} below.
Thanks to Assumption~\ref{Ass1}, it is sufficient to choose $\delta \ge V_{\max}^+ \Delta t$ to ensure \eqref{cond1}. Thus, we may take \begin{equation} \label{delta} \delta \simeq \Delta t. \end{equation}

The triangulation $\Td{n}$ defined in \eqref{def5} constitutes the \emph{active mesh} at time step $n$. On this active mesh, we define the finite element space 
\begin{equation} \label{defV} 
V_h^n := \{v \in C(\Odt{n})\,:\, v|_T \in P_m(T),\ \forall T \in \Td{n} \},\quad m \ge 1.
 \end{equation}
  Note that this space is obtained by restricting functions from the time-independent bulk space $V_h$ to all simplices in $\Td{n}$.

The finite element \rev{in space and backward Euler in time} discretization of \eqref{transport} reads as follows:
Given $u_h^0 \rev{= \mathcal{I}u^0} \in V_h^0$, find $u_h^n \in V_h^n$, for $n = 1, \dots, N$, such that
\begin{equation} \label{e:unfFEM1} \int_{\Omega^n} \frac{u_h^n - u_h^{n-1}}{\Delta t} v_h dx + a^n(u_h^n, v_h) + \gamma_s \, s_h^n(u_h^n, v_h) = 0 \quad \text{for all } v_h \in V_h^n, 
\end{equation} 
with 
\begin{equation} \label{e:anh} a^n(u_h, v_h) := \int_{\Omega^n} \nabla u_h \cdot \nabla v_h  dx, 
\end{equation}
and where $s_h^n(\cdot, \cdot)$ is a stabilization bilinear form with corresponding stability parameter $\gamma_s > 0$.

The bilinear form $s_h^n(\cdot, \cdot)$ serves two purposes: it mitigates instabilities caused by irregular intersections of the mesh with the domain boundary, and it facilitates the \emph{implicit extension} of the finite element solution to $\Odt{n}$. Several suitable choices for $s_h^n$ are available in the literature. In this work, we use the “direct” version of the ghost penalty method proposed in \cite{preussmaster,lehrenfeld2019eulerian}, which is described in Appendix~\ref{A1}. The choice of the stability parameter $\gamma_s$ is based on stability analysis and is given in \eqref{choicegamma}.

\begin{remark} \label{remimplementation} \rm Note that the finite element discretization in \eqref{e:unfFEM1} is an \emph{un}fitted method. The triangulation $\mathcal{T}_h$ is not aligned with the domains $\Omega^n$, for $n = 1, \ldots, N$. Consequently, an implementation problem arises concerning numerical integration over cut elements.	Sufficiently accurate quadrature can be achieved by representing the evolving domain with a level set function and applying the parametric finite element technique introduced in \cite{lehrenfeld2015cmame}. We do not discuss these aspects further here and instead refer the reader to \cite{lehrenfeld2015cmame,lehrenfeld2019eulerian}.
	In the following analysis, the effect of quadrature errors is not taken into account.
\end{remark}

\section{Stability and error analysis} \label{SecError}
In this section, we present an error analysis of the finite element method~\eqref{e:unfFEM1}. A key ingredient is the estimate derived in Lemma~\ref{L:nb}, which allows us to obtain a stability estimate that matches the one for smoothly evolving domains without topological changes; see Theorem~\ref{s_stab} below. This stability result can be combined with standard consistency error estimates, leading to the discretization error bound stated in Theorem~\ref{Th2}.
The analysis closely follows the approach in \cite{lehrenfeld2019eulerian} for the case of smoothly evolving domains. Although the arguments used in the consistency and error analyses in Sections~\ref{sec:consist} and~\ref{sec:aprioriest} are standard, we include them here for completeness.

\subsection{Preliminaries} \label{s:prelim}
In the remainder we write $a \lesssim b$ to state that the inequality $a \leq c \, b$ holds for quantities $a$ and $b$, with a constant $c$, which is independent of the discretization 
parameters $h$, $\Delta t$, the time instance $t_n$, and the position of $\Omega^n$ in the background mesh.
Let $\mathcal{I}$ be the Lagrange interpolation operator \rev{defined on the extended computational domain}.  We will need the following consistency result for the stabilization term~\cite{lehrenfeld2019eulerian}: 
For $w \in H^{m+1}(\Odt{n}),~n=1,\dots, N$, the following holds:
\begin{align}
\label{eq:shnw}
			s_h^n(w,w) &\lesssim h^{2m} \Vert w \Vert_{H^{m+1}(\Odt{n})}^2,\\
\label{eq:shnIw}
			s_h^n(w - \mathcal{I} w, w - \mathcal{I} w) &\lesssim h^{2m} \Vert w \Vert_{H^{m+1}(\Odt{n})}^2.
\end{align}

Consider domains $\cS^n\subset\cO^n\subset \Odt{n}\subset\R^d$ defined as 
\[
\cO^n=\bigcup_{t\in [t_n,t_{n+1}]} \Omega(t)\quad\text{and}\quad \cS^n=\cO^n\setminus\Omega(t_n).
\]
We need the set of all simplices that are intersected by $\cS^n$,
\begin{equation*}
	\T^{n}_+ := \{ T \in \T_h\,: \text{meas}(T,\cS^n)>0\}.
\end{equation*}
We also introduce the set of simplices which are strictly internal for $\Omega^n$, 
\begin{equation*}
\T^{n}_{\rm int} := \{ T \in \T_h\,: T\subset \Omega^n\}.
\end{equation*}

\begin{assumption} \label{ass:overlap} 
	To every element in $\T^n_+\setminus\T^n_{\rm int}$ we require an element in $\T^n_{\rm int}$ that can be reached by repeatedly passing through facets in $\Td{n}$. We assume that there is mapping that maps every element $T \in \T^n_+\setminus\T^n_{\rm int}$ to such a path with the following properties. The number of facets passed through during this path is bounded by $K \lesssim (1 + \frac{\Delta t}{h})$. Furthermore, every uncut element $T \in \T^n_{\rm int}$ is the final element of such a path in at most $M$ of these paths where $M$ is a number that is bounded independently of $n$, $h$ and $\Delta t$.
\end{assumption}

Such a path condition is typical for ghost penalty stabilization methods. In our situation here, for the case $\Delta t \sim h$, it essentially means the following. Every element in the larger set $\T^{n}_+$ can be connected via elements in the active mesh $\Td{n}$ to an element in the smaller set $\T^n_{\rm int}$, and the number of elements needed for this connection (``length of the path'') is uniformly bounded. This kind of condition is not essentially influenced by topological changes of the domain. 

With this assumption one can show that for finite element functions the $L^2$ norm an an extended domain (``the larger set'') can be controlled by the sum of the $L^2$ norm on the original domain (``the smaller set'') and a suitably scaled ghost penalty stabilization  term. The following result is proved in  \cite{lehrenfeld2019eulerian}.
\begin{lemma} \label{lem:gp}
	Under Assumption \ref{ass:overlap}, there holds for $u \in V_h^n$:
	\begin{subequations}
		\begin{align}
			   \norm{u}_{\cO^n}^2 &\lesssim \norm{u}_{\Om{n}{}}^2 + K\, h^2 s_{h}^n(u,u), \label{e:gpglobal}\\
		   \norm{\nabla u}_{\cO^n}^2 &\lesssim \norm{\nabla u}_{\Om{n}{}}^2 + K \,  s_{h}^n(u,u). \label{e:gpglobalgrad}
		\end{align}
	\end{subequations}
\end{lemma}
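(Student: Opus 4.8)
The plan is to prove both inequalities by the same mechanism: the ghost-penalty stabilization $s_h^n(u,u)$ measures the jump of $u$ (and its derivatives) across the facets in the active mesh, so it controls the extent to which a finite element function $u \in V_h^n$ can grow as one moves from the interior set $\T^n_{\rm int}$ (``smaller set'') into the extended region $\T^n_+$ (``larger set''). Since $u$ restricted to a single simplex $T$ is a polynomial of degree at most $m$, its value on one simplex can be estimated in terms of its value on an adjacent simplex plus the facet jump between them; iterating along the path guaranteed by Assumption~\ref{ass:overlap} then bounds the contribution of each outer element $T \in \T^n_+ \setminus \T^n_{\rm int}$ by the contribution of its terminal interior element plus a sum of facet-jump terms along the path. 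The path length is controlled by $K \lesssim (1 + \frac{\Delta t}{h})$, and each interior element is reused at most $M$ times (with $M$ uniform), so summing over all outer elements produces exactly the structure on the right-hand sides of \eqref{e:gpglobal} and \eqref{e:gpglobalgrad}.

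Concretely, first I would establish a local per-facet estimate: for two simplices $T_1, T_2 \in \Td{n}$ sharing a facet $F$, the $L^2(T_2)$-norm of a polynomial $p$ of degree $\le m$ is bounded by its $L^2(T_1)$-norm plus a scaled measure of the jump of $p$ and its normal derivatives across $F$. This follows from finite-dimensionality on the reference element together with the standard inverse and scaling estimates on a quasi-uniform mesh; the factors of $h$ that appear are the source of the $h^2$ weighting in front of $s_h^n$ in \eqref{e:gpglobal} versus the unweighted $s_h^n$ in \eqref{e:gpglobalgrad} (one extra factor of $h^2$ appears for the $L^2$-norm of $u$ itself compared with $\nabla u$, by the usual inverse-inequality scaling). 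I would then telescope this local bound along the path from $T$ back to its interior endpoint, using a discrete Cauchy–Schwarz so that the accumulated jump terms across the $K$ facets of the path sum into the ghost-penalty form $s_h^n(u,u)$, picking up a factor $K$.

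Finally I would sum over all $T \in \T^n_+$, splitting into the interior elements (which contribute directly to $\norm{u}_{\Om{n}{}}^2$ or $\norm{\nabla u}_{\Om{n}{}}^2$) and the outer elements (whose endpoint-interior contributions are each counted at most $M$ times, so the sum of endpoint terms is $\lesssim$ the interior norm, and whose jump contributions assemble into $K\,h^2 s_h^n(u,u)$ or $K\,s_h^n(u,u)$). The main obstacle I expect is purely bookkeeping rather than conceptual: one must track the $h$-powers carefully through the telescoping so that the scaling is exactly $h^2 s_h^n$ in \eqref{e:gpglobal} and $s_h^n$ in \eqref{e:gpglobalgrad}, and one must invoke Assumption~\ref{ass:overlap} correctly so that both the path length $K$ and the reuse bound $M$ enter as claimed, with the overlap bound $M$ (uniform in $n,h,\Delta t$) being what prevents the sum of interior endpoint contributions from degenerating. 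Since this is precisely the estimate established in~\cite{lehrenfeld2019eulerian}, and Assumption~\ref{ass:overlap} is stated to be insensitive to topological changes, the argument transfers verbatim; I would simply cite that reference for the full details of the telescoping.
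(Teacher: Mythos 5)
Your proposal is correct and matches the paper's treatment: the paper does not prove Lemma~\ref{lem:gp} itself but cites \cite{lehrenfeld2019eulerian}, and your sketch is precisely the standard argument from that reference --- a local element-to-neighbor transfer estimate, telescoped along the paths of Assumption~\ref{ass:overlap} with Cauchy--Schwarz, the factor $K$ from the path length, the bound $M$ controlling reuse of interior endpoints, and the extra $h^2$ for the $L^2$-estimate versus the gradient estimate via inverse inequalities. One cosmetic note: with the ``direct'' ghost penalty of Appendix~\ref{A1}, the facet term is the volumetric difference $h^{-2}\|u_1-u_2\|^2_{\omega_F}$ of canonically extended polynomials on the facet patch rather than normal-derivative jumps across $F$, which makes your local per-facet estimate even simpler than the classical derivative-jump version you describe.
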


In the remainder of the error analysis we assume that the Assumptions~\ref{Ass0}-\ref{ass:overlap} are satisfied.  Using \eqref{e:gpglobalgrad} it easily follows that the discretization \eqref{e:unfFEM1} has a unique solution.
\subsection{Stability analysis} \label{s_stab}
A stability estimate is derived in the following theorem. To obtain this result we use the estimate of Lemma~\ref{L:nb}.
\begin{theorem}\label{Th1} Take 
\begin{equation} \label{choicegamma} \gamma_s=c_\gamma \big(1+ \frac{\Delta t}{h}\big)
\end{equation}
 with a constant $c_\gamma$ that is sufficiently large.
The solution of \eqref{e:unfFEM1} satisfies the following estimate:
	\begin{equation}\label{FE_stab1}
			\|u_h^k\|^2_{\Om{k}{}} + {\Delta t} \sum_{n=1}^{k}\left(\norm{\nabla u_h^n}_{\Om{n}{}}^2 + \gamma_s s_h^n(u_h^n,u_h^n) \right)
			\lesssim \norm{u^0}_{\Om{0}{}}^2 +  \Delta t \gamma_s  s_h^0(u_h^0,u_h^0) + \Delta t  \norm{\nabla u_h^0}_{\Om{0}{}}^2, 
	\end{equation}
	for $0 \leq k \leq N$.
\end{theorem}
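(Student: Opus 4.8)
The plan is to test the scheme \eqref{e:unfFEM1} with $v_h=u_h^n$ and run a discrete energy argument that mimics the smoothly-evolving case in \cite{lehrenfeld2019eulerian}, with Lemma~\ref{L:nb} supplying the one new ingredient needed to absorb the topological change. Choosing $v_h=u_h^n$ and using the elementary identity $2a(a-b)=a^2-b^2+(a-b)^2\ge a^2-b^2$ on the time-derivative term, I would first obtain, for each $n$,
\begin{equation*}
\tfrac{1}{2\Delta t}\big(\|u_h^n\|_{\Om{n}{}}^2-\|u_h^{n-1}\|_{\Om{n}{}}^2\big)+\|\nabla u_h^n\|_{\Om{n}{}}^2+\gamma_s s_h^n(u_h^n,u_h^n)\le 0.
\end{equation*}
The essential difficulty is that the term $\|u_h^{n-1}\|_{\Om{n}{}}^2$ is the $L^2$-norm of the \emph{old} iterate measured over the \emph{new} domain $\Om{n}{}$, whereas the telescoping sum wants $\|u_h^{n-1}\|_{\Om{n-1}{}}^2$. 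Bridging this mismatch is exactly where the narrow-band estimate enters.

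First I would rewrite the gap as $\|u_h^{n-1}\|_{\Om{n}{}}^2-\|u_h^{n-1}\|_{\Om{n-1}{}}^2$ and bound it. Since $\Om{n}{}\subset\cO^{n-1}$ by construction (the choice $\delta\simeq\Delta t\ge V_{\max}^+\Delta t$ guarantees $\Omega^n\subset\Odt{n-1}$ and $\Omega(t)\subset\cO^{n-1}$ for $t\in[t_{n-1},t_n]$), and $u_h^{n-1}\in V_h^{n-1}\subset H^1(\cO^{n-1})$, Lemma~\ref{L:nb} applied on the slab $[t_{n-1},t_n]$ with $\cO_{t_0}=\cO^{n-1}$ yields
\begin{equation*}
\|u_h^{n-1}\|_{\Om{n}{}}^2-\|u_h^{n-1}\|_{\Om{n-1}{}}^2\le C\,\Delta t\big((1+\eps^{-1})\|u_h^{n-1}\|_{\cO^{n-1}}^2+\eps\|\nabla u_h^{n-1}\|_{\cO^{n-1}}^2\big).
\end{equation*}
This is the key point: Lemma~\ref{L:nb} converts the domain-change error into a $\Delta t$-weighted combination of $L^2$ and $H^1$ norms over the extended domain, and the $\Delta t$ factor is precisely what a Gronwall argument can absorb.

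Next I would invoke Lemma~\ref{lem:gp} to replace the extended-domain norms $\|u_h^{n-1}\|_{\cO^{n-1}}^2$ and $\|\nabla u_h^{n-1}\|_{\cO^{n-1}}^2$ by the physical-domain norms $\|u_h^{n-1}\|_{\Om{n-1}{}}^2$, $\|\nabla u_h^{n-1}\|_{\Om{n-1}{}}^2$ plus ghost-penalty contributions $Kh^2 s_h^{n-1}(u_h^{n-1},u_h^{n-1})$ and $K s_h^{n-1}(u_h^{n-1},u_h^{n-1})$. Here the scaling $K\lesssim 1+\Delta t/h$ from Assumption~\ref{ass:overlap}, the relation $\delta\simeq\Delta t$, and the choice $\gamma_s=c_\gamma(1+\Delta t/h)$ must be tracked carefully so that, after choosing $\eps$ small, the resulting $\Delta t\,\|\nabla u_h^{n-1}\|^2$ term is dominated by the diffusion term already present at the previous step and the stabilization contribution $\Delta t\,K\,s_h^{n-1}$ is controlled by $\Delta t\,\gamma_s s_h^{n-1}$ with $c_\gamma$ large enough. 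This parameter bookkeeping — ensuring all the borrowed gradient and stabilization energy can be charged to terms that already appear on the left-hand side after summation — is the main technical obstacle.

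Finally I would sum the per-step inequality over $n=1,\dots,k$. The clean part telescopes to $\|u_h^k\|_{\Om{k}{}}^2-\|u^0\|_{\Om{0}{}}^2$ together with $2\Delta t\sum_{n=1}^k(\|\nabla u_h^n\|_{\Om{n}{}}^2+\gamma_s s_h^n(u_h^n,u_h^n))$, while the error terms form a sum $C\Delta t\sum_{n=0}^{k-1}(\|u_h^n\|_{\Om{n}{}}^2+\text{(absorbed gradient/stab terms)})$. After moving the absorbable terms to the left, a discrete Gronwall inequality eliminates the remaining $C\Delta t\sum\|u_h^n\|_{\Om{n}{}}^2$, producing a factor bounded uniformly in $T$; the $n=0$ contributions that cannot be absorbed (because $s_h^0$ and $\|\nabla u_h^0\|$ appear with a borrowed $\Delta t$ weight rather than the summed one) remain on the right-hand side, giving exactly the terms $\Delta t\,\gamma_s s_h^0(u_h^0,u_h^0)$ and $\Delta t\,\|\nabla u_h^0\|_{\Om{0}{}}^2$ in \eqref{FE_stab1}. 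This reproduces the stability bound with constants independent of $h$, $\Delta t$, $t_n$ and the mesh position, as claimed.
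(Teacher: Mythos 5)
Your proposal is correct and takes essentially the same route as the paper's own proof of Theorem~\ref{Th1}: testing \eqref{e:unfFEM1} with $v_h=u_h^n$, bridging $\|u_h^{n-1}\|_{\Om{n}{}}^2$ to $\|u_h^{n-1}\|_{\Om{n-1}{}}^2$ via the narrow-band estimate of Lemma~\ref{L:nb}, passing from the extended-domain norms back to $\Om{n-1}{}$ with Lemma~\ref{lem:gp}, absorbing the borrowed gradient and stabilization energy through the choices of $\eps$ and $c_\gamma$ (using $K\lesssim 1+\Delta t/h$ from Assumption~\ref{ass:overlap}), and finishing by telescoping and the discrete Gronwall inequality, with the $n=0$ terms left on the right exactly as in \eqref{FE_stab1}. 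No gaps; the parameter bookkeeping you flag as the main obstacle is handled in the paper precisely as you describe.
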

\begin{proof}
	We test \eqref{e:unfFEM1} with $u_h^n$ and multiply by $2 \Delta t$ which yields:
	\begin{equation*}
		\norm{u_h^n}_{\Om{n}{}}^2 + \norm{u_h^n - u_h^{n-1}}_{\Om{n}{}}^2 + 2\Delta t \, a^n(u_h^n,u_h^n) + 2\Delta t \, \gamma_s s_h^n(u_h^n,u_h^n) = \norm{u_h^{n-1}}_{\Om{n}{}}^2
	\end{equation*}
or simplifying, 
	\[
	\norm{u_h^n}_{\Om{n}{}}^2 +  2\Delta t \left(\norm{\nabla u_h^n}_{\Om{n}{}}^2 + \gamma_s s_h^n(u_h^n,u_h^n) \right) \le  \norm{u_h^{n-1}}_{\Om{n}{}}^2. 
\]
Applying estimate \eqref{est:nb} to the term $\norm{u_h^{n-1}}_{\Om{n}{}}^2$ we get
\[
	\begin{split}
	& \norm{u_h^n}_{\Om{n}{}}^2 +  2\Delta t \left(\norm{\nabla u_h^n}_{\Om{n}{}}^2 + \gamma_s s_h^n(u_h^n,u_h^n) \right)  \\ 
	&\le
\norm{u_h^{n-1}}_{\Om{n-1}{}}^2 + C\,\Delta t \left( (1+\eps^{-1})\norm{u_h^{n-1}}_{\cO^{n-1}}^2 +\eps\norm{\nabla u_h^{n-1}}_{\cO^{n-1}}^2\right).
	\end{split}
\]
Using further \eqref{e:gpglobalgrad}, we obtain
\[
\begin{split} & \norm{u_h^n}_{\Om{n}{}}^2 +  2\Delta t \big(\norm{\nabla u_h^n}_{\Om{n}{}}^2 + \gamma_s s_h^n(u_h^n,u_h^n) \big)  \le
\norm{u_h^{n-1}}_{\Om{n-1}{}}^2   \\ & +  C\,\Delta t \left( (1+\eps^{-1})\norm{u_h^{n-1}}_{\Om{n-1}{}}^2 +\eps\norm{\nabla u_h^{n-1}}_{\Om{n-1}{}}^2+ K(\eps+(1+\eps^{-1})h^2) \rev{s_h^{n-1}}(u_h^{n-1},u_h^{n-1})\big)\right).
\end{split}
\]
We take $\eps= C^{-1} $. For a suitable constant $\hat c$ we have, cf. Assumption~\ref{ass:overlap}, $C K(\eps+(1+\eps^{-1})h^2) = K( 1 +C(1+C)h^2)  \leq \hat c (1+\frac{\Delta t}{h})$. We take $c_\gamma \geq \hat c$ and thus get $C K(\eps+(1+\eps^{-1})h^2) \leq c_\gamma(1+\frac{\Delta t}{h})=\gamma_s$. Using this we obtain with $\tilde C=C(1+C)$:
\begin{equation}\label{aux379}
	\begin{split}
		\norm{u_h^n}_{\Om{n}{}}^2 &+  2\Delta t \big(\norm{\nabla u_h^n}_{\Om{n}{}}^2 + \gamma_s s_h^n(u_h^n,u_h^n) \big)  \\ 
		&\le
		\norm{u_h^{n-1}}_{\Om{n-1}{}}^2 + \tilde C\Delta t \norm{u_h^{n-1}}_{\Om{n-1}{}}^2 +\Delta t \norm{\nabla u_h^{n-1}}_{\Om{n-1}{}}^2+  \Delta t \, \gamma_s s_h^{n-1}(u_h^{n-1},u_h^{n-1}).
	\end{split}
\end{equation}
	Summing up  over $n=1,\dots,k,~k\leq N$, yields
	\[ \begin{split}
		& \norm{u_h^k}_{\Om{k}{h}}^2 +\Delta t  \sum_{n=1}^k  \norm{\nabla u_h^n}_{\Om{n}{}}^2 + \Delta t \gamma_s  \sum_{n=1}^k s_h^n(u_h^n,u_h^n)
		\\ &  \leq \norm{u^0_h}_{\Om{0}{}}^2 + \tilde C\Delta t\sum_{n=0}^{k-1} \| u^n_h \|_{\Om{n}{}}^2 + \Delta t \gamma_s   s_h^0(u_h^0,u_h^0) + \Delta t  \norm{\nabla u_h^0}_{\Om{0}{}}^2.
	\end{split}
	\]
	We apply Gronwall's Lemma to obtain 
	\[ 
	\norm{u_h^k}_{\Om{k}{h}}^2 +\Delta t  \sum_{n=1}^k  \norm{\nabla u_h^n}_{\Om{n}{}}^2 + \Delta t \gamma_s  \sum_{n=1}^k s_h^n(u_h^n,u_h^n)
	\leq e^{k \tilde C\Delta t}\left(\norm{u^0}_{\Om{0}{}}^2 +  \Delta t\gamma_s  s_h^0(u_h^0,u_h^0) + \Delta t  \norm{\nabla u_h^0}_{\Om{0}{}}^2\right),
\]
which completes the proof. 
\end{proof}

\subsection{Consistency estimates} \label{sec:consist}
In this section we derive bounds for the consistency error. 
In the analysis we need a smooth  extension of the exact solution $u$ to a space--time neighborhood
\[
\cO(\cQ) = \bigcup\limits_{t \in (0,T)} \cO(\Omega(t)) \times \{t\},\quad  \cQ\subset\cO(\cQ)\subset \Bbb{R}^{d+1},
\]
where $\cO(\Omega(t))$ are sufficiently large spatial neighborhoods of $\Omega(t)$ such that these contain the computational domains $\Omega_{\delta,h}^n$ used in the discretization method \eqref{defV}--\eqref{e:unfFEM1}.
We use that the space--time domain $\cQ$ is Lipschitz (Assumption~\ref{Ass4}), so the solution $u$ can be smoothly extended from $\cQ$ to  $\cO(\cQ)$. 
More precisely, there is a linear extension operator $\E{}:L^2(\cQ)\to L^2(\cO(\cQ))$ such that ~\cite[Section~VI.3.1]{stein2016singular}:
\begin{equation}\label{ExtBound0}
	\Vert {\E{}} u \Vert_{W^{k,p}(\mathcal{O}(\cQ))} \leq  C \Vert u \Vert_{W^{k,p}(\cQ)},\quad \text{for}~u \in W^{k,p}(\cQ),~~k=0,\dots,m+2,~~1\le p\le\infty.
\end{equation}
 We will identify $u$ with its extension.

Testing \eqref{transport} at $t=t_n$ with $v_h \in V_h^n$ we find that any smooth solution to \eqref{transport} satisfies
\begin{equation} \label{e:exid}
	\int_{\Om{n}{}} \partial_t u (t_n) v_h\, dx + a^n(u(t_n),v_h) = 0 \quad \text{ for all } ~v_h \in V_h^n.
\end{equation}
Given the extension, the error $\err^n = u^n - u_h^n$ is well defined in $\Odt{n}$. Subtracting \eqref{e:unfFEM1} from \eqref{e:exid} we obtain the error equation
\begin{equation} \label{e:erreq}
	\int_{\Om{n}{}} \frac{\err^n - \err^{n-1}}{\Delta t} v_h \, dx +  a_h^n(\err^n,v_h) + \gamma_s s_h^n(\err^n,v_h) = \consist(v_h),
\end{equation}
with\vspace*{-2ex}
\begin{align*}
	\consist(v_h) :=
	& \hphantom{+} \overbrace{
		\int_{\Om{n}{}} u_t(t_n) v_h dx - \int_{\Om{n}{}} \frac{u^n-u^{n-1}}{\Delta t} v_h dx
	}^{I_1}
	+ \overbrace{ \vphantom{\int_{\Om{n}{}}} \gamma_s s_h^n(u^n,v_h)
	}^{I_2}.
\end{align*}

\begin{lemma}\label{l_consist} Assume $u\in W^{2,\infty}(\cQ)\cap H^{m+2}(\cQ)$,
	then the consistency error has the bound
	\begin{equation}\label{est:consist}
		|\consist(v_h)|\lesssim (\Delta t+\gamma_s^\frac12 h^m  ) \, (\norm{u}_{W^{2,\infty}(\cQ)}+\norm{u}_{H^{m+2}(\cQ)}) \,
		(\|v_h\|_{\Om{n}{}}+ \gamma_s^\frac12 s_h^n(v_h,v_h)^\frac12).
	\end{equation}
\end{lemma}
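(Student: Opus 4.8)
The plan is to split $\consist(v_h) = I_1 + I_2$ and bound each contribution separately by a Cauchy--Schwarz step followed by an appropriate regularity estimate, and then to majorise the resulting norms so as to fit the two factors on the right-hand side of \eqref{est:consist}. The term $I_1$ is the temporal truncation error of the backward difference quotient and will carry the $\Delta t$ factor; the term $I_2$ is the stabilization consistency error and will carry the $\gamma_s^\frac12 h^m$ factor.

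For $I_1$ I would first write it as the single integral
\[
I_1 = \int_{\Om{n}{}} \Big( u_t(t_n) - \frac{u^n - u^{n-1}}{\Delta t}\Big) v_h \dx ,
\]
and recognise the bracket as the consistency error of the backward difference. A first-order Taylor expansion in time, applied to the smooth extension of $u$ (so that $u^{n-1}(\bx)=u(\bx,t_{n-1})$ is well defined for all $\bx\in\Om{n}{}$, even when $(\bx,t_{n-1})\notin\cQ$), gives the pointwise bound
\[
\Big| u_t(\bx,t_n) - \frac{u^n(\bx) - u^{n-1}(\bx)}{\Delta t}\Big| \le \tfrac12 \Delta t\, \|u_{tt}\|_{L^\infty(\cO(\cQ))} \lesssim \Delta t\, \|u\|_{W^{2,\infty}(\cQ)},
\]
where the last step uses the $W^{2,\infty}$-stability \eqref{ExtBound0} of the extension. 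Since the domains $\Om{n}{}$ have uniformly bounded measure, a Cauchy--Schwarz inequality then yields $|I_1| \lesssim \Delta t\, \|u\|_{W^{2,\infty}(\cQ)} \|v_h\|_{\Om{n}{}}$.

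For $I_2 = \gamma_s s_h^n(u^n, v_h)$ I would use that $s_h^n(\cdot,\cdot)$ is symmetric positive semidefinite and apply its Cauchy--Schwarz inequality together with the consistency estimate \eqref{eq:shnw}, which bounds $s_h^n(u^n,u^n)^\frac12 \lesssim h^m \|u^n\|_{H^{m+1}(\Odt{n})}$, to obtain
\[
|I_2| \lesssim \gamma_s h^m\, \|u^n\|_{H^{m+1}(\Odt{n})}\, s_h^n(v_h,v_h)^\frac12 = \gamma_s^\frac12 h^m\, \|u^n\|_{H^{m+1}(\Odt{n})}\cdot \gamma_s^\frac12 s_h^n(v_h,v_h)^\frac12 .
\]
It then remains to control the spatial $H^{m+1}$-norm of the time slice $u^n = u(\cdot,t_n)$ on the enlarged strip $\Odt{n}$ by the space--time norm $\|u\|_{H^{m+2}(\cQ)}$, \emph{uniformly} in $n$. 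This is the main obstacle, and I would handle it in three steps: extend $u$ from $\cQ$ to the space--time neighborhood $\cO(\cQ)$ via the Stein extension \eqref{ExtBound0}, which is legitimate because $\cQ$ is Lipschitz (Assumption~\ref{Ass4}); take the trace of this extension onto the horizontal hyperplane $\{t=t_n\}$, which costs half a derivative and therefore requires the space--time regularity $H^{m+2}$ in order to deliver a spatial $H^{m+1}$ slice on $\cO(\Omega(t_n))\supset\Odt{n}$; and observe that, after extension to a full space--time slab, the trace constant is translation-invariant in $t$ and hence independent of $t_n$, giving $\|u^n\|_{H^{m+1}(\Odt{n})} \lesssim \|u\|_{H^{m+2}(\cQ)}$ uniformly. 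Combining the bounds for $I_1$ and $I_2$, majorising each solution norm by $\|u\|_{W^{2,\infty}(\cQ)}+\|u\|_{H^{m+2}(\cQ)}$ and each test-function factor by $\|v_h\|_{\Om{n}{}}+\gamma_s^\frac12 s_h^n(v_h,v_h)^\frac12$, and factoring out the common $(\Delta t + \gamma_s^\frac12 h^m)$, then produces \eqref{est:consist}.
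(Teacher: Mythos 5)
Your proposal is correct and follows essentially the same route as the paper's proof: the same splitting into $I_1$ and $I_2$, a Taylor remainder in time for $I_1$ (the paper writes the integral-form remainder $\int_{t_{n-1}}^{t_n}\frac{t-t_{n-1}}{\Delta t}u_{tt}\,dt$ where you use the pointwise Lagrange form, which is equivalent here), and Cauchy--Schwarz plus \eqref{eq:shnw} for $I_2$, closed by the trace bound $\|u^n\|_{H^{m+1}(\Odt{n})}\lesssim\|u\|_{H^{m+2}(\cO(\cQ))}$ and the Stein extension \eqref{ExtBound0}. Your only deviation is cosmetic: you spell out the time-slice trace and its uniformity in $n$, which the paper simply asserts in one line.
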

\begin{proof} We treat $\consist(v_h)$ term by term, starting with $I_1$:
	\[
	I_1=  \int_{\Om{n}{}}\int_{t_{n-1}}^{t_n}\frac{t-t_{n-1}}{\Delta t} u_{tt} \,dt\,v_h\, dx.
	\]
	We have 
	\begin{equation} \label{h17}
			\left| \int_{\Om{n}{}}\int_{t_{n-1}}^{t_n}u_{tt} \frac{t-t_{n-1}}{\Delta t}\,dt\,v_h\, ds\right| \le \tfrac12 \Delta t\|u_{tt}\|_{L^\infty(\cO(\cQ))}\|v_h\|_{L^1(\Om{n}{})} \lesssim \Delta t\|u\|_{W^{2,\infty}(\cQ)}\|v_h\|_{\Om{n}{}}.
	\end{equation}
	For the $s_h^n(u^n,v_h)$ part in the  term $I_2$ we  use the Cauchy--Schwarz inequality and  the result in \eqref{eq:shnw},
	\begin{equation*}
		s_h^n(u^n,v_h)\le s_h^n(u^n,u^n)^\frac12 s_h^n(v_h,v_h)^\frac12 \lesssim h^{m}\|u\|_{H^{m+1}(\Odt{n})}s_h^n(v_h,v_h)^\frac12
		\lesssim h^{m}\|u\|_{H^{m+2}(\cQ)}s_h^n(v_h,v_h)^\frac12.
	\end{equation*}
	In the last estimate we used the trace inequality $\|u\|_{H^{m+1}(\Odt{n})}\lesssim \|u\|_{H^{m+2}(\mathcal{O}(\cQ))}$ and \eqref{ExtBound0}. Combining this bound with the one in \eqref{h17}  yields the bound  \eqref{est:consist}.
\end{proof}

\begin{remark} \label{remStabincons} \rm
The factor $\gamma_s^\frac12 h^m$ in the  consistency error bound results from the fact that the ghost penalty stabilization $s_h(\cdot,\cdot)$ that we use is \emph{not} consistent, $s_h(u,v_h) \neq 0$. The stabilization parameter $\gamma_s=c_\gamma(1 +\frac{\Delta t}{h})$ is not uniformly bounded if $\Delta t \sim h^\alpha$ with $\alpha < 1$. At the same time, the latter scaling is not very realistic. This consistency error due to the stabilization can be avoided if one uses a consistent variant of ghost penalty stabilization, i.e., one for which $s_h(u,v_h) = 0$ for all $v_h$ holds,  e.g., the derivative jump formulation \cite{Badia2022}. \rev{In the latter case, the method does not impose any restrictions on $\Delta t$ in terms of $h$. }
\end{remark}

\subsection{Error estimate in the energy norm} \label{sec:aprioriest}
We let $u^n_I = \mathcal{I} u^n \in V_h^n$ be the Lagrange interpolant  for $u^n$ in $\Odt{n}$; we assume $u^n$  sufficiently smooth so that the interpolation is well-defined. Following standard lines of argument, we split $\err^n$ into finite element and approximation parts,
\[
\err^n=\underset{\mbox{$e^n$}}{\underbrace{(u^n-u^n_I)}}+\underset{\mbox{$e^n_h \in V_h^n$}}{\underbrace{(u^n_I-u^n_h)}}.
\]
Equation \eqref{e:erreq} yields
\begin{equation}\label{e:err1}
	\int_{\Om{n}{}}\left(\frac{e^n_h-e^{n-1}_h}{\Delta t} \right) v_h\,ds+ a^n(e_h^n,v_h)+\gamma_s s_h^n(e_h^n,v_h) = \interpol(v_h)+\consist(v_h),\quad \forall~v_h\in V^n_h,
\end{equation}
with the interpolation term
\[
\interpol(v_h)=-\int_{\Om{n}{}}\left(\frac{e^n-e^{n-1}}{\Delta t} \right) v_h\,ds_h-  a^n(e^n,v_h)-\gamma_s s_h^n(e^n,v_h).
\]
We give the estimate for the interpolation terms in the following lemma.

\begin{lemma}\label{l_interp} Assume $u\in H^{m+2}(\cQ)$. 
	The following  holds:
	\begin{equation}\label{est_inter}
		|\interpol(v_h)|\lesssim \gamma_s^\frac12 h^m \,\|u\|_{H^{m+2}(\cQ)} \, 	(\|v_h\|_{H^1(\Om{n}{})}+ \gamma_s^\frac12 s_h^n(v_h,v_h)^\frac12).
	\end{equation}
\end{lemma}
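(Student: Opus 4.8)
The plan is to split $\interpol(v_h)$ into its three constituent parts and bound each by Cauchy--Schwarz so that the $v_h$-dependence is isolated:
\[
\interpol(v_h) = \underbrace{-\int_{\Om{n}{}}\tfrac{e^n-e^{n-1}}{\Delta t}\,v_h\,dx}_{T_1}\;\underbrace{-\,a^n(e^n,v_h)}_{T_2}\;\underbrace{-\,\gamma_s s_h^n(e^n,v_h)}_{T_3}.
\]
Here $T_1$ produces the factor $\|v_h\|_{\Om{n}{}}\le\|v_h\|_{H^1(\Om{n}{})}$, the bilinear-form term $T_2$ produces $\|\nabla v_h\|_{\Om{n}{}}\le\|v_h\|_{H^1(\Om{n}{})}$, and $T_3$ produces $\gamma_s^{\frac12}s_h^n(v_h,v_h)^{\frac12}$, which together reconstruct the right-hand factor of \eqref{est_inter}. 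It then remains to bound the three $u$-dependent coefficients by $\gamma_s^{\frac12}h^m\|u\|_{H^{m+2}(\cQ)}$.

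The terms $T_2$ and $T_3$ are the routine ones. For $T_2$ I would invoke the standard Lagrange interpolation estimate $\|\nabla e^n\|_{\Om{n}{}}=\|\nabla(u^n-\mathcal{I}u^n)\|_{\Om{n}{}}\lesssim h^m\|u^n\|_{H^{m+1}(\Odt{n})}$, followed by the space--time trace inequality $\|u^n\|_{H^{m+1}(\Odt{n})}\lesssim\|u\|_{H^{m+2}(\mathcal{O}(\cQ))}$ and the extension bound \eqref{ExtBound0}, exactly as in the proof of Lemma~\ref{l_consist}; since $\gamma_s\ge c_\gamma\gtrsim 1$ this gives $\|\nabla e^n\|_{\Om{n}{}}\lesssim h^m\|u\|_{H^{m+2}(\cQ)}\lesssim\gamma_s^{\frac12}h^m\|u\|_{H^{m+2}(\cQ)}$. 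For $T_3$ I would apply Cauchy--Schwarz for $s_h^n$ together with the stabilization-consistency bound \eqref{eq:shnIw}, giving $s_h^n(e^n,e^n)^{\frac12}=s_h^n(u^n-\mathcal{I}u^n,u^n-\mathcal{I}u^n)^{\frac12}\lesssim h^m\|u^n\|_{H^{m+1}(\Odt{n})}\lesssim h^m\|u\|_{H^{m+2}(\cQ)}$, so that $T_3\lesssim\gamma_s^{\frac12}h^m\|u\|_{H^{m+2}(\cQ)}\cdot\gamma_s^{\frac12}s_h^n(v_h,v_h)^{\frac12}$.

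The delicate term is $T_1$, and this is where the real work lies. Because the background triangulation is fixed in time, $\mathcal{I}$ is a single linear operator, so $e^n-e^{n-1}=(\mathrm{id}-\mathcal{I})(u^n-u^{n-1})$ on $\Om{n}{}\subset\Odt{n-1}$ (the inclusion guaranteed by \eqref{cond1}). Applying the $L^2$ interpolation estimate of order $m+1$ and writing the difference as a time integral $u^n-u^{n-1}=\int_{t_{n-1}}^{t_n}\partial_t u\,dt$, I would estimate
\[
\Big\|\tfrac{e^n-e^{n-1}}{\Delta t}\Big\|_{\Om{n}{}}\lesssim \tfrac{h^{m+1}}{\Delta t}\,|u^n-u^{n-1}|_{H^{m+1}}\le \tfrac{h^{m+1}}{\Delta t}\int_{t_{n-1}}^{t_n}|\partial_t u|_{H^{m+1}}\,dt\le \tfrac{h^{m+1}}{\sqrt{\Delta t}}\,\|u\|_{H^{m+2}(\cQ)},
\]
where the final step combines Cauchy--Schwarz in time with the fact that $\partial_t u$, measured in the spatial $H^{m+1}$ seminorm and integrated in $t$, is controlled via \eqref{ExtBound0} by the full space--time norm $\|u\|_{H^{m+2}(\cQ)}$. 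The main obstacle is then purely one of scaling: one must absorb $\tfrac{h^{m+1}}{\sqrt{\Delta t}}=h^m\cdot\tfrac{h}{\sqrt{\Delta t}}$ into the target factor $\gamma_s^{\frac12}h^m$. Since $\gamma_s^{\frac12}=c_\gamma^{\frac12}(1+\Delta t/h)^{\frac12}\gtrsim 1$, the required bound $\tfrac{h}{\sqrt{\Delta t}}\lesssim\gamma_s^{\frac12}$ holds precisely under the mild mesh--time relation $\Delta t\gtrsim h^2$, which in particular is satisfied by the practical choice $\Delta t\simeq h$ of \eqref{delta}; I would invoke this relation here. Collecting the bounds for $T_1$, $T_2$ and $T_3$ then yields \eqref{est_inter}.
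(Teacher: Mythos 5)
Your decomposition and your treatment of the elliptic and stabilization terms follow exactly the standard route; note, though, that the paper offers no written proof of this lemma at all --- it simply defers to \cite[Lemma~5.7]{lehrenfeld2019eulerian} --- so your attempt is in effect a reconstruction of that cited argument. For $T_2$ and $T_3$ the reconstruction is correct: Cauchy--Schwarz, the Lagrange interpolation estimate, the stabilization consistency bound \eqref{eq:shnIw}, and the extension/trace chain via \eqref{ExtBound0} are precisely the right ingredients, and absorbing the missing factor through $1\lesssim\gamma_s^{1/2}$ is legitimate since $\gamma_s\ge c_\gamma$. Your identity $e^n-e^{n-1}=(\mathrm{id}-\cI)(u^n-u^{n-1})$ on $\Om{n}{}$, justified by \eqref{cond1} and the time-independence of the background interpolant, is also sound.

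The one substantive point is your term $T_1$, and you have diagnosed it honestly: with only $u\in H^{m+2}(\cQ)$, the chain $\|(\mathrm{id}-\cI)(u^n-u^{n-1})\|\lesssim h^{m+1}|u^n-u^{n-1}|_{H^{m+1}}$ plus Cauchy--Schwarz in time produces the factor $h^{m+1}\Delta t^{-1/2}$, and matching it against $\gamma_s^{1/2}h^m$ forces $\Delta t\gtrsim h^2$. That hypothesis is not in the lemma as stated, so strictly you have proved a conditional version. It is worth seeing that the loss is created by the statement itself, namely by placing the \emph{global} norm $\|u\|_{H^{m+2}(\cQ)}$ on every time step: since $H^{m+2}(\cQ)$ controls $\partial_t u$ only in $L^2$ in time, taking $u(x,t)=\sqrt{\Delta t}\,\theta(t)w(x)$ with $\theta$ ramping from $0$ to $1$ across a single slab shows the per-step factor $h^{m+1}\Delta t^{-1/2}$ is essentially sharp, so \emph{some} mesh--time relation (or extra time regularity) is genuinely needed for the unconditional per-step global-norm bound. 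The cleaner fix, which also removes your condition, is to keep the slab-local quantity your own computation produces, $|T_1|\lesssim h^{m+1}\Delta t^{-1/2}\big(\int_{t_{n-1}}^{t_n}\|\partial_t u\|^2_{H^{m+1}}\,dt\big)^{1/2}\|v_h\|_{\Om{n}{}}$: after Young's inequality and summation over $n$ in Theorem~\ref{Th2}, the factors $\Delta t\cdot\Delta t^{-1}$ cancel and the local norms telescope to $h^{2m+2}\|u\|^2_{H^{m+2}(\cQ)}$, with no condition relating $\Delta t$ and $h$ and in fact one order of $h$ better than what \eqref{est_inter} retains. So either carry $\Delta t\gtrsim h^2$ explicitly (harmless under the paper's intended scaling $\Delta t\simeq h$, cf.\ \eqref{delta} and Remark~\ref{remStabincons}) or restate the bound with the slab-local norm; as written, your argument does not deliver \eqref{est_inter} verbatim in the regime $\Delta t\ll h^2$.
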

The proof of Lemma~\ref{l_interp} repeats the arguments from \cite[Lemma~5.7]{lehrenfeld2019eulerian}.

Combining the results we obtain an error bound using standard arguments, which we include for completeness.
\begin{theorem}\label{Th2}
	Assume that the  solution $u$ to \eqref{transport}  has smoothness $u\in W^{2,\infty}(\cQ)\cap H^{m+2}(\cQ)$. 
	For  $u_h^n$, $n=1,\dots,N$, the finite element solution of \eqref{e:unfFEM1},
	define $\err^n = u_h^n - u^n$. For    $\Delta t$ sufficiently small the following error estimate holds:
	\begin{equation}\label{FE_est1}
		\|\err^n\|^2_{\Om{n}{}}+\Delta t \sum_{k=1}^{n}\! \left( \|\nabla\err^k \|^2_{\Om{k}{}}
		+\gamma_s s_h^n(\err^n,\err^n)\right)
		\lesssim  \exp(c_0 t_n) R(u) (\Delta t^2+ \gamma_s h^{2m} ),
	\end{equation}
	with $R(u) := \|u\|_{H^{m+2}(\cQ)}^2+\|u\|_{W^{2,\infty}(\cQ)}^2$ and $c_0$ independent of $h$, $\Delta t$, $n$ and of the positions of $\Omega^n$ in the background mesh.
\end{theorem}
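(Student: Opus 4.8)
The plan is to follow the standard energy-argument structure for parabolic problems, now that the stability machinery of Theorem~\ref{Th1} is available. The central object is the finite element error part $e_h^n = u_I^n - u_h^n \in V_h^n$, which satisfies the perturbed scheme \eqref{e:err1} with right-hand side $\interpol(v_h) + \consist(v_h)$. First I would observe that \eqref{e:err1} has exactly the same left-hand side structure as the original discretization \eqref{e:unfFEM1}, so the \emph{entire} stability derivation of Theorem~\ref{Th1} applies verbatim to $e_h^n$, except that the summed inequality now carries the accumulated forcing terms. Concretely, I would test \eqref{e:err1} with $v_h = e_h^n$, multiply by $2\Delta t$, and reproduce the polarization identity and the narrow-band estimate \eqref{est:nb} applied to $\norm{e_h^{n-1}}_{\Om{n}{}}^2$, together with \eqref{e:gpglobal}--\eqref{e:gpglobalgrad}, to land on an inequality of the form \eqref{aux379} but with an extra term $2\Delta t\,(\interpol(e_h^n)+\consist(e_h^n))$ on the right.

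The key step is then to absorb the forcing. Using Lemmas~\ref{l_consist} and~\ref{l_interp}, both $|\consist(e_h^n)|$ and $|\interpol(e_h^n)|$ are bounded by $(\Delta t + \gamma_s^{1/2} h^m)\,R(u)^{1/2}\,(\|e_h^n\|_{\Om{n}{}} + \gamma_s^{1/2} s_h^n(e_h^n,e_h^n)^{1/2} + \|\nabla e_h^n\|_{\Om{n}{}})$ up to constants. I would split each product with a weighted Young inequality, $ab \le \tfrac{1}{4\eta} a^2 + \eta b^2$, choosing $\eta$ small enough that the $\|\nabla e_h^n\|^2$ and $s_h^n(e_h^n,e_h^n)$ contributions are dominated by the corresponding positive terms $2\Delta t(\norm{\nabla e_h^n}^2 + \gamma_s s_h^n(e_h^n,e_h^n))$ on the left, while the leftover $\|e_h^n\|_{\Om{n}{}}^2$ feeds into the Gronwall term. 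The squared forcing factor produces precisely $(\Delta t + \gamma_s^{1/2} h^m)^2 R(u) \lesssim (\Delta t^2 + \gamma_s h^{2m}) R(u)$ after the elementary bound $(a+b)^2 \le 2(a^2+b^2)$, which is where the two error contributions in \eqref{FE_est1} originate. Summing over $n=1,\dots,k$ and applying Gronwall's lemma (as in Theorem~\ref{Th1}) then yields the bound on $\|e_h^n\|_{\Om{n}{}}^2 + \Delta t\sum_k(\|\nabla e_h^k\|^2 + \gamma_s s_h^k(e_h^k,e_h^k))$ with the $\exp(c_0 t_n)$ factor.

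Finally I would recover the full error $\err^n = u^n - u_h^n = e_h^n + (u^n - u_I^n)$ by the triangle inequality, controlling the interpolation remainder $u^n - u_I^n$ by the standard approximation estimate $\|u^n - u_I^n\|_{\Om{n}{}} + \Delta t^{1/2}\|\nabla(u^n-u_I^n)\| \lesssim h^{m}\|u\|_{H^{m+2}(\cQ)}$ on the active mesh, which is of the required order and thus absorbed into the right-hand side of \eqref{FE_est1}. The main obstacle I anticipate is the bookkeeping around the stabilization and the gradient term when absorbing the forcing: the consistency bound \eqref{est:consist} carries a $\gamma_s^{1/2} s_h^n(v_h,v_h)^{1/2}$ factor rather than a $\|\nabla v_h\|$ factor, so I must be careful that the coefficient of the $s_h^n(e_h^n,e_h^n)$ term left on the left after absorption stays nonnegative, which dictates taking $c_\gamma$ (and hence $\gamma_s$) sufficiently large and $\Delta t$ sufficiently small — exactly the hypotheses of the theorem. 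A secondary subtlety is that the narrow-band estimate \eqref{est:nb} was derived for functions in $H^1(\cO_{t_0})$ with $\partial_t u = 0$, so I must verify it applies to the time-independent finite element function $e_h^{n-1}$ extended across the slab, which it does since $e_h^{n-1} \in V_h^{n-1} \subset H^1(\cO^{n-1})$.
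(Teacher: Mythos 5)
Your proposal is correct and follows essentially the same route as the paper's proof: test the error equation \eqref{e:err1} with $v_h = 2\Delta t\, e_h^n$, rerun the stability argument of Theorem~\ref{Th1} (narrow-band estimate \eqref{est:nb} plus Lemma~\ref{lem:gp}) on $e_h^{n-1}$, absorb the forcing via Young's inequality into the $\|e_h^n\|^2$, $\|\nabla e_h^n\|^2$ and $\gamma_s s_h^n(e_h^n,e_h^n)$ terms, sum, apply discrete Gronwall with $e_h^0=0$, and finish with the triangle inequality and interpolation estimates (including \eqref{eq:shnIw} for the $s_h^n(e^n,e^n)$ term). Your only slight misplacement is attributing the need for $c_\gamma$ large to the absorption of the forcing---the Young-inequality absorption of $\gamma_s^{1/2} s_h^n(e_h^n,e_h^n)^{1/2}$ works for any $\gamma_s>0$; the largeness of $c_\gamma$ is already consumed in the Theorem~\ref{Th1} step where the previous-step term $K(\eps+(1+\eps^{-1})h^2)\,s_h^n(e_h^{n-1},e_h^{n-1})$ must be dominated by $\gamma_s$.
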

\begin{proof}
	We set $v_h=2\Delta t e^n_h$ in \eqref{e:err1}. This gives
	\begin{equation*}
		\|e_h^n\|^2_{\Om{n}{}}- \|e_h^{n-1}\|^2_{\Om{n}{}} +\|e_h^n-e_h^{n-1}\|^2_{\Om{n}{}}+{2\Delta t}  a^n(e_h^n,e_h^n)+{2\Delta t}\gamma_s s^n_h(e_h^n,e_h^n)=2\Delta t\big(\interpol(e_h^n)+\consist(e_h^n)\big).
	\end{equation*}
	We repeat the arguments as in the proof of Theorem~\ref{Th1} for estimating the term $\|e_h^{n-1}\|^2_{\Om{n}{}}$ by $\|e_h^{n-1}\|^2_{\Om{n-1}{}}$.  This yields
	\begin{align}\label{aux1}
		&\norm{e_h^k}_{\Om{k}{}}^2 + \Delta t  \sum_{n=1}^k  \norm{\nabla e_h^n}_{\Om{n}{}}^2 + \Delta t \gamma_s \sum_{n=1}^k s_h^n(e_h^n,e_h^n) \\
		& \leq \norm{e^0_h}_{\Om{0}{}}^2 + \tilde C \Delta t \sum_{n=0}^{k-1} \| e^n_h \|_{\Om{n}{}}^2 + \Delta t \gamma_s  s_h^0(e_h^0,e_h^0) + \Delta t  \norm{\nabla e_h^0}_{\Om{0}{}}^2
		+ 2\Delta t  \sum_{n=1}^k(\interpol(e_h^n)+\consist(e_h^n)). \nonumber
	\end{align}
	To estimate the interpolation and consistency terms, we apply Young's inequality to the right-hand sides of \eqref{est:consist} and \eqref{est_inter} and thus obtain
	\begin{align*}
		2 \Delta t (\consist(e_h^n) + \interpol(e_h^n))&
		\le c\,\Delta t (\Delta t^2+ \gamma_s h^{2m}) R(u)
		+\frac{\Delta t}{2}
		\left(\|e_h^n\|_{\Om{n}{}}^2+\|\nabla e_h^n\|_{\Om{n}{}}^2 +\gamma_s s_h^n(e_h^n,e_h^n) \right),
	\end{align*}
	with a constant $c$ independent of $h$, $\Delta t$, $n$ and of the position of the domain in the background mesh. Substituting this in \eqref{aux1} and  noting $e^0_h=0$ in $\Odt{0}$ we get
	\[
		 \norm{e_h^k}_{\Om{k}{}}^2  + \tfrac12 \left(  \, \Delta t  \sum_{n=1}^k  \norm{\nabla e_h^n}_{\Om{n}{}}^2 + \gamma_s \Delta t \sum_{n=1}^k s_h^n(e_h^n,e_h^n) \right)
		 \leq C \Delta t \sum_{n=0}^{k} \| e^n \|_{\Om{n}{}}^2 +cR(u) (\Delta t^2+ \gamma_s h^{2m}).
	\]
	We apply the discrete Gronwall inequality to get
	\begin{align}
		\|e_h^k & \|^2_{\Om{k}{}}+\tfrac12 \Delta t \sum_{n=1}^{k} \left( \|\nabla e_h^n\|^2_{\Om{n}{}}
		+\gamma_s s_h^n(e_h^n,e_h^n)\right)
		\lesssim \exp(C t_k) R(u)(\Delta t^2+ \gamma_s h^{2m} ).
		\nonumber
	\end{align}
	Now the triangle inequality, interpolation estimates, \rev{and \eqref{eq:shnIw}} give
	\begin{equation*}
		\begin{split}
			\|\err^k\|^2_{\Om{k}{}}&+\tfrac12 \Delta t \sum_{n=1}^{k} \left( \|\nabla\err^n\|^2_{\Om{n}{}}
			+\gamma_s s_h^n(\err^n,\err^n)\right)
			\\ &
			\leq
			\exp(C t_k) R(u)(\Delta t^2+ \gamma_s h^{2m} ) +\|e^k\|^2_{\Om{k}{}}+\tfrac12 \Delta t\sum_{n=1}^{k} \left( \|\nabla e^n\|^2_{\Om{n}{}}
			+\gamma_s s_h^n(e^n,e^n)\right)
			\\ &
			\lesssim \exp(C t_k) R(u)(\Delta t^2+\gamma_s h^{2m} ) + \norm{u}_{H^{m+2}(\cQ)} \gamma_s h^{2m}.
		\end{split}
	\end{equation*}
	This completes the proof.
\end{proof}

\section{Numerical example}\label{s:numer}
We present numerical  results for a simple example of a two-dimensional  level set domain  with a topological 
change induced by a smooth level set function with an isolated critical point with $\frac{\partial \phi}{\partial t}(x_c,t_c) > 0$, cf. Section~\ref{newSectdomains}.  The results are obtained using  NGSolve/netgen, cf. \cite{ngsolve} with the add-on ngsxfem, cf. \cite{ngsxfem}. In the implementation we use a parametric finite element technique, cf. Remark~\ref{remimplementation}.  

The evolving domain $\Omega(t)$ is defined as the subzero level of the smooth level set function
\begin{equation}\label{phi}
\phi(x,t)=\phi(x_1,x_2,t) = (t - 0.25) - \big( (x_1^2-0.3 x_1^4) -x_2^2\big)\quad \text{for}~~t\in[0,\,0.5].
\end{equation}
This subzero level describes the evolution of  a smooth domain  that splits into two   domains at   $(x_c,t_c)=(\mathbf{0},0.25)$, which  is a nondegenerate critical point with  $\frac{\partial \phi}{\partial t}(x_c,t_c) > 0$. 
The given smooth solution to the heat equation in $\Omega(t)$  was set to be 
\[
u(x,t)= \phi(x,t)^2 + (x_1+x_2+1). 
\]
This solution satisfies  \eqref{transport} with  a non-zero right-hand side and non-zero flux on $\Gamma(t)$, which required obvious modifications of the method.

 In the finite element method we use a quasi-uniform triangulation of the bulk domain   $\wOm =(-2,2)\times (-1.5,1.5)$, obtained by making $L_x$ levels of uniform refinements of an initial triangulation with $h_0=0.5$.  
In all experiments the time step is taken as $\Delta t =0.1\,2^{-L_t}$, with a specified  $L_t=1,2,3,\dots$.

We report convergence results for the discretization method \eqref{e:unfFEM1}, i.e., with time discretization based on BDF1,   with linear ($m=1$) or quadratic ($m=2$) finite elements. We set $L_t=2L_x$. Results are presented in  Figure~\ref{BDF1}, where errors are reported in the time-discrete analogues of $L^\infty(L^{2})$ and  $L^2(H^1)$ norms.
\begin{figure}[ht!]\centering
	\includegraphics[width=0.42\textwidth]{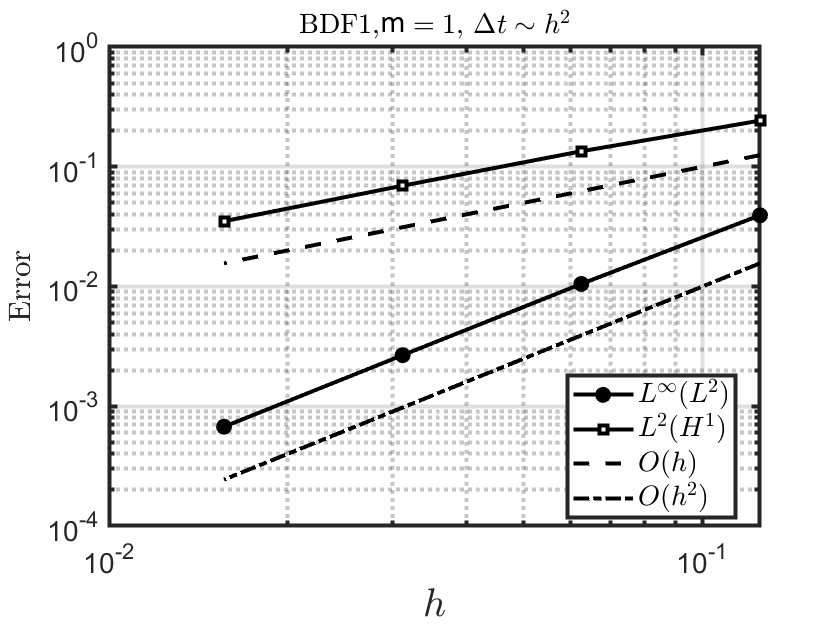}
	\includegraphics[width=0.42\textwidth]{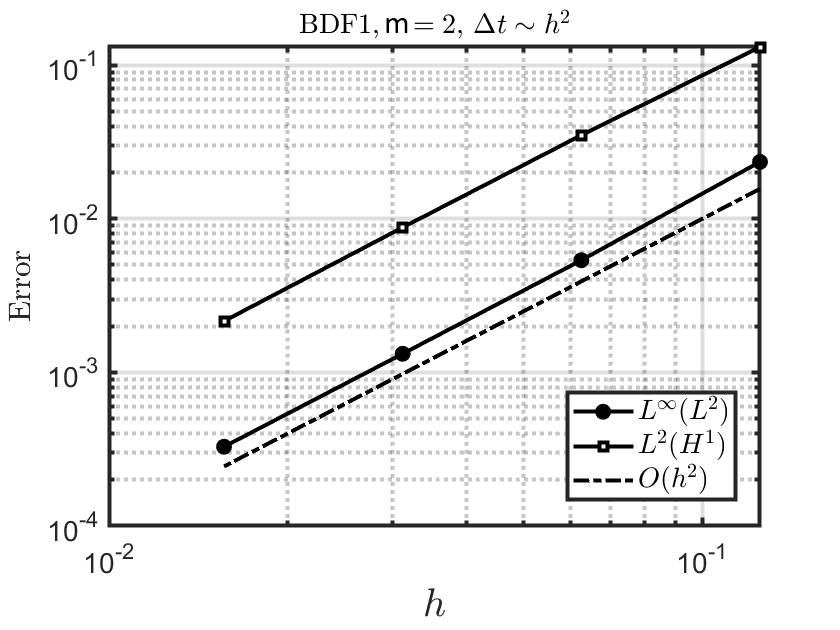}
\caption{\label{BDF1}BDF1 results for  $m=1$ (left panel) and $m=2$ (right panel).}
\end{figure}

The numerical results  clearly confirm the convergence order in the $L^2(H^1)$ norm predicted by Theorem~\ref{Th2}. Moreover, we see that we are regaining an extra convergence order in the $L^\infty(L^{2})$ norm.

With obvious modifications (cf. also \cite{lehrenfeld2019eulerian})  one obtains a BDF2 time stepping variant of the method \eqref{e:unfFEM1}. In this variant we also use  finite element spaces with $m=1$ and $m=2$, but now  set  $L_t=L_x$. Results are presented in Figure~\ref{BDF2}. 
\begin{figure}[ht!]
\centering
	\includegraphics[width=0.42\textwidth]{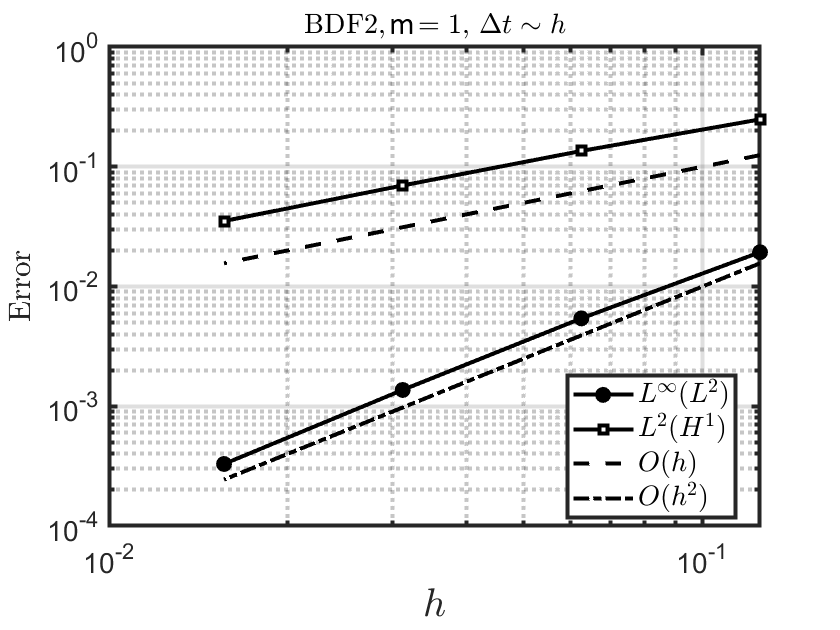}
	\includegraphics[width=0.42\textwidth]{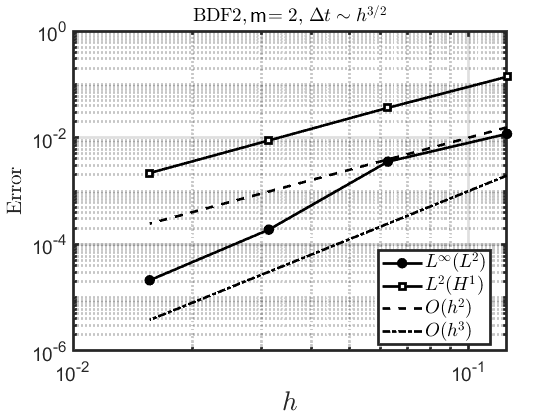}
	\caption{\label{BDF2} BDF2 results for FE orders $m=1$ (left panel) and $m=2$ (right panel).}
\end{figure}

For this BDF2 variant we also observe optimal order of convergence.

Figure~\ref{solution} visualizes $\Omega(t)$ for several time instances. The figure also shows the active numerical domains and the numerical solution extended to the numerical domain.

\begin{figure}[ht!]\label{solution}\centering
	\begin{minipage}{0.9\textwidth}\centering
	\includegraphics[width=0.29\textwidth]{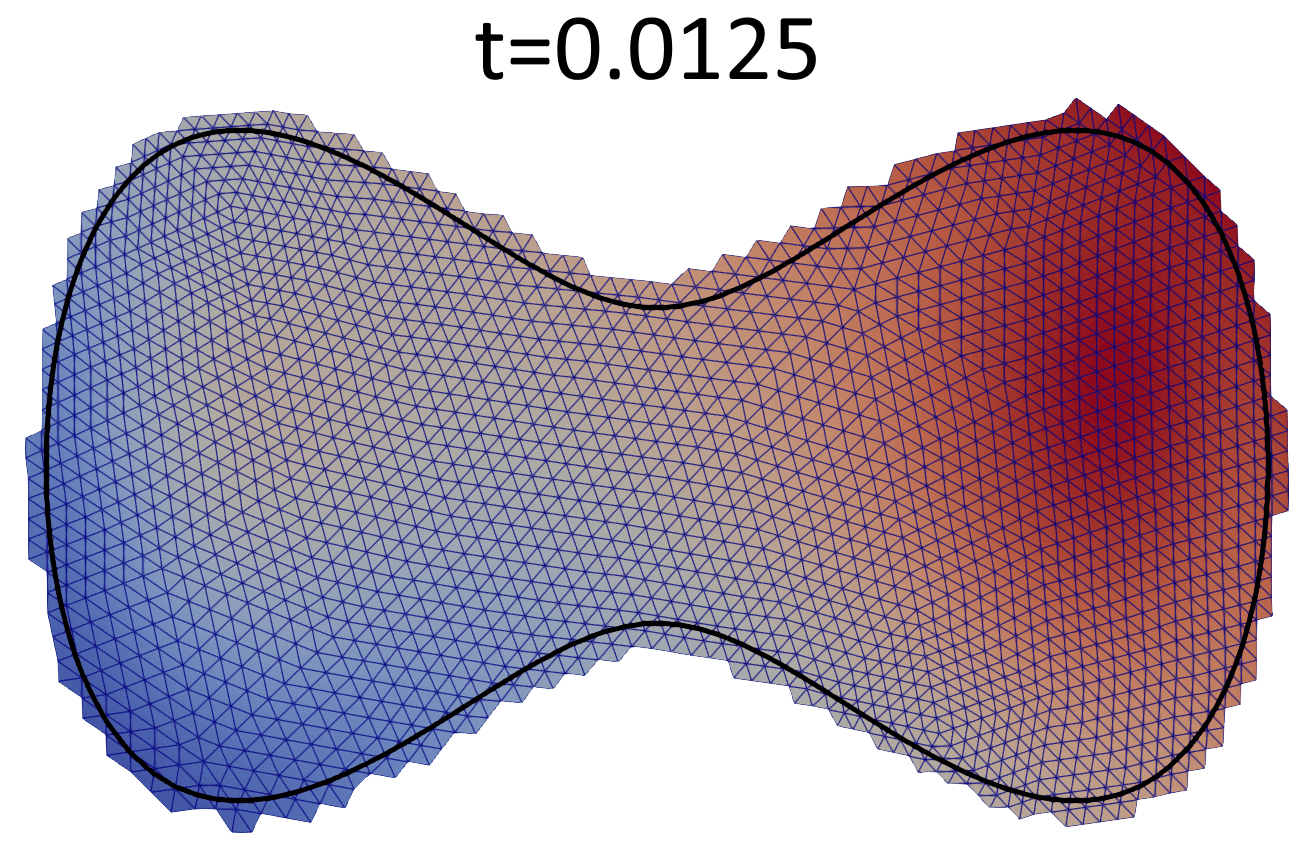}\quad	
	\includegraphics[width=0.31\textwidth]{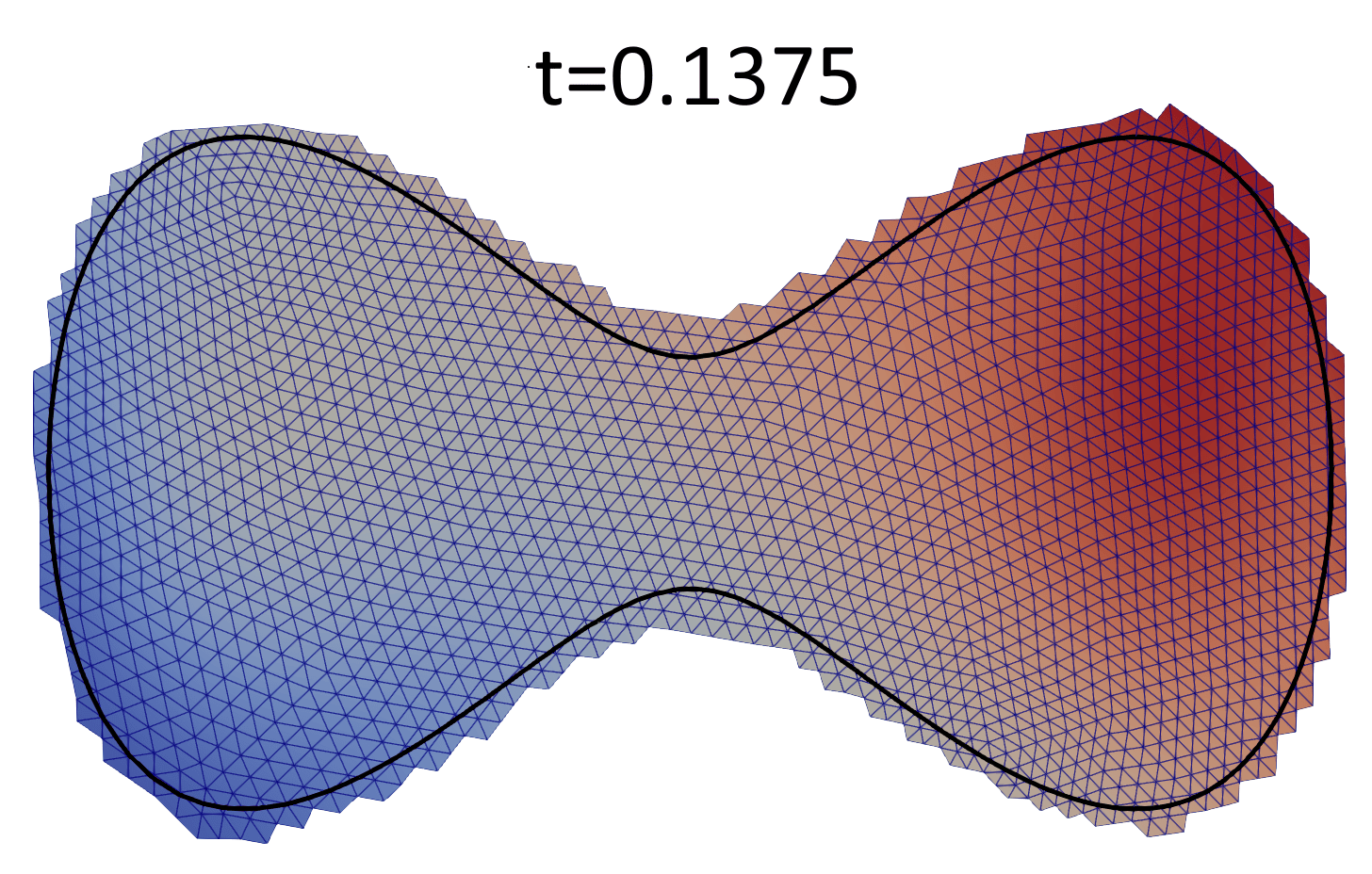}\quad
	\includegraphics[width=0.31\textwidth]{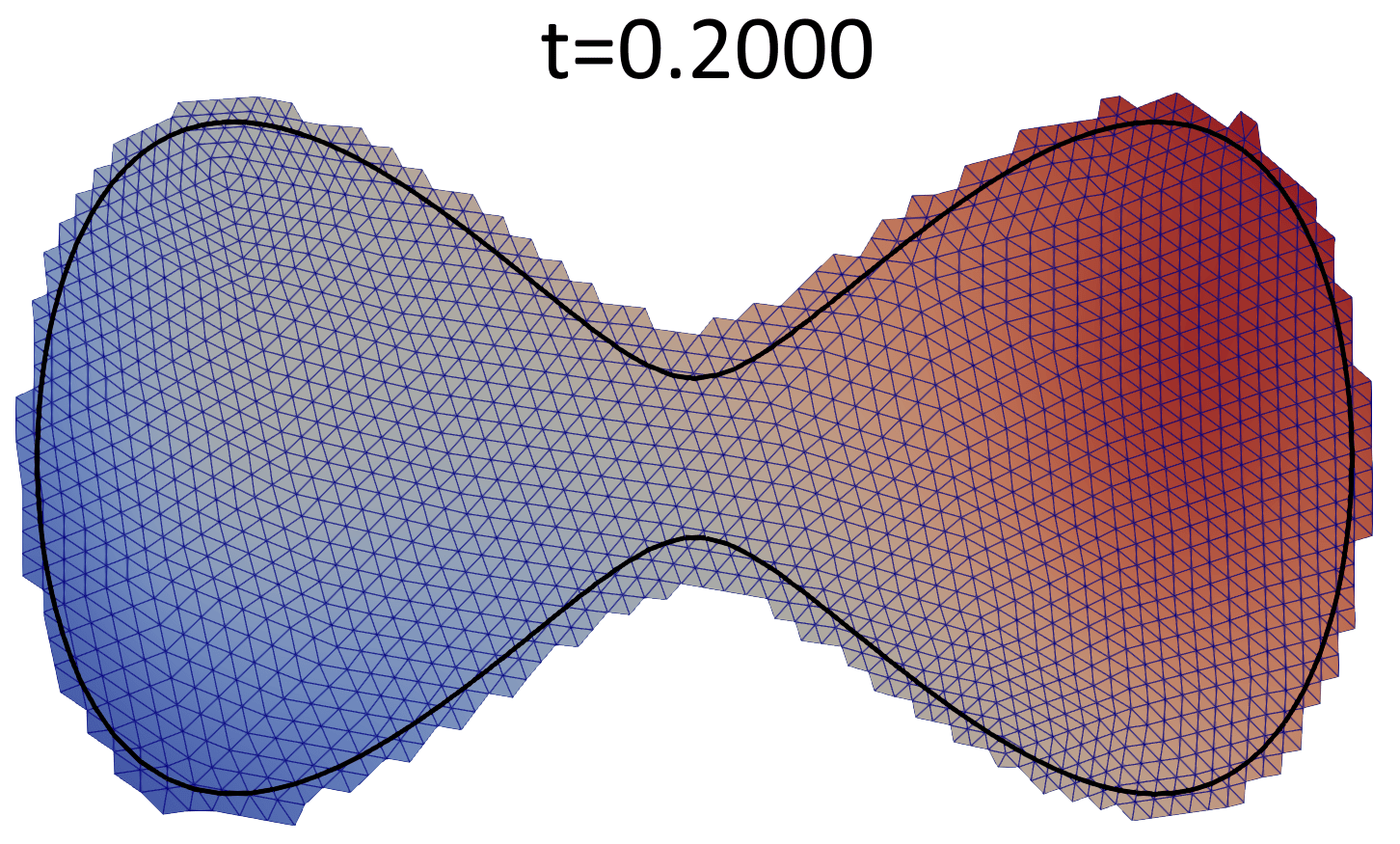}\\[2ex]
	\includegraphics[width=0.30\textwidth]{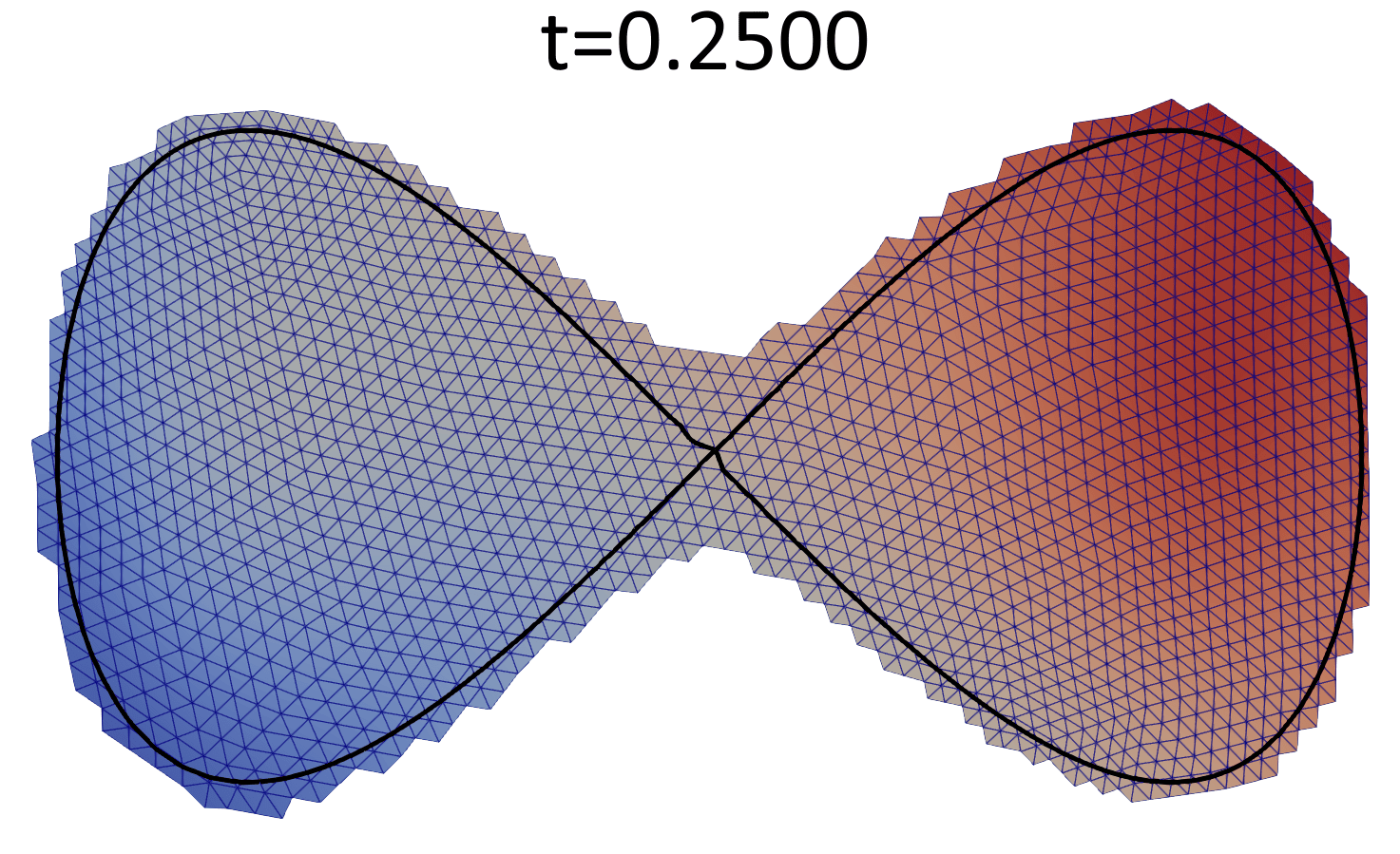}\quad
	\includegraphics[width=0.31\textwidth]{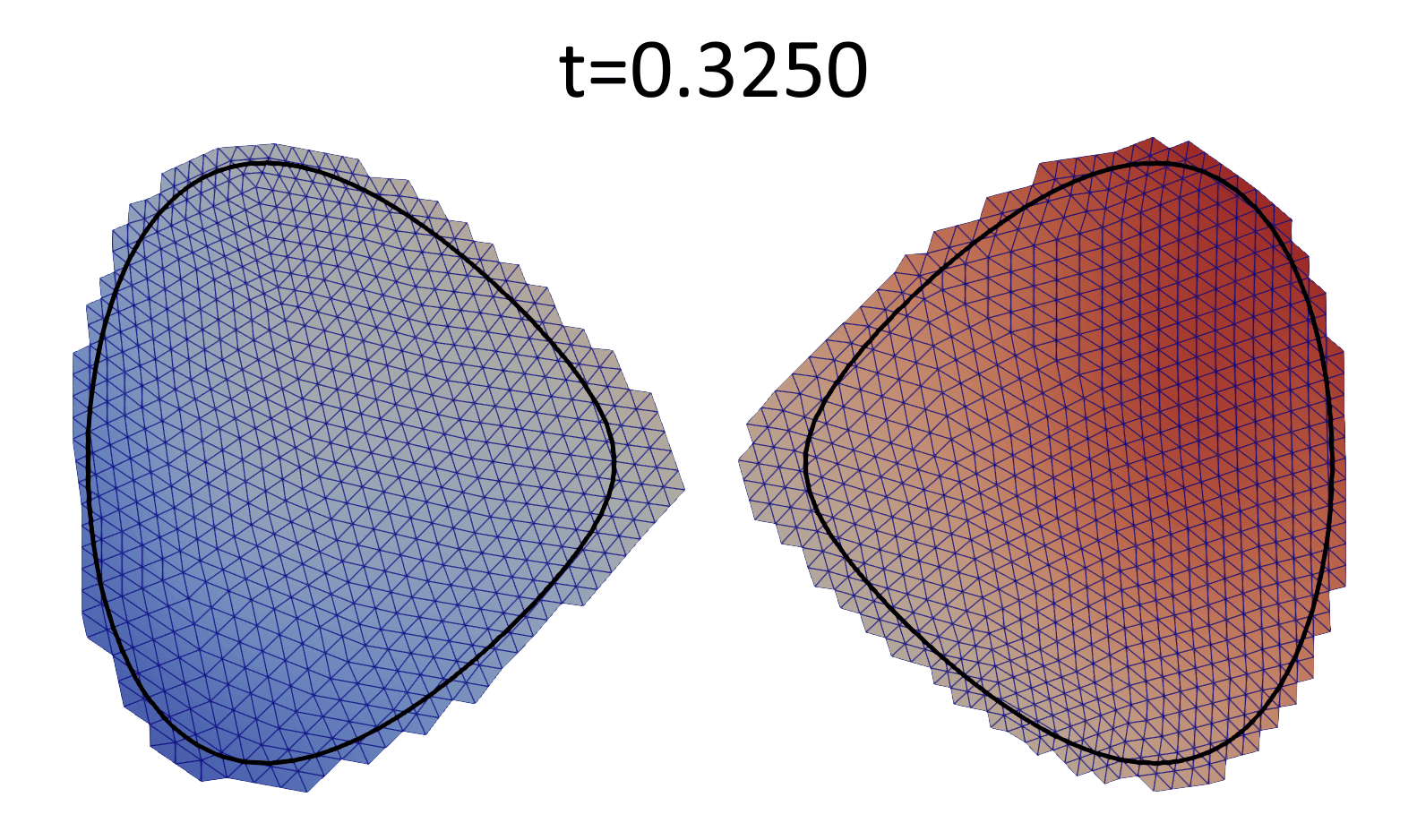}\quad
	\includegraphics[width=0.31\textwidth]{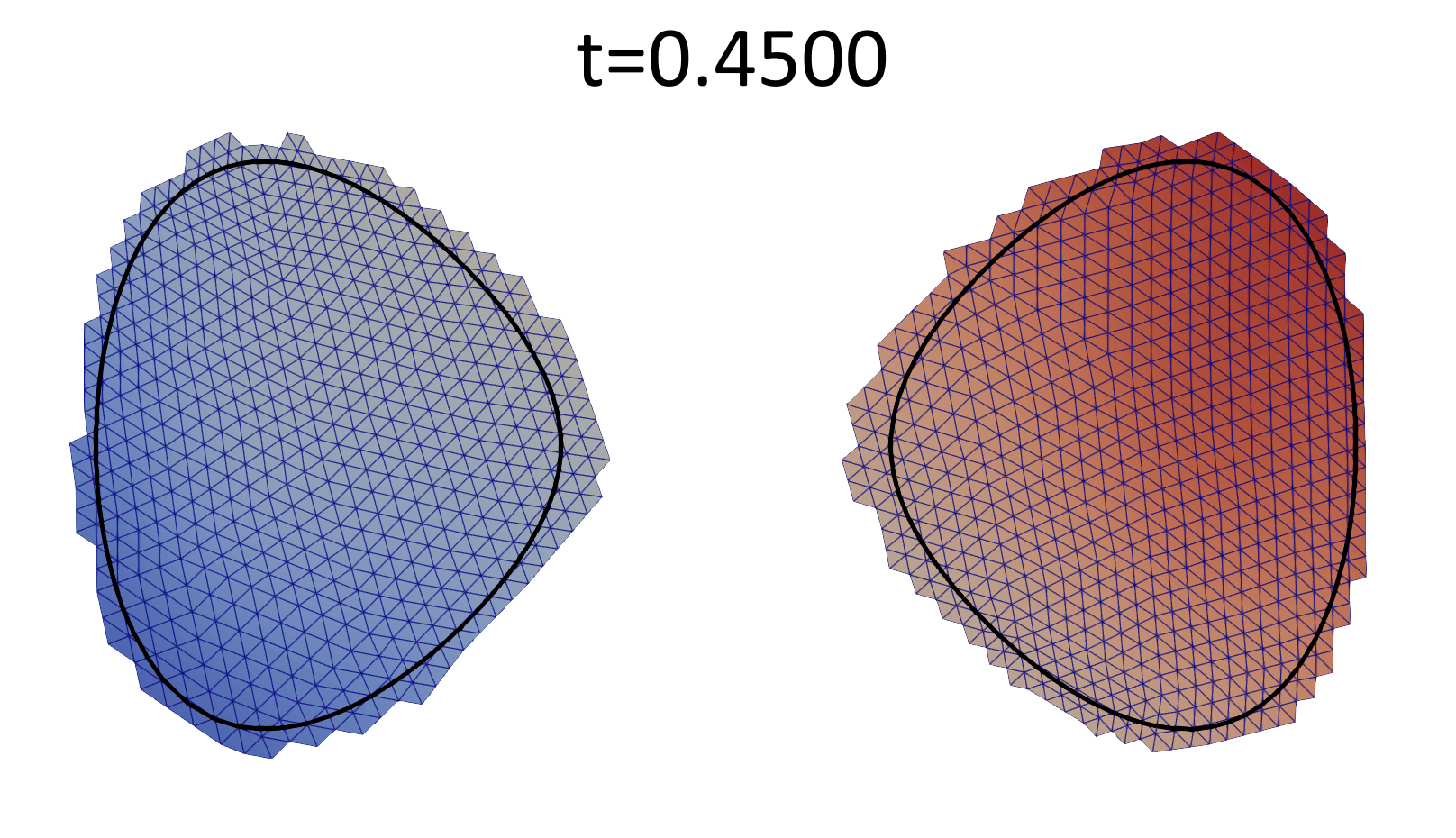}
	\end{minipage}
	\begin{minipage}{0.09\textwidth}
	\includegraphics[width=\textwidth]{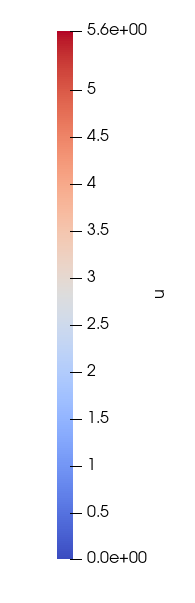}
	\end{minipage}
	\caption{Visualization of the splitting domain and the numerical solution with $L_x=3$. The boundary $\G(t)$ is shown by the black contour line.}
\end{figure}

We repeated the experiment with reversed time in the definition of $\phi$, i.e. $t-0.25$ was replaced by $0.25-t$ in \eqref{phi}. This  corresponds to the case of merging domains. 
Although this case is not covered by the available analysis, we observed convergence results very similar to those presented in Figures~\ref{BDF1}--\ref{BDF2}. An error analysis for such a case with merging level set domains is left for future research.

\section{Conclusions and outlook}
Topological transitions in 2D and 3D time-dependent domains can occur through various scenarios. In this paper, we identified specific scenarios that are amenable to rigorous stability and error analysis of an Eulerian unfitted finite element method for solving linear parabolic problems posed on such domains. To this end, we formulated several structural assumptions regarding topology changes and investigated when these assumptions hold or fail for a class of level set domains. A connection to Morse theory enabled us to identify certain admissible topological changes, such as domain splitting, island vanishing, and hole creation, as special cases.

The structural assumptions were primarily driven by  analysis requirements of the specific Eulerian unfitted finite element method considered in this paper. For this  method we derived an optimal order error bound. We  expect that the analysis of alternative computational methods, for example  those based on space–time variational formulations, may accommodate a different or broader class of topological changes. Nevertheless, the results presented here are the first of their kind and show that the rigorous numerical analysis of PDEs on domains undergoing topological transitions is an achievable goal.

Much work remains to be done concerning rigorous analysis of  well-posedness and regularity of solutions of PDEs posed on domains undergoing topology changes, particularly in relation to how the transition occurs. In this paper, certain regularity properties of the solution were assumed without proof.
Finally, a deeper understanding of how the transition scenarios considered in this work and other ones  correspond to real-world phenomena (e.g., cell fusion or the breakup of viscous fluid droplets) would be highly valuable.

\section*{Acknowledgment} The author Maxim Olshanskii was partially supported by National Science Foundation under the grant DMS-2408978. Arnold Reusken thanks the German Research Foundation (DFG) for financial support within the Research Unit ``Vector- and tensor valued surface PDEs'' (FOR 3013) with project no. RE 1461/11-2.
\bibliographystyle{siam}
\bibliography{literatur}{}

\appendix

\section{Stabilization form}	\label{A1}
\ \\ 
The computational domain (in time step $n$) is formed by the triangulation $\Td{n}$, cf.~\eqref{def5}. The stabilization is defined on a corresponding  boundary strip, defined as:
\begin{equation}
	\TS{n} :=\{ T \in \Td{n}\, : \, \dist(x,\Gamma^n) \leq \delta \text{ for some } x \in T \} .
\end{equation}
The boundary strip includes  elements intersected by the boundary of $\Omega^n$, but possibly also some elements that are completely inside or outside of $\Om{n}{}$. We define the set of facets  in $\Td{n}$ and $\TS{n}$:
\begin{equation} \label{e:deffhk}
	\Fh^n := \{ \overline{T_1} \cap \overline{T_2}\, :\, T_1 \in \Td{n},~ T_2 \in \TS{n}, T_1\neq T_2,~ \text{meas}_{d-1} (\overline{T_1} \cap \overline{T_2}) > 0 \}.
\end{equation}

For $F \in \Fh^n$ let $\omega_F$ be the facet patch, i.e. $\omega_F = T_1 \cup T_2$ for $T_1$ and $T_2$ as in the definition \eqref{e:deffhk}. We define for $u,v \in V_h^n$
\begin{equation}
	s_h^{n}(u,v) := \gamma_s\sum_{F \in \Fh^n} s_{h,F}^{n}(u,v) \quad \text{with} \quad s_{h,F}^{n}(u,v):= \frac{1}{h^2} \int_{\omega_F} (u_1 - u_2) (v_1 - v_2) dx,
\end{equation}
where $u_1 = \mathcal{E}^P  u|_{T_1}$, $u_2 = \mathcal{E}^P  u|_{T_2}$ (and similarly for $v_1$, $v_2$) where $\mathcal{E}^P: P_m(T) \rightarrow P_m(\R^d)$ is the canonical extension of a polynomial to $\R^d$. 
For the analysis, we also define $s_h^{n}(u,v)$ for arbitrary functions $u,v  \in L^2(\Odt{n})$. In this case, we set $u_1 = \mathcal{E}^P \Pi_{T_1} u|_{T_1}$, $u_2 = \mathcal{E}^P \Pi_{T_2} u|_{T_2}$
where $\Pi_{T_i}$ is the $L^2(T_i)$-orthogonal projection into $P_m(T_i),~i=1,2$. We notice that for $v \in V_h^n$, $\Pi_{T_i} v|_{T_i} = v|_{T_i}$.
The stabilization parameter is taken as
\begin{equation} \label{e:cgamma}
	\gamma_s = \gamma_s(h,\delta_h) = c_{\gamma} \, (1+\frac{\Delta t}{ h}) \ \text{ with } \ c_\gamma>0 \text{ independent of } \Delta t \text{ and } h.
\end{equation}
The choice of this scaling results from Theorem~\ref{Th1}.
\end{document}